\newtheorem{theorem}{Theorem}[section]
\newtheorem{corollary}{Corollary}[section]
\newtheorem{lemma}{Lemma}[section]
\newtheorem{proposition}{Proposition}[section]
\newtheorem{definition}{Definition}[section]
\newcommand{\ps}[1]
\author{{Yuhui Jin} \\
	Department of Mathematics\\
	California Institute of Technology\\
	Pasadena, CA 91106 \\
	\texttt{yjjin@caltech.edu} \\
	}
\date{}
\begin{document}
\title{On the Hecke Module of $\text{GL}_n(k[[z]])\backslash \text{GL}_n(k((z)))/\text{GL}_n(k((z^2)))$}

\maketitle 
\author{Yuhui Jin}

\begin{abstract}

Every double coset in $\text{GL}_m(k[[z]])\backslash \text{GL}_m(k((z)))/\text{GL}_m(k((z^2)))$ 
is uniquely represented by a block diagonal matrix with diagonal blocks in $\{1,z,
\begin{pmatrix}
1& z\\
0 &z^i \\

\end{pmatrix} (i>1)\}$ 
if $char(k) \neq 2$ and $k$ is a finite field. 
These cosets form a (spherical) Hecke module $\mathcal{H}(G,H,K)$ over the (spherical) Hecke algebra $\mathcal{H}(G,K)$ of double cosets in $K\backslash G/H$, where $K=\text{GL}_m(k[[z]])$ and $H=\text{GL}_m(k((z^2)))$ and $G=\text{GL}_m(k((z)))$. Similarly to Hall polynomial $h_{\lambda,\nu}^{\mu}$ from the Hecke algebra $\mathcal{H}(G,K)$, coefficients  $h_{\lambda,\nu}^{\mu}$ arise from the Hecke module. We will provide a closed formula for $h_{\lambda,\nu}^\mu$, under some restrictions over ${\lambda,\nu,\mu}$. 
\end{abstract}

\tableofcontents

\section{Introduction}
\subsection{Symmetric Elliptic Difference Equation}
In \cite{rains2013generalized, rains2019birational,rains2016noncommutative}, Rains studies a general elliptic difference equation of the form $v(q+z)=A(z)v(z)$ where $q$ is a point of an elliptic curve and $A(z)$ is a matrix of elliptic functions with $\det(A(z))$ not identically 0. Such equations arise, for instance, in studying elliptic analogues of ordinary and $q$-hypergeometric special functions. At the elliptic level, these functions tend to satisfy an additional symmetry of the form $v(z)=v(-z)$, which leads the equation itself to satisfy an additional symmetry, namely that $A(-q-z)=A(z)^{-1}$. To understand symmetries of such equations, it is important to understand how they behave {\em locally}, i.e., over the ring of formal power series. At a typical point $z_0$, the equation is regular if and only if $A$ and $A^{-1}$ are both holomorphic at $z_0$, since then the spaces of solutions near $z_0$ and near $q+z_0$ are naturally isomorphic. However, for symmetric equations, we care about the space of {\em symmetric} solutions, which means that the condition is more subtle. (For instance, the equation $v(q+z)=-v(z)$ is symmetric, but any holomorphic solution satisfying $v(-z)=v(z)$ will vanish at points with $2z=-q$.) We thus see that to understand local singularities in complete generality, we need to better understand matrices $A$ satisfying this symmetry. Two such matrices give the same local behavior at $z_0$ (satisfying $2z_0=-q$) if and only if they are related by an invertibly holomorphic change of basis at $z_0$, and thus we need to understand such matrices up to (twisted) conjugation by invertibly holomorphic matrices.

Instead of the difference equation, it is natural to classify the matrices $A$. 
We can rephrase the problem in an abstractly geometric way. Given an elliptic curve $C_\alpha$ over algebraically closed field $k$ and a translation $\tau_q: C_{\alpha}\rightarrow C_{\alpha}$ and $\eta:z\to -q-z$, 
the problem of classifying $A$ becomes classifying matrices $\text{GL}_n(k(C_{\alpha}))$ such that 
$\eta^*A=A^{-1}$. By Hilbert's theorem 90, we have the following proposition.
\begin{proposition}

\cite[Proposition~2.1]{rains2013generalized}
Let $L/K$ be a quadratic field extension, and let $A\in \text{GL}_n(L)$ be a matrix such that $\bar{A}=A^{-1}$, where $\bar{.}$ is the conjugation of $L$ over $k$. Then there exists a matrix $B\in GL_n(L)$ such that $A=\bar{B} B^{-1}$ and $B$ is unique up to right multiplication by $\text{GL}_n(K)$.
\end{proposition}

Thus, there exist a canonical factorization $A=\eta^*B^{-t} B^{t}$ where $B$ is an injective morphism $B: \pi_{\eta}^* V\rightarrow \mathcal{O}_{C_\alpha}^n$, with $V$ a rank n vector bundle over $\mathbb{P}^1$, the projective space. Take any point in $p\in C_\alpha$, $B$ is a matrix over the local ring. 
In \cite[Chapter~8]{rains2013generalized} , Rains first mentioned the following without proof: if $p$ is fixed by $\eta $, the invariants $\lambda(B;p)$ is determined by the equivalence relation of left multiplication by invertible matrices over the local ring and right multiplication by symmetric matrices over the local field. That is, $B$ is a direct sum of matrices $1,z$, and $\begin{pmatrix}
1& z\\
0 &z^i \\

\end{pmatrix} (i>1)$. In Chapter 2, we will give a proof in a different way from how Rains observed it.

In \cite[Chapter~13.2]{rains2016noncommutative}, Rains constructs the solution sheaf with respect to a vector bundle over $C_\alpha$, a system of local condiitions, and $q$-connections over $A$ for an elliptic difference equation and uses the dominant coweight to study the multiplication of $A(q^{m-1}z)A(q^{m-2}z)...A(z)$, in other word, the interactions of poles and zeros for shift matrices $A$. In the symmetric case, we will study the interactions of $A$ and $B$ in terms of poles and zeros.

\subsection{Hecke Algebra and Hall Polynomials}
Let $k$ be a finite field of $q$ elements with characteristic $char(k) \ne 2$. Let  $k[[z]]$ be the ring of formal power series. Let $k((z))$ be the the field of fractions of the ring $k[[z]]$  of formal power series. Let $G=\text{GL}_n(k((z)))$ and let $K=\text{GL}_n(k[[z]])$ be the maximal compact subgroup of $G$. Then $G/K$ is the affine Grassmaniann.
\begin{theorem}
\cite[Chapter~9]{bump} 
\label{cartan}
Every double coset in $K\backslash G/K$ has a unique representation of the form diag $(z^{\lambda_1}$
$,z^{\lambda_2}$
$,...,z^{\lambda_n})$, where $\lambda_1\geq \lambda_2\geq ...\geq \lambda_n.$ We define the partition $(\lambda_1,\lambda_2,...\lambda_n)$ to be the dominant coweight of all elements in the corresponding double coset and denote it by $\rho$.

\end{theorem}

This theorem is known as the \emph{Cartan decomposition}. 
\begin{definition}
For $g\in G$, define gcd(g) to be the fractional ideal of $k[[z]]$ generated by the entries in $g$. Let $\wedge^a$ be the a-th exterior power representation; the matrix entries of $\wedge^a(g)$ are the a by a minors of $g$. Then $gcd(\wedge^a g )$ is the fractional ideal generated by these minors.
\end{definition}
The fractional ideal $gcd(\wedge^ag)$ is invariant under left and right multiplication by $K$.
In the proof of Theorem~\ref{cartan}, we have the following statement.

For any $g\in G$ with $\rho(g)=(\lambda_1,\lambda_2,...\lambda_n)$, we have $gcd(g)=z^\lambda_n\text{ and } gcd(\wedge^i g)=z^{\sum_{j=1}^{i} \lambda_{n-j+1}}$ for all i.

The (spherical) Hecke algebra $\mathcal{H}(G,K)$ is the convolution algebra of all complex-valued continuous compactly supported K-bi-invariant functions on G. Let $c_\lambda$ be the characteristic function of $Kz^\lambda K$. The Hecke algebra $\mathcal{H}(G,K)$ has a basis given by $\{c_\lambda\}_{l(\lambda)= n}$. In fact $c_{\mu}*c_{\nu}=\sum_{\lambda} g_{\mu\nu}^{\lambda}(q)c_\lambda$, where $ g_{\mu\nu}^{\lambda}(q)$ is the Hall polynomial. 
\begin{proposition}
\label{satake}
 \cite[Proposition~37]{bump} (special case of Satake Isomorphism)

Let $\theta_r=q^{-r(n-r)/2}c_{1^r}$ for all r. The ring structure of $\mathcal{H}(G,K)$ is a polynomial ring over $\theta_1,\theta_2,...,\theta_n,\theta_{n}^{-1}.$
\[\mathcal{H}(G,K)\cong \mathbbm{C}[\theta_1,\theta_2,...,\theta_n,\theta_{n}^{-1}]\]

\end{proposition}

\begin{proposition}
\label{HL}
\cite[Chapter~5, Theorem~2.7]{macdonald1998symmetric}

Let $G^+=G\cap M_n(k[[z]]).$ Then, naturally we have $\mathcal{H}(G,K)=\mathcal{H}(G^+,K)[c^{-1}_{1^n}].$
There is a $\mathbbm{C}$-algebra isomorphism  from $\mathcal{H}(G^+,K)$ to  the ring of symmetric polynomials in n variables with coefficients in $\mathbbm{C}[q^{-1}]$, i.e., $f\colon c_{\lambda}\to q^{-n(\lambda)}P_\lambda(x_1,...x_n;q^{-1}).$ Here $P_\lambda$ is the Hall-Littlewood polynomial.
\end{proposition}

In 2007, Rains and Vazirani \cite{rains2007vanishing} developed affine Hecke algebra techniques to prove results in terms of vanishing integrals of Macdonald and Koornwinder polynomials.
However, at q = 0 (the Hall–Littlewood level), these approaches do not work, although one can obtain the results by
taking the appropriate limit. In \cite{venkateswaran2012vanishing}, Venkateswaran developed a p-adic representation
theory approach dealing with this special case.

The Hall polynomial was historically studied by Hall, which is an important building block in the theory of special functions.  MacDonald \cite[Chapter~2]{macdonald1998symmetric} provides an extensive introduction of the Hall polynomial. Since the actual explicit formula is lengthy and involves LR-sequence of type $(\mu',\nu';\lambda')$, we will only mention some results that are used in this thesis:
\begin{itemize}
\item
$g_{(1^r)\nu}^{\lambda}(q)=0$ unless $\lambda-\nu$ is a vertical stripe of length r. (Pieri rule)
\item
$g_{(r)\nu}^{\lambda}(q)=0$ unless $\lambda-\nu$ is a horizontal stripe of length r. (Dual Pieri rule)

\end{itemize}

The product $P_{\nu}P_{\mu}$ of Hall-Littlewood polynomials is a linear combination of the $P_{\lambda}$'s for all $\lambda$'s with $|\lambda|=|\nu|+|\mu|: P_{\nu}(x;t)P_{\mu}(x;t)=\sum_{\lambda}g_{\mu\nu}^{\lambda}(t)P_{\lambda}(x;t)$.

\subsection{Outline of the Paper}
In the second chapter, we give a proof of the decomposition of $K\backslash G/H$, where $G =\text{GL}_n(k((z))), H =\text{GL}_n(k((z^2))), \text{ and } K = \text{GL}_n(k[[z]]),$ and the Pieri rule as well as the dual Pieri rule of the interaction between dominant coweight and symmetric coweight. In the third chapter, a Hecke module will be defined and we will compute the structural constant $h_{-1^r \nu}^\lambda$.  In the fourth chapter we will compute the structural constant $h_{-r\nu}^{\lambda}$.

\subsection{Future Directions}
Several questions can be raised as future directions. In terms of  double coset structure $K\backslash G/H$, if we replace the maximal compact subgroup K with an Iwahori subgroup, i.e. a conjugate of the inverse map of the subgroup of all upper triangular matrices in $\text{GL}_n(k)$, are there any good coset representatives and a nice Hecke module that could be possibly extended to other types?  In Chapter 3, we provide a formula for $h_{-1^r \nu}^\lambda$. Thus, a natural question to ask is there a pure combinatorial approach to compute such $h_{-1^r \nu}^\lambda$, since in this part, the building block of $h_{-1^r a^n}^\lambda$ is obtained from induction and the simplicity of the formula itself shines a light on a possible purely combinatorial proof. Furthermore, what is the complete formula for $h_{\mu,\nu}^\lambda$ and, if possible, is there any construction we can make to give a one-to-one correspondence from the Hecke module structure to the ring of polynomials (similarly as the correspondence from Hall-Littlewood polynomials to the Hecke algebra)? 
\subsection{Notation}
Recall the $q$-integer $[n]=\frac{q^n-1}{q-1}$ and $q$-factorial $[n]!=[n][n-1]....[1].$ The $q$-binomial is defined as
$ \binom{n}{k}=\frac{[n]!}{[n-k]![k]!}$ and  
the $q$-multinomial is defined as \[\binom{n}{ a_1,a_2,...a_i}=\frac{[n]!}{[a_1]![a_2]!...[a_i]![n-a_1-a_2-...-a_i]!}.\] 
For the simplicity of notation in Chapters 3 and 4, we denote $\binom{n}{ a_1,a_2,...a_i}=0$  if any $a_j$ is smaller than 0 and same for the $q$-binomial. In this thesis, ALL terms in the form of $\binom{n}{k},\binom{n}{ a_1,a_2,...a_i}$ are $q$-binomial or $q$-multinomial. 

In this thesis, we will use English notation for Young Diagram.

Notation on partitions: A \emph{partition} is a sequence $\lambda=(\lambda_1,\lambda_2,...)$ of integers in decreasing order (we allow negative integers) and containing finitely many nonzero terms. For the sake of simplicity, we may neglect the string of zeros when writing explicitly the sequence. The definition of length for $\lambda$ is also different from the tradition: Here $l(\lambda)$ is the number of all terms in $\lambda$. Furthermore, we define $m_i(\lambda)=|\{j:\lambda_j=i\}|$, and $n(\lambda)=\sum_{i>0} (i-1) \lambda_i$. The partition $\lambda'$ is the conjugate of $\lambda$. 

Notation for matrices: Let $\mathbbm{1}_{(i,j)}(f)$ with $i\ne j$ be a transvectional matrix: All entries in diagonal are $ 1$ and the only nonzero off diagonal term are at row $i$ and column $j$ with entry $f$. Let $\mathbbm{1}_{(i,i)}(f)$ be the diagonal matrix: All entries in diagonal except for row $i$ is $1$ and the $(i,i)$-entry is $f$.

Notation for Laurent series: Since we require the base field to be finite, $k((z))$ is a local field. Given $f=\sum_{i\geq r} a_i z^i$ with $a_r\neq 0$, the valuation of $f$, $v(f)$, is $r$. Let $f_{odd}$ be the odd part of $f $ and $f_{even}$  be the even part of $f$, that is, $f_{even}+f_{odd}=f$ and $f_{even},f_{odd}/z\in k((z^2)).$

\section{Symmetric Coweight}
\subsection{Decomposition}

\begin{theorem}
\label{start_decomp}
Every double coset in $\text{GL}_m(k[[z]])\backslash \text{GL}_m(k((z)))/\text{GL}_m(k((z^2)))$ is uniquely represented by block diagonal matrix whose diagonal blocks are  $1$, $z$, or $
\begin{pmatrix}
1& z\\
0 &z^i \\

\end{pmatrix}$, where  $i>1$ and the base field k is finite and has $char(k)\ne 2$ .

\end{theorem}

\begin{proof}

By the Iwasawa decomposition, for any $g\in \text{GL}_m(k((z)))$, there exists $h\in \text{GL}_m(k[[z]]))$ such that $gh$ is upper triangular.

Before we start the actual proof of the theorem, we introduce 4 tricks that are used in the actual proof. We call left multiplication by matrices in $\text{GL}_m(k[[z]])$ and right multiplication by matrices in $\text{GL}_m(k((z^2)))$ allowed operations .

Trick 1: A submatrix of the form $ \begin{pmatrix}
z^b &f_e z^a\\
0  &f_1 \\

\end{pmatrix}$ (the column with element $z^b$ only has one nonzero entry and $f_e\in k((z^2))$ , $v(f_e)=0$) can be reduced to $\begin{pmatrix}
z^b &z^a\\
0  &f_1 \\

\end{pmatrix}$  with allowed operations. 
We left multiply by $\mathbbm{1}_{(1,1)}(1/f_e)$,  right multiply by $\mathbbm{1}_{(1,1)}(f_e)$.
Note that this may change the other entries in the same row as the top row of the submatrix.

Trick 2: If $d\leq a\text{ and for the two pairs } \{a,d\} \text{ and }\{b,c\} $, elements in each pair are of different parities, a submatrix of the form $\begin{pmatrix}
z^b & 0& z^a\\
0  & z^c&z^d \\

\end{pmatrix}$  (the column with  $z^b$ only has one nonzero entry) can be reduced to $\begin{pmatrix}
z^b & 0& 0\\
0  & z^c&z^d \\

\end{pmatrix}$   with allowed operations. 
We left multiply by $\mathbbm{1}_{(1,2)}(-z^{a-d})$, right multiply by $\mathbbm{1}_{(1,2)}(z^{a-d+c-b})$.
Note that this may change the other entries in the same row as the top row of the submatrix.

Trick 3:
A submatrix of the form $\begin{pmatrix}
1 &f\\
0  &h \\
\end{pmatrix}$ (the column with the 1 only has one nonzero entry) can be reduced to  $\begin{pmatrix}
1 & z^{v(f_{\text{odd}})}\\
0  &h \\
\end{pmatrix}$ or $\begin{pmatrix}
1 &0\\
0  &h \\
\end{pmatrix}$ where ${0,v(f_{\text{odd}})}$ are of different parity  with allowed operations. Denote $f=f_{\text{odd}}+f_{\text{even}}$.
We first right multiply by  $\mathbbm{1}_{(1,2)}(-f_{\text{even}})$. If $f_{\text{odd}}=0$, then the second submatrix is achieved. Otherwise, by Trick 1, $f_{\text{odd}}$ reduces to $z^{v(f_{\text{odd}})}$. Note that this may change the other entries in the same row as the top row of the submatrix. If the (1,1) term is $z$, the trick is similar.

Trick 4: 
A submatrix of the form $\begin{pmatrix}
1 &  z& f\\
0  & z^a&g \\

\end{pmatrix}$  (two columns do not have other nonzero entries in the matrix) can be reduced to $\begin{pmatrix}
1 &  z& 0\\
0  & z^a&g-f_{\text{odd}}z^{a-1} \\

\end{pmatrix}$ with allowed operations.
We right multiply by $\mathbbm{1}_{(1,3)}(-f_{\text{even}})$$\mathbbm{1}_{(2,3)}(-f_{\text{odd}}/z)$.

Given the above reductions, the proof proceeds by induction on $m$.  Suppose that the claim holds for matrices of smaller dimension. In particular, the claim applies to the upper left $m-1\times m-1$ submatrix of $X$, which allows us to assume without loss of generality that submatrix is block diagonal with blocks of the desired form.

\begin{comment}

We prove the statement: 
Take $l\leq n,$ for any $Y$ upper triangular, there exists $ G' \in \text{GL}_n(k[[z]]), H \in \text{GL}_n(k((z^2)))$ where $G'_{i,i}=1,H_{i,i}=1$ for $i>l$ and $G'_{i,j}=H_{i,j}=0$ for $i\neq j, i>l \text{ or } j>l$, such that $G'YH$ is still upper triangular, and the first $l$ by $l$ submatrix of $G'YH$ is blockwise diagonal.

Now, let's induct on l=m. We restrict the allowed operations on the first m by m submatrix. Notice these operations correspond to restrictions on $G',H$  we defined on the former statement.
For any X, we first apply the statement of l=m-1 on the first $(m-1),(m-1)$ submatrix to decompose it into blocks. 
\end{comment}
We use trick 3,4 to modify entries in column $m$ of $X$: $X_{i,m}\text { is in the form of }z^a$  if $X_{i,i}$ corresponds to one-dimensional blocks (trick 3). $X_{i,m}=0$ if $X_{i,i}$ corresponds to the first row of a two dimensional part (trick 4). If there exists $ p $ such that $v(X_{p,m}) \geq v(X_{m,m})$  and $X_{p,m}$ is nonzero, $X_{p,m}$ can be reduced to 0 by row operations on row $m$ and $p$. 
Notice that to prove the inductive statement, it suffices to reduce one nonzero entry $X_{i,m}$ to be 0 or completely separate one block from the matrix. (Then by induction, matrices of smaller dimension are block diagonalizable.) Therefore, one more assumption on valuations of column m entries can be added: $X_{m,m}$ has the largest valuation among all nonzero elements in column $m$.  We permute rows and columns in the first $m-1$ by $m$ submatrix such that 
for all nonzero $X_{i,m}$,  $v(X_{i,m})$ is in increasing order and the first $m-1$ by $m-1$ submatrix is still block diagonal. There will be two cases according to the permutation: Case 1 is the first block in a two-dimensional block (i.e., an entry in column m with the least valuation corresponds to a 2-dimensional block); Case 2 is the first block in a one-dimensional block (i.e., an entry in column $m$ with the least valuation corresponds to a one-dimensional block).

Case 1: A nonzero column $m$ entry with the least valuation corresponds to a two-dimensional block. We will show that  allowed operations can make a one-dimensional block completely decomposed from the matrix.

The following is a submatrix of $\{1,2,m\}$ rows and columns: 
$\begin{pmatrix}
1 & z & 0  \\
0 & z^a &   z^{i_1}f_{1}\\
0 & 0   &  z^b  \\
\end{pmatrix}.$

Step 1: Make the second entry of column $m$ to be the only nonzero entry by row operations:
$
\begin{pmatrix}
1 & z  & 0  \\
0 & z^a & z^{i_1}f_{1}\\
0 & -z^{b+a-i_1}f^{-1}_{1}    & 0  \\
\end{pmatrix}.$

Step 2: Multiply the second row by $f_{1}^{-1}$ and make the second entry of column two in the form $z^xf_e, \text{ where }f_e\in k((z^2))$  by column operations (column m only has one nonzero element).

$\begin{pmatrix}
1 & z  & 0  \\
0 & z^af^{-1}_{1}   & z^{i_1}\\
0 & -z^{a+b-i_1}f^{-1}_{1}  & 0  \\
\end{pmatrix}
\rightarrow
\begin{pmatrix}
1 & z   & 0  \\
0 & z^{a'}f_{e3}   & z^{i_1}\\
0 & -z^{a+b-i_1}f^{-1}_{1}  & 0  \\
\end{pmatrix},$

$\text{where }  z^af^{-1}_{1}= z^{i_1}f_{e2} + z^{a'} f_{e3} \text{ and }\{a',i_1\}\text{ have different parity},\text{ and } f_{e3},f_{e2}\in k((z^2))$, $v(f_{e3})=0.$ If $f_{e3}=0$, $z^{i_1}$ is a one-dimensional  block in the submatrix.

Step 3: Turn $X_{2,2}$ to the form $z^x$. (We multiply the second column by $f^{-1}_{e3}$, first row by $f_{e3}$, first column by $f^{-1}_{e3}$). 

$
\begin{pmatrix}
1 & z  & 0  \\
0 & z^{a'}  & z^{i_1}\\
0 & -z^{a+b-i_1}f^{-1}_{1}f^{-1}_{e3}   & 0  \\

\end{pmatrix}$

Step 4: Make the only nonzero entry in the second row be $X_{2,m}$.

$ 
\begin{pmatrix}
1 & z  & 0  \\
z^{a'-1}  &0  & z^{i_1}\\
0 & -z^{a+b-i_1}f^{-1}_{1}f^{-1}_{e3}    & 0  \\

\end{pmatrix} \rightarrow 
\begin{pmatrix}
1 & z   & 0  \\
0 &0 & z^{i_1}\\
0 & -z^{a+b-i_1}f^{-1}_{1}f^{-1}_{e3}  & 0  \\
\end{pmatrix}$ 

$\text{From the inequality of } a'\geq a\geq 2 \text{ and the fact that } \{a'-1, i_1\}\text{ are of same parity}$, trick 2 removes element $z^{a'-1}$.
Thus, $z^{i_1}$ is a one-dimensional block. We can reduce it to either 1 or $z$.

Case 2: A nonzero column $m$ entry with the least valuation corresponds to a one-dimensional block. 
If there exists at least 2 one-dimensional blocks in the $m-1$ by $m-1$ submatrix, assume that one of them is in the $\ell$-th row. Then $X_{\ell,m}$ reduces to 0 by trick 2. Therefore, $X_{\ell,\ell}$ is the only nonzero entry in column or row $\ell$ in the matrix. Thus, for this case, it remains to show that if all blocks from row 2 to $m-1$ are 2-dimensional, we can still split one block out of the matrix. (If there is no two-dimensional block, which is equivalent to m=2, the 2 by 2 matrix is a block.)

We will have two major steps for this part:

1. We simplify our matrix into the following form: 
on column $m$ of $X$, $X_{2i,m}=0$ ($X_{2i,2i}$ corresponds to the first row of a two-dimensional block), $X_{2i+1,m}$ is in the form of $z^a$ ( $X_{2i+1,2i+1}$ corresponds to the second row of a two-dimensional block), and $X_{1,m}$ is still in the form of $z^a$.

$\begin{pmatrix}
z^{a_{1,1}} &  0 &0 &0&\cdots &z^{a_{1,2}}  \\
0 &z^{a_{2,1}}&z^{a_{2,1}} &0 & \cdots &0\\
0 &0&z^{a_{3,1}}&0 & \cdots &z^{a_{3,2}}\\
\cdots\\
0 &0&0&0 & \cdots &z^{a_{n,1}}\\
\end{pmatrix}$

2. We completely separate one block from the matrix.

Part 1: Step 1:
Write $X_{3,m}=z^af_1$. If $f_1\in k[[z^2]]$, then, by trick $1$, we have  $X_{3,m}\text{ go to }z^a$. If $f_1\notin k[[z^2]]$, multiplying $f_1^{-1}$ to the third row will take $X_{3,m}$ to $z^a$ and $X_{3,3}\text{ to }z^kf_1^{-1}$. (The following matrix illustration is the submatrix of $\{1,2,3,m\}$ rows and columns.)

$
\begin{pmatrix}
z^w & 0 &0& z^{i}  \\
0  & 1&z   &0\\
0 &0 & z^k  & z^a f_{1}  \\
0& 0&0 & z^l\\
\end{pmatrix}
\rightarrow
\begin{pmatrix}
z^w & 0 &0& z^{i}  \\
0  & 1&z   &0\\
0 &0 & z^kf_{1}^{-1}  & z^a  \\
0& 0&0 & z^l\\
\end{pmatrix}$

Step 2: Row operations on row 2,3 can split $X_{3,3}$ into even and odd parts to $X_{3,2}, X_{3,3}$ (we have $z^kf^{-1}_{1}=z^{k'}f_{e1}-z^{a'}f_{e2}$, with $v(f_{e1})=v(f_{e2})=0$ and $\{a',a\}$ are of different parity), i.e., $X_{3,3}\to -z^{a'}f_{e2}, X_{3,2} \to -z^{k'}f_{e1} $. Then, we multiply column 3 by $f_{e2}^{-1}$, multiply row 2 by $f_{e2}$, multiply column 2 by $f_{e2}^{-1}$. That is, $f_{e2}^{-1}$ is in $X_{2,2}$(the matrix on the left).
Notice $2|k'-1+a'$. Thus, by column operation on column 2,3 we eliminate $X_{3,2}$ (the matrix on the right).

$
\begin{pmatrix}
z^w & 0 &0   & z^{i}  \\
0  & 1&z  &0\\
0&-z^{k'-1}f_{e1}f_{e2}^{-1} &-z^{a'}   & z^a \\

0& 0  &0  &  z^{l} \\
\end{pmatrix}
\rightarrow
\begin{pmatrix}
z^w & 0 &0   & z^{i}  \\
0  & 1-z^{k'-a'}f_{e1}f^{-1}_{e2}&z  &0\\
0&0&-z^{a'}   & z^a \\

0& 0  &0  &  z^{l} \\
\end{pmatrix}
$

Step 3: We write $z^uf_{2}=1-z^{k'-a'}f_{e1}f^{-1}_{e2} (v(f_{2})=0)$. Multiplying row 2 by $f_{o2}^{-1}$, column operations on column 2,3 will make $X_{2,3}/z^{v(X_{2,3})}\in k((z^2)) $. We write  $zf^{-1}_{2}=z^{u'}f_{eu1}+z^{u''}f_{eu2}$ where $2|u-u' \text{ and } u',u''$are of different parity and $v(f_{eu2})=0$. By column operations on columns 2,3, $X_{2,3}$ would be reduced to $z^{u''}f_{eu2}$. Next by trick 1, the term $f_{eu2}$ and minus sign are eliminated.

$
\begin{pmatrix}
z^w & 0 &0  & z^{i}  \\
0  &z^u& zf_{e2}^{-1}  &0\\
0 & 0&-z^{a'}   & z^a \\
0& 0  & 0 & z^l\\
\end{pmatrix}\rightarrow
\begin{pmatrix}
z^w & 0 &0  & z^{i}  \\
0  &z^u& z^{u''}f_{eu2}  &0\\
0 & 0&-z^{a'}   & z^a \\
0& 0  & 0 & z^l\\
\end{pmatrix}
\rightarrow
\begin{pmatrix}
z^w & 0 &0  & z^{i}  \\
0  &z^u& z^{u''} &0\\
0 & 0&z^{a'}   & z^a \\
0& 0  & 0 & z^l\\
\end{pmatrix}$

Now, redoing steps 1-4 for all two-dimensional blocks such that all nonzero entries in column $m$ are in the form of $z^a$ would finish part 1.

Part 2. Taking any nonzero elements to be 0 will break the matrix into smaller blocks. Recall that there are two semi-block forms from step 1 above:
\[
\begin{pmatrix}
z^{u} & z^{u''} &\cdots& 0\\
0 & z^{a'} &\cdots& z^a\\
\end{pmatrix}
\text{ and }
\begin{pmatrix}
1 & z &\cdots& 0\\
0 & z^{k} &\cdots& z^a\\
\end{pmatrix}.
\]
The former is the semi-block obtained from part 1 and the latter is the case where $f_1 \in k[[z^2]]$. The former has a restriction: For the two pairs  $\{u,u''\}\text{ and }\{a,a'\}$, elements in each pair are of different parities. The latter has a restriction: $k>1.$ 

If $u''\geq a'$, trick 2 takes the former semi-block to $
\begin{pmatrix}
z^{u} &0 &\cdots& 0\\
0 & z^{a'} &\cdots& z^a\\
\end{pmatrix}$ .

For $\begin{pmatrix}
z^{u} & z^{u''} &\cdots& 0\\
0 & z^{a'} &\cdots& z^a\\
\end{pmatrix}$, we rewrite it as $\begin{pmatrix}
z^{u''} & z^{u} &\cdots& 0\\
0 & z^{a'+u-u''} &\cdots& z^a\\
\end{pmatrix}$. Similarly for $\begin{pmatrix}
1 & z &\cdots& 0\\
0 & z^{k} &\cdots& z^a\\
\end{pmatrix}$, if $k,a$ are of different parity, we rewrite it as $\begin{pmatrix}
z & 1 &\cdots& 0\\
0 & z^{k-1} &\cdots& z^a\\
\end{pmatrix}$.
Now, two semi-blocks are both of the form $\begin{pmatrix}
z^w & z^{w'} &\cdots& 0\\
0 & z^{b} &\cdots& z^{b'}\\
\end{pmatrix}$, where for the two pairs  $\{w,w'\}\text{ and }\{b,b'+1\}$, elements in each pair are of different parities and $w'<b$.

\begin{comment}

$\begin{pmatrix}
z^{a_{1,1}} & 0 &0 &\cdots&\cdots&\cdots&\cdots & z^{a_{1,2}}  \\
0 &z^{a_{2,1}}&z^{a_{2,2}} &\cdots & \cdots&\cdots&\cdots & 0\\
z^{a'_{3,2}} &0&z^{a_{3,1}}&\cdots &\cdots&\cdots& \cdots &0\\
\vdots  & \vdots&\vdots&\vdots  &\vdots & \ddots & \vdots  \\
0 &0&0&0&\cdots&\cdots & \cdots &z^{a_{n,1}}\\
\end{pmatrix}$
$\begin{pmatrix}
z^{a_{1,1}} & 0 &0 &\cdots&\cdots&\cdots&\cdots & z^{a_{1,2}}  \\
0 &z^{a_{2,1}}&z^{a_{2,2}} &\cdots & \cdots&\cdots&\cdots & 0\\
z^{a'_{3,2}} &0&z^{a_{3,1}}&\cdots &\cdots&\cdots& \cdots &0\\
\vdots  & \vdots&\vdots&\vdots  &\vdots & \ddots & \vdots  \\
0 &0&z^{a'_{n,1}}&0&\cdots&\cdots & \cdots &0\\
\end{pmatrix}$
$\mathbf{A}',\mathbf{B}'$ in order,
with $\{a_{i,1},a_{i,2}\}$,$\{a'_{3,2},a_{3,1}\}$ of opposite parity, $a_{2,2}<a_{3,1},a_{1,1}<a'_{3,2}$, 

\end{comment}

We claim that on the submatrix of $\{1,2,3,m\}$ rows and columns, by those allowed operations, we can extract one two-dimensional block from the matrix and obtain
$\begin{pmatrix}
z^{a_{1,1}} & 0 &0 & z^{a_{1,2}}  \\
0 &z^{a_{2,1}}&z^{a_{2,2}}  &0\\
0 &0&z^{a_{3,1}}&z^{a_{3,2}}\\
0 &0&0 &z^{a_{n,1}}\\
\end{pmatrix}.$
Let $\text{ condition } (\mathbf{A'})$  denote
$a_{3,1}-a_{2,2}\leq a_{3,2}-a_{1,2} $, and $\text{ condition } (\mathbf{B'})$ denote $a_{3,1}-a_{2,2}> a_{3,2}-a_{1,2}$.

$\mathbf{A'}$: 
$\begin{pmatrix}
z^{a_{1,1}} & 0 &0 & z^{a_{1,2}}  \\
0 &z^{a_{2,1}}&z^{a_{2,2}}  &0\\
0 &0&z^{a_{3,1}}&z^{a_{3,2}}\\
0 &0&0 &z^{a_{n,1}}\\
\end{pmatrix}
\rightarrow
\begin{pmatrix}
z^{a_{1,1}} & 0 &0  & z^{a_{1,2}}  \\
0 &z^{a_{2,1}}&z^{a_{2,2}}  & z^{a_{3,2}+a_{2,2}-a_{3,1}}\\
0 &0&z^{a_{3,1}}  &0\\

0 &0&0 &z^{a_{n,1}}\\
\end{pmatrix}
$

$\rightarrow
\begin{pmatrix}
z^{a_{1,1}} & 0 &0  & z^{a_{1,2}}  \\
z^{a_{3,2}+a_{2,2}-a_{3,1}+a_{1,1}-a_{1,2}} &z^{a_{2,1}}&z^{a_{2,2}}  & 0\\
0 &0&z^{a_{3,1}}  &0\\

0 &0&0 &z^{a_{n,1}}\\
\end{pmatrix}
\rightarrow
\begin{pmatrix}
z^{a_{1,1}} & 0 &0  & z^{a_{1,2}}  \\
0 &z^{a_{2,1}}&z^{a_{2,2}}  & 0\\
0 &0&z^{a_{3,1}}  &0\\

0 &0&0 &z^{a_{n,1}}\\
\end{pmatrix}
$

Condition $\mathbf{B'}$ is more subtle than $\mathbf{A'}$: First we do row operations from row 3 to $5,7,\ldots,$ and $m$ to remove all entries in $X_{2k+1,m}$ for $k>1$ and $X_{m,m}$. That is, terms on column $m$ are transferred to column 3. In the following submatrices  we rewrite $a_{n,1}+a_{3,1}-a_{3,2}\text{ to be } a'_{n,1}$.

$\begin{pmatrix}
z^{a_{1,1}} & 0 &0 & z^{a_{1,2}}  \\
0 &z^{a_{2,1}}&z^{a_{2,2}}  &0\\
0 &0&z^{a_{3,1}}&z^{a_{3,2}}\\
0 &0&z^{a'_{n,1}} &0\\
\end{pmatrix}
\rightarrow
\begin{pmatrix}
z^{a_{1,1}} & 0 &z^{a_{3,1}-a_{3,2}+a_{1,2}} & z^{a_{1,2}}  \\
0 &z^{a_{2,1}}&z^{a_{2,2}}  &0\\
0 &0&0&z^{a_{3,2}}\\

0 &0&z^{a'_{n,1}} &0\\
\end{pmatrix}$

$\rightarrow
\begin{pmatrix}
z^{a_{1,1}} & z^{a_{3,1}-a_{3,2}+a_{1,2}+a_{2,1}-a_{2,2}}  &0 & z^{a_{1,2}}  \\
0 &z^{a_{2,1}}&z^{a_{2,2}}  &0\\
0 &0&0&z^{a_{3,2}}\\

0 &0&z^{a'_{n,1}} &0\\
\end{pmatrix}
\rightarrow
\begin{pmatrix}
z^{a_{1,1}} & 0  &0 & z^{a_{1,2}}  \\
0 &z^{a_{2,1}}&z^{a_{2,2}}  &0\\
0 &0&0&z^{a_{3,2}}\\
0 &0&z^{a'_{n,1}} &0\\
\end{pmatrix}
$

Now  rows 1,3 and columns 1,4 form a two-dimensional block in the matrix (column 3 still holds all the original column $m$ information). 

The remaining of this proof is to show such blockwise decomposition is unique. 

Given $B(z)\in \text{GL}_n(k((z)))$, we write a decomposition of $B$ as $B(z)=g\Lambda h$, where $g\in \text{GL}_n(k[[z]])$. $h\in \text{GL}_n(k((z^2)))$. Denote $\tilde{B}(z)=B^{-1}(-z)\text{ and }\hat{B}=B\tilde{B}$.
For $C=\begin{pmatrix}
1&z\\
0&z^i\\
\end{pmatrix}$,
the corresponding $\hat{C}$ is $\begin{pmatrix}
1&\frac{-2}{(-z)^{i-1}}\\
0&(-1)^i\\
\end{pmatrix}$ and $\rho(\hat{C})=\{i-1,1-i\}$.
For $C=1$, $\hat{C}=1,\rho(\hat{C})=\{0\}$. For $C=z$, $\hat{C}=-1,\rho(\hat{C})=\{0\}$. 
By Cartan decomposition, $\rho(\hat{B})$ is unique. 
$B=g\Lambda h=g'\Lambda' h',\hat{B}=g\Lambda\tilde{\Lambda}\tilde{g}=g'\Lambda'\tilde{\Lambda}'\tilde{g}'$.
By abuse of notation, we write $z^i=\begin{pmatrix}
1&z\\
0&z^{i}\\
\end{pmatrix}$.
For $\Lambda=\oplus_{i}z^{c_i}, $ we denote $\sigma(\Lambda)=\{c_1,...c_j \}$. Here a block diagonal matrix $\Lambda$ has $z^{c_i}$ on the diagonal in the order of valuation non-decreasing.
We rewrite $\sigma(\Lambda)=\{a_1^{n_1},a_2^{n_2},...,1^{l_1},0^{r_1}\},  \sigma(\Lambda')=\{b_1^{n_1'},b_2^{n_2'},...,1^{l_2},0^{r_2}\}$. Therefore \[\rho(\hat{B})=\rho(\Lambda\tilde{\Lambda})= \{(a_1-1)^{n_1},(a_2-1)^{n_2},...,0^{l_1+r_1},(1-a_2)^{n_2},(1-a_1)^{n_1} \}=\rho(\Lambda'\tilde{\Lambda}').\] Thus we have these equalities: $a_i=b_i,n_i=n_i \text{ for all i, and } l_1+r_1=l_2+r_2.$ It suffices to show $l_1=l_2, r_1=r_2$ for this part.

We prove this part by contradiction.
Without loss of generality, $l_1>l_2$. 
We rewrite $ g\Lambda\tilde{\Lambda}\tilde{g}=g'\Lambda'\tilde{\Lambda}'\tilde{g}' \text{ as }g_0\Lambda\tilde{\Lambda}=\Lambda'\tilde{\Lambda}'g_0(-z)$. 
\begin{comment}
$\begin{pmatrix}
a_{1,1,o}+a_{1,1,e}& a_{1,2,o}+a_{1,2,e}& ... & a_{1,n,o}+a_{1,n,e}\\
...\\
a_{n,1,o}+a_{n,1,e}& a_{n,2,o}+a_{n,2,e}& ... & a_{n,n,o}+a_{n,n,e}\\
\end{pmatrix}
\begin{pmatrix}
1&0&0&...&0\\
0&1&0&...&0\\
0&0&-1&...&0\\
...\\
0&...&1&(-1)^{a_1}2z^{1-a_1} &...&0\\
0&...&0&(-1)^{a_1}&...&0\\
...\\
\end{pmatrix}$

$=
\begin{pmatrix}
1&0&0&...&0\\
...\\
0&...&1&...&\cdots&0\\
0&...&0&-1&...&\cdots&0\\
0&\cdots&\cdots&\cdots&1&(-1)^{a_1}2z^{1-a_1} &...&0\\
0&\cdots&\cdots&\cdots&0&(-1)^{a_1}&...&0\\
...\\
\end{pmatrix}
\begin{pmatrix}
-a_{1,1,o}+a_{1,1,e}& -a_{1,2,o}+a_{1,2,e}& ... & -a_{1,n,o}+a_{1,n,e}\\
...\\
-a_{n,1,o}+a_{n,1,e}& -a_{n,2,o}+a_{n,2,e}& ... & -a_{n,n,o}+a_{n,n,e}\\
\end{pmatrix}$
\end{comment} 

We write the $(i,j)$-th entry in the matrix $g_0$ in the form of  $a_{i,j}=a_{i,j,e}+a_{i,j,o}$, where $a_{i,j,o}$ is the odd part and $a_{i,j,e}$ is the even part.
By direct computation of the first $l_1+r_1$ by $l_1+r_1$  submatrix of LHS-RHS $(\text{i.e.,}g_0\Lambda\tilde{\Lambda}-\Lambda'\tilde{\Lambda}'g_0(-z))$, we have  

$\begin{pmatrix}
a_{1,1,o}&a_{1,2,o}&\cdots&a_{1,r_1+1,e}&\cdots&a_{1,l_1+r_1,e}\\
a_{2,1,o}&a_{2,2,o}&\cdots&a_{2,r_1+1,e}&\cdots&a_{2,l_1+r_1,e}\\
...\\
-a_{r_2+1,1,e}&-a_{r_2+1,2,e}&\cdots&-a_{r_2+1,r_1+1,o}&\cdots&-a_{r_2+1,l_1+r_1,o}\\
...\\
\end{pmatrix}=0$.

By direct computation on submatrix with rows  $\{l_1+r_1,l_1+r_1+1,...,n\}$ and columns  $\{1,2,...,l_1+r_1\}$  of 
 LHS-RHS,   we have
$a_{l_1+r_1+2k,l}= a_{l_1+r_1+2k,l,e} \text{ or } a_{l_1+r_1+2k,l,o}\text{ or }0$ and $v(a_{l_1+r_1+2k,l})\geq m_k$ for $l\leq l_1+r_1,k> 0$. 
By direct computation on submatrix with rows $\{1,2,...,l_1+r_1\}$ and columns  $\{l_1+r_1,l_1+r_1+1,...,n\}$  of LHS-RHS,  we have 
$a_{l,l_1+r_1+2k+1}= a_{l,l_1+r_1+2k-1,e} \text{ or } a_{l,l_1+r_1+2k-1,o}\text{ or }0$ and $v(a_{l,l_1+r_1+2k+1})\geq m_k-1$ for $l\leq l_1+r_1,k>0$. 
By direct computation on submatrix with rows $\{l_1+r_1,l_1+r_1+1,\ldots,n\}$ and columns  $\{l_1+r_1,l_1+r_1+1,...,n\}$  of LHS-RHS, we have  
$a_{l_1+r_1+2k',l_1+r_1+2k+1}= a_{l_1+r_1+2k',l_1+r_1+2k+1,e} \text{ or } a_{l_1+r_1+2k',l_1+r_1+2k+1,o}\text{ or }0$ where the valuation $\ge \text{ max } (m_k,m_{k'})-1$ for $k',k>0$. 
Here is an illustration of these rules, where $X$ represents an element complying to the restriction. (We only need the fact that each nonzero term has valuation greater than 0.)

$\begin{pmatrix}
a_{1,1,e}&\cdots&a_{1,l_1+r_1,o}&X&a_{1,l_1+r_1+2}&\cdots&X&a_{1,n}\\
...\\
a_{l_1+r_1,1,e}&\cdots&a_{l_1+r_1,l_1+r_1,o}&X&a_{l_1+r_1,l_1+r_1+2}&\cdots&X&a_{l_1+r_1,n}\\
a_{l_1+r_1+1,1}&\cdots&a_{l_1+r_1+1,l_1+r_1}&a_{l_1+r_1+1,l_1+r_1+1}&a_{l_1+r_1+1,l_1+r_1+2}&\cdots&a_{l_1+r_1+1,n-1} &a_{l_1+r_1+1,n}\\
X&\cdots&X&X&a_{l_1+r_1+2,l_1+r_1+2}&\cdots &X&a_{l_1+r_1+2,n}\\
...\\
a_{n-1,1}&\cdots&a_{n-1,l_1+r_1}&a_{n-1,l_1+r_1+1}&a_{n-1,l_1+r_1+2}&\cdots&a_{n-1,n-1} &a_{n-1,n}\\
X&\cdots&X&X&a_{n,l_1+r_1+2}&\cdots &X &a_{n,n}\\
\end{pmatrix}$. 

From Leibniz's formula of determinant and $g_0\in \text{GL}_n(k[[z]])$, the least valuation of nonzero summands is 0. We write one such summand with valuation 0 by $\tau$.  Notice that in row $l_1+r_1+2k \text{ } (k>0)$, nonzero $a_{l_1+r_1+2k,l} \text{ } a_{l_1+r_1+2k,l_1+r_1+2k'-1}\text{ where }(k'>0,l\leq l_1+r_1)$  have valuations greater than 0. Thus, we require that $\tau $ picks $\frac{n-l_1+r_1}{2}$ elements in these rows from columns $\{l_1+r_1+2,l_1+r_1+4,...,n\}$. In rows $\leq l_1+r_1$, no element in columns $\{l_1+r_1+2,l_1+r_1+4,...,n\}$ is in $\tau$. From observation on column $l_1+r_1+2k-1 (k>0)$, nonzero $a_{l_1+r_1+2l,l_1+r_1+2k+1}$ for all $l,k$ have valuations greater than 0.
Thus, we notice that all elements in $\tau $ which are in the first $l_1+r_1$ rows are in the first $l_1+r_1$ column. That is, in the first $l_1+r_1$ by $l_1+r_1$  submatrix of $g_0$ (denote by $g_1$), there exists a summand of Leibniz formula with a 0 valuation.

$g_1=\begin{pmatrix}
a_{1,1,e}&a_{1,2,e}&\cdots&a_{1,r_1+1,o}&\cdots&a_{1,l_1+r_1,o}\\
a_{2,1,e}&a_{2,2,e}&\cdots&a_{2,r_1+1,o}&\cdots&a_{2,l_1+r_1,o}\\
...\\
a_{r_2+1,1,o}&a_{r_2+1,2,o}&\cdots&a_{r_2+1,r_1+1,e}&\cdots&a_{r_2+1,l_1+r_1,e}\\
...\\
a_{l_1+r_1,1,o}&a_{l_1+r_1,2,o}&\cdots&a_{l_1+r_1,r_1+1,e}&\cdots&a_{l_1+r_1,l_1+r_1,e}\\
\end{pmatrix}$. 

For nonzero $a_{i,j,e}, a_{i,j,o}$, we have $v(a_{i,j,e})\geq 0, v(a_{i,j,o})\geq 1$. Any nonzero summand will have valuation $\geq |r_1-r_2|.$
The contradiction appears since $r_1<r_2$.
\end{proof}

We define the \emph{symmetric coweight} $\sigma$ on the blockwise decomposition. Recall in the proof of uniqueness in Theorem ~\ref{start_decomp}, for $\Lambda=\oplus_{i}z^{c_i}, $ we have $\sigma(\Lambda)=\{c_1,...c_j \}$. Therefore, for any $B\in \text{GL}_n (k[[z]])\Lambda \text{GL}_n (k((z^2)))$, we extend the definition of $\sigma(\Lambda)$ to $B$ such that $\sigma(B)=\sigma(\Lambda)$.

\subsection{Pieri and Dual Pieri Rule}

Having classified matrices of this form, it is natural to ask how a small change to the matrix affects the class of the matrix. We have already considered a version of this in the proof of Theorem~\ref{start_decomp}, where we added a single row and column to a matrix in block form and reduced it to block form. Another natural operation corresponds to a meromorphic change of basis in the difference equation, or in terms of $B$, left-multiplying it by a matrix $A$ which is not invertibly holomorphic. There are two cases in which we obtain particularly nice results on how the type can change.

In this subsection, we define two relations $a\sim b \text{ and } a\sim' b $ corresponding to  $a_i-1\le b_i\le a_i+1 \text{ and } a_{i+1}\le b_i\le a_{i-1}$ for all i. In the language of Young diagram, it is equivalent to: 
$\lambda\sim\mu$ means that $\lambda $ can be obtained from $\mu$ by adding and subtracting a vertical strip and $\lambda\sim'\mu$ means that $\lambda $ can be obtained from $\mu$ by adding and subtracting a horizontal strip.

\begin{theorem}
\label{pieri}
For any $A\in \text{GL}_n (k((z)))$ with  $\rho(A)=\{1^l\}$ and any B$\in  \text{GL}_n (k((z)))$ with $\sigma(B)=\{a_1,...,a_n\}$, we write $\sigma(AB)=\{b_1,...,b_n\}$.
Then, we have $\sigma(B)\sim \sigma(AB).$

\end{theorem}

\begin{theorem}
\label{dualpieri}
For any $A\in \text{GL}_n (k((z)))$ with  $\rho(A)=\{l\}$ and any B$\in  \text{GL}_n (\mathbb{k}((z)))$ with $\sigma(B)=\{a_1,...,a_n\}$, we write $\sigma(AB)=\{b_1,...,b_n\}$.
Then, we have $\sigma(B)\sim' \sigma(AB).$

\end{theorem}
We need two lemmas to prove the above two theorems. 
\begin{lemma}
\label{lemma1}
For any $A\in \text{GL}_n ({k}((z))), \text{ we write }\rho(A)=\{{a_1},\ldots, {a_n}\}$. If $a_n\geq 0$,  $\sigma(A)=\{{b_1},\ldots,{b_n}\}$ has the property $b_j\leq a_j$ for all $j$.
\end{lemma}

\begin{proof}

If $a_n\geq 0$, each element in $A$ has nonnegative valuation. Recall in Theorem~\ref{start_decomp}, in the algorithm of decomposing $gAh=\Lambda$, $h$ can be written as a  product of transvectional matrices with entries in $k[[z^2]]$, diagonal matrices with entries of nonpositive valuation and permutation matrices: In Case 2 Part 1 step 2, if $k'-1<a,$ then, instead of making $X_{3,2}\to 0$, we do similar column operations to make $X_{3,3}\to 0$. In Case 2 Part 2 condition $\mathbf{A}'$ and $\mathbf{B}'$, we do column multiplication on column 2,3 to make $X_{2,3},X_{1,4}\in \{1,z\}.$ Then, all transvectional matrix (column operations) have entries in $k[[z^2]]$.   Thus $\rho(H)=\{h_1,...h_n\}$ has  $h_1\leq 0$. $\sigma(A)=\sigma(\Lambda)=\rho(\Lambda)=\rho(GAH)=\rho(AH)$. From \cite[Chapter~5]{macdonald1998symmetric} , $b_j\leq a_j$ for all $j$.  
\end{proof}

\begin{lemma}
\label{lemma2}
For any $A\in \text{GL}_n (k((z))), \text{ we write }\rho(A)=\{{a_1},...{a_n}\}$. If  $a_{n-1}\geq 0 \text{ and }a_{n}<0$ , $\sigma(A)=\{{b_1},...{b_n}\}$ has the property $b_j\leq  a_{j-1}$ for all j.

\end{lemma}

\begin{proof}
Claim:
There exists a $D\in \text{GL}_n (k((z)))$, where the first row is $(z^{w},z^{c},0,..,0) $ ($w\in \{0,1\};c>w; w, c\text{ are of different parity} ) \text{ or } (z^w,0,0,..,0)$ and the first column is $(z^w,0,0,..,0)$, with the following properties: $\sigma(A)=\sigma(D)$; the lower right $n-1$ by $n-1$ submatrix of $D$ can be written in the form of  $D'[z^a]$, where $D'\in\text{GL}_{n-1} (k((z)))$, $\rho(D')=\{a_1,a_2,...a_{n-1}\}$, $[z^a]$ denotes a $n-1$ by $n-1$ diagonal matrix with the first entry $z^a$ and other entries 1 and $a\geq 0$; if $w=1$, then $a=0$.

We denote $\rho(D)=\{d_1,...d_{n-1},d_n\}$. With the claim, Lemma~\ref{lemma1} implies $b_i\leq d_i$ for all $i$.
Recall the original Dual Pieri rule for $\rho$ states: for $E \text{ and }E'=XE$, with $\rho(X)=\{f\}$, $f>0$, and $\rho(E)=\{e_1,..e_n\}$, $\rho(E')=\{f_1,..f_n\}$, we have $e_{i-1}\geq f_i\geq e_i, f_i\leq e_i+f$.  In other word, there is no vertical strip on the diagram. Thus, $\rho(D'[z^a])$=$\{\tilde{d_1},...\tilde{d_{n-1}}\}$ has  $\tilde{d_i}\leq a_{i-1}$ for $i<n$. One direct result from the claim is that $D$ can be decomposed in to $D'[a]$ and $z^w$. Thus, if $w=1$, $\{\tilde{d_1},...\tilde{d_{n-1}}\}=\rho(D'[a])=\rho(D')=\{a_1,a_2,...a_{n-1}\}$. Inserting one additional $1$ ($z^w$ part) leads to $d_i<a_{i-1}$ for all $i\leq n$. If $w=0$, we can rewrite $\rho(D)=\{\tilde{d_1},...\tilde{d_{n-1}},0\}$. Thus we have the property $d_i\leq a_{i-1}$ for all $i$ (if $i=n$, $d_n=0\leq a_{n-1}$).
Combining with $b_i\leq d_i$, the inequality $b_i\leq a_{i-1}$ is reached for all $i$.

Proof of the claim:

We will present an algorithm to find such $D$. All matrices operations are allowed operations from Theorem~\ref{start_decomp}.
Pick $A_{i,j}$ to be an entry in $A$ with the least valuation, i.e., $v(A_{i,j})=a_n$. Let  $B=g_0 (1,i) A (1,j)$ such that $B_{1,1}=z^{a_n}$, $B_{i,1}=0$ for $i>1$, where $(1,i),(1,j)$ are permutation matrices and $g_0\in GL_n(k[[z]])$. (We permute rows and columns such that $A_{i,j}$ is in the 1,1 position and reduce it to $z^{a_n}$ and eliminate all other entries in column 1.) The bottom right $n-1$ by $n-1$ submatrix has $\rho$=$\{a_1,..a_{n-1}\}$. Now, we have $v(B_{1,l})\geq a_n$ for $l>1.$ By trick 3 from Theorem~\ref{start_decomp}, $B_{1,l}=z^{c_l} f_{l}$ or $0$,  where $a_n< c_l$ and $\{a_n,c_l\}$ are in different parities and $f_l\in k[[z^2]]^*$ for all $l>1$. If $B_{1,i}$'s are all $0$, then the proof of the claim is done: $D_{1,2}=0,a=0$. If not all $B_{1,l}$ is $0$, we pick the least $c_l$ (assume it is $c_p$) and do trick 1 to remove $f_{l}$. Then we cancel all other $B_{1,l}$'s by column operations and switch column $p, 2$. Let $C$ denote the $g_1B h_1$ where $g_1\in GL_n(k[[z]]),h_1\in GL_n(k[[z^2]]),$ and $h_1$ is the above column operations, $g_1$ is the above row operation. Remember that the bottom right $n-1$ by $n-1$ submatrix of $C$ still has $\rho$=$\{a_1,..a_{n-1}\}$. We denote the submatrix $[C]_{n-1}$.
$C$ has the first row $(z^{a_n},z^{c_p},0,..,0)$. $c_p>a_n\text{ and }\{c,a_n\}$ are of different parity. 

If $c_p\geq 0$ and $2|a_n$,  $C$ right multiplies by $\mathbbm{1}_{(1,1)}(z^{-a_n})$. In this case, $w=0, c=c_p, a=0$.
If $c_p\geq 0$ and $2|a_n+1$ and $c_p\neq 0$, $ C$ right multiplies by $\mathbbm{1}_{(1,1)}(z^{1-a_n})$. In this case $w=1,c=c_p, a=0$.
If $c_p\geq 0$ and $2|a_n+1$ and $c_p=0$, $C$ right multiplies by $\mathbbm{1}_{(1,1)}(z^{1-a_n}) (1,2)$ and left multiplies by $\prod_{j>1} \mathbbm{1}_{(j,1)}(-C_{j,2})$. In this case, $w=0,c=1, a=1$.
If $c_p< 0$ and $2|a_n$,  $C$ right multiplies by 
$\mathbbm{1}_{(1,1)}(z^{-a_n}) \mathbbm{1}_{(2,2)}(z^{1-c_p})$. In this case $w=0, c=1, a=1-c_p$.
If $c_p< 0$ and $2|a_n+1$,  $C$ right multiplies by 
$\mathbbm{1}_{(1,1)}(z^{c_p+1-a_n}) (1,2) \mathbbm{1}_{(1,1)}(z^{-c_p})\mathbbm{1}_{(2,2)}(z^{-c_p})$ and left multiplies by $\prod_{j>1} \mathbbm{1}_{(j,1)}(-C_{j,2}z^{-c_p})$. In this case, $w=0, c=1, a=1-c_p$.
\end{proof}

\begin{proof}[Proof of Theorem \ref{pieri}]

We write $gABh=\Lambda, g'Bh'=\Lambda'$ with $g,g'\in \text{GL}_n(k[[z]])$ and $h,h'\in \text{GL}_n(k((z^2)))$. $\Lambda,\Lambda'$ are block diagonal matrices.

$\sigma(\Lambda')=\rho(\Lambda')=(a_1,a_2,...,a_n).$ $\sigma(\Lambda)=\sigma(Ag'^{-1}\Lambda')=(b_1,b_2,...,b_n)$. Also $\rho(Ag'^{-1}=\{1^l\}$. Therefore, $\rho(Ag'^{-1}\Lambda')=(c_1,c_2,..,c_n)$ has $a_i \leq c_i\leq a_i+1$ for all $i\leq n$ (original Pieri rule). By Lemma~\ref{lemma1},  $b_i\leq c_i$ for all i. Therefore, $b_i\leq a_i+1$.

For the other inequality, i.e., $a_i-1\leq b_i$ , we take $A'$ to be $A'=zA^{-1}g^{-1}$. Now, one direct result from A is $\rho(A')=\{1^{n-l}\}$. Notice that $A'gABh=A'\Lambda,A'gABh=zBh$. Thus, $\sigma(A'\Lambda)=\sigma(z\Lambda')=(\bar{a_1},\bar{a_2},...,\bar{a_n})$. Since multiplying $z$ by $\Lambda'$ only switches $1$ and $z$, we know $\bar{a_i}=a_i$ for $a_i>1$ and all other $\bar{a_i}$ are less than 2. Applying the result from the previous paragraph on $\Lambda$ and $A'\Lambda$ implies $\bar{a_i}\leq b_i+1$. This equality leads to $a_i\leq b_i+1$ from the fact that $b_i\geq 0.$ 
\end{proof}

\begin{proof}[Proof of Theorem \ref{dualpieri}]

We write $gABh\Lambda\text{ and } g'Bh'=\Lambda'$ with $g,g'\in \text{GL}_n(k[[z]])$ and $h,h'\in \text{GL}_n(k((z^2)))$,where $\Lambda,\Lambda'$ are block diagonal matrices.

We have $\sigma(\Lambda')=\rho(\Lambda')=(a_1,a_2,...,a_n)$ and  $\sigma(\Lambda)=\sigma(Ag'^{-1}\Lambda')=(b_1,b_2,...,b_n)$. Also $\rho(Ag'^{-1}=\{l\}$. Therefore, $\rho(Ag'^{-1}\Lambda')=(c_1,c_2,...,c_n)$ has $a_i \leq c_i\leq a_{i-1}$ for all $i\leq n$ (original Dual Pieri rule). By Lemma~\ref{lemma2},  $b_i\leq c_i$ for all i. Therefore, $b_i\leq a_{i-1}$.

For the other inequality, i.e., $a_i\leq b_{i-1}$, we take $A'$ to be $A'=A^{-1}g^{-1}$. Similarly to the proof of Theorem~\ref{pieri}, $\sigma(\Lambda')=\sigma(A'\Lambda)$. We write $\rho(A'\Lambda)=(e_1,...,e_n)$.
Recall the original dual Pieri rule for $\rho$ of $A',A'\Lambda$  shows $b_{i+1}\leq e_i\leq b_i$.
If $e_n\geq 0$, by Lemma~\ref{lemma1}, we have $a_i\leq e_i$. Thus $a_i\leq b_i\leq b_{i-1}$. If $e_n<0$, then by the inequality on original dual Pieri rule, we have $e_{n-1}\geq 0$. By Lemma~\ref{lemma2}, we have $a_i\leq e_{i-1}$. Thus, both cases yield $a_i\leq b_{i-1}$. 
\end{proof}

\section{Hecke Module}
In section2.2, Pieri and Dual Pieri rules provide two nice results on how left-multiplying $B$ by a matrix $A$ with certain type can change the symmetric coweight of $B$. The next natural question to ask is how spreading can the symmetric coweight be after the left multiplication. Similarly to the classical study of (spherical) Hecke algebra, we will define a Hecke module over double coset structure to describe the spreading with respect to Haar measure.

Throughout this chapter, we write $K=\text{GL}_n(k[[z]]),H=\text{GL}_n(k((z^2))), G=\text{GL}_n(k((z))).$ 
We define $\pi^\lambda=\text{diag}(z^{\lambda_n},...,z^{\lambda_1})$ and $c_\lambda$ is the characteristic function of the double coset $K\pi^\lambda K$. 
We define $\Pi^\lambda$ the block diagonal matrix with valuations in nondecreasing order. Let  $d_\lambda$ be the characteristic function of the double coset $K\Pi^\lambda H$.

Recall the construction of (spherical) Hecke algebra $\mathcal{H}(G,K)$ in \cite[Chapter~5]{macdonald1998symmetric},  MacDonald proves that $c_\lambda$'s form a $\mathbbm{C}$-basis of $\mathcal{H}(G,K)$. 

\begin{definition}
The Hecke module $\mathcal{H}(G,H,K)$ is the $\mathbbm{C}-$ vector space of all complex valued continuous functions on $G$  that is a linear combination of  $d_\lambda$. Equivalently, $d_\lambda$'s form a $\mathbbm{C}$-basis of $\mathcal{H}(G,H,K)$. We define a multiplication on $\mathcal{H}(G,H,K)$ and  $\mathcal{H}(G,K)$: $f\times g(x)=\int_G g(y^{-1}x)f(y)dy$ for $f\in \mathcal{H}(G,K)$, $g\in \mathcal{H}(G,H,K)$, $x\in G$. By Proposition~\ref{defmodule}, the Hecke module $\mathcal{H}(G,H,K)$ is a left $\mathcal{H}(G,K)$-module.
\end{definition}
\begin{comment}

Naturally, we define $\mathcal{H}(G,H,K)$ to be the $\mathbbm{C}-$ vector space of all complex valued continuous functions on $G$  that is a linear combination of  $d_\lambda$ (H is not compact, thus a definition of all continuous function with compact support cannot be applied here). Equivalently, $d_\lambda$'s form a $\mathbbm{C}$-basis of $\mathcal{H}(G,H,K)$.  
\end{comment}
Unlike the definition of the (spherical) Hecke algebra, in this case, $H$ is not compact, thus a definition of all continuous function with compact support cannot be applied here.
Any function in $\mathcal{H}(G,H,K)$ is invariant with respect to $(H,K)$, i.e., $f(kxh)=f(x)$ for all $x\in G$, $h\in H$, $k\in K$.

\begin{proposition}
\label{defmodule}
$\mathcal{H}(G,H,K)$ is a left $\mathcal{H}(G,K)$-module.
\end{proposition}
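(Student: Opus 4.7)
\bigskip

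The plan is to verify three things in succession: (i) the convolution $f\times g$ is pointwise well-defined, (ii) it is left $K$-invariant and right $H$-invariant, (iii) it lies in the span of the $d_\nu$'s (so that the module structure actually lands in $\mathcal{H}(G,H,K)$), and finally to check associativity of the action and that the unit $c_0=\mathbf{1}_K$ of $\mathcal{H}(G,K)$ acts as the identity.

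For (i) and (ii), the key observation is that every $f\in\mathcal{H}(G,K)$ is a finite linear combination of $c_\lambda$'s, hence has compact support in $G$ (each $K\pi^\lambda K$ being a finite union of left $K$-cosets of the compact open subgroup $K$). Thus the integral defining $f\times g(x)$ is really an integral of $g(y^{-1}x)$ over a compact set, and converges. The invariance is then a routine change of variables: replacing $x$ by $kx$ and substituting $y\mapsto ky$ (left Haar invariance, together with $K$-bi-invariance of $f$) yields left $K$-invariance of $f\times g$, while replacing $x$ by $xh$ and using the right $H$-invariance of $g$ yields right $H$-invariance.

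The main obstacle is (iii): we must show $f\times g$ is supported on \emph{finitely many} $(K,H)$-double cosets. By bilinearity it suffices to take $f=c_\lambda$ and $g=d_\mu$. Then $\operatorname{supp}(c_\lambda\times d_\mu)\subseteq K\pi^\lambda K\cdot K\Pi^\mu H$, which is a union of entire $(K,H)$-double cosets. The subset $K\pi^\lambda K\cdot K\Pi^\mu\subseteq G$ is compact, being the product of two compact sets, so its image in the quotient space $G/H$ is compact. On the other hand, the $(K,H)$-double cosets partition $G/H$ into $K$-orbits, each of which is closed (as $K$ is compact) and open (as $K$ is open in $G$). A compact set can meet only finitely many clopen blocks of such a partition, which gives the required finiteness, and hence $c_\lambda\times d_\mu\in\mathcal{H}(G,H,K)$.

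Finally, for the module axioms, associativity $(f_1\ast f_2)\times g = f_1\times(f_2\times g)$ follows from a Fubini argument; all the iterated integrals are over compact sets (the supports of $f_1,f_2$) so that Fubini applies, and the associativity identity then reduces to left-invariance of Haar measure on $G$. The identity axiom $c_0\times g=g$ is a direct calculation using that $c_0=\mathbf{1}_K$ and $g$ is left $K$-invariant, together with the normalization $\operatorname{vol}(K)=1$ implicit in the definition. Putting these pieces together establishes that $\mathcal{H}(G,H,K)$ is a left $\mathcal{H}(G,K)$-module.
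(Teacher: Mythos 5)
Your proof is correct, and for the parts that overlap with the paper's argument (the $(K,K)$- and $(K,H)$-invariance via change of variables in the Haar integral, associativity via Fubini and left-invariance, and the identity axiom for $c_0=\mathbf{1}_K$) you proceed essentially as the paper does. The genuine difference is your step (iii): you verify that $c_\lambda\times d_\mu$ is supported on only \emph{finitely many} $(K,H)$-double cosets, by noting that $\operatorname{supp}(c_\lambda\times d_\mu)\subseteq K\pi^\lambda K\cdot K\Pi^\mu H$, that $K\pi^\lambda K\cdot K\Pi^\mu$ has compact image in $G/H$, and that the $K$-orbits on $G/H$ form a clopen partition (since $K$ is compact open), so a compact set meets only finitely many of them. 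The paper's proof omits this point entirely and only establishes $(K,H)$-invariance, which by itself does not place $f\times g$ in the span of the $d_\nu$'s since that span consists of \emph{finite} linear combinations; so your argument closes a gap that the paper leaves implicit (and that it in effect relies on later, when defining the structure constants $h_{\mu\nu}^{\lambda}$ via $c_\mu\times d_\lambda=\sum_\nu h_{\mu\nu}^\lambda d_\nu$). Conversely, you do not explicitly check the distributivity axioms $(f_1+f_2)\times g=f_1\times g+f_2\times g$ and $f\times(g_1+g_2)=f\times g_1+f\times g_2$, which the paper writes out; these are immediate from linearity of the integral, so this is a cosmetic rather than a substantive omission, but for completeness you should state them.
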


\begin{proof}
For all $ h\in H,k\in K$, 
\begin{align*}
    f\times g(kxh) &=\int_G g(y^{-1}kxh)f(y)dy=\int_G g(y^{-1}kx)f(y)dy\\&=\int_G g(y'^{-1}x)f(y')dy'=\int_G g(y^{-1}x)f(y)dy.
\end{align*}
Given $f_1,f_2\in \mathcal{H}(G,K) \text{ and } g\in \mathcal{H}(G,H,K)$, we have 
\begin{align*}
    f_1\times(f_2\times g)(x)&=\int_G (f_2\times g)(y^{-1}x)f_1(y)dy\\&=\int_G \int_G g(z^{-1}y^{-1}x)f_2(z)f_1(y)dydz\\&=\int_G \int_G g(y'^{-1}x)f_2(z'^{-1})f_1(y'z')dy'dz'.
\end{align*}
The last equality is obtained by  $y'=yz \text{ and } z'=z'^{-1}$.
Moreover, we have \[(f_1 f_2)\times g(x)=\int_G g(y^{-1}x)(f_1f_2)(y)dy=\int_G \int_G g(y^{-1}x)f_1(yz)f_2(z^{-1})dzdy.\] Thus, we have $f_1\times(f_2\times g)=(f_1 f_2)\times g$.

Similarly, for distributivity of $f$, we have
\begin{align*}
((f_1+f_2)\times g)(x) &=\int_G g(y^{-1}x)(f_1+f_2)(y)dy=\int_G (g(y^{-1}x)f_1(y)+g(y^{-1}x)f_2(y))dy\\
&=(f_1\times g+f_2\times g)(x).
\end{align*}
Thus, we have $(f_1+f_2)\times g=f_1\times g+f_2\times g.$

Similarly, for distributivity of $g$, given $g_1,g_2\in \mathcal{H}(G,H,K) \text{ and } f\in \mathcal{H}(G,K)$, we have 
\begin{align*}
f\times(g_1+g_2)(x) & =\int_G (g_1+g_2)(y^{-1}x)(f)(y)dy=\int_G (g_1(y^{-1}x)f(y)+g_2(y^{-1}x)f(y))dy \\
   & =(f\times g_`+f\times g_2)(x).
\end{align*}
Thus, we have $f\times(g_1+g_2)=f\times g_1+f\times g_2.$

Identity for $\mathcal{H}(G,K)$ is $c_0$, i.e., the characteristic function on $K$. Thus, \[c_0\times g(x)=\int_G g(y^{-1}x)c_0(y)dy=\int_K g(y^{-1}x)dy=g(x).\] 
\end{proof}

Given partitions $\lambda,\mu$ of length $n$, the product $c_{\mu}\times d_{\lambda}$ is a linear combination of $d_{\nu}$. Define $h_{\mu\nu}^{\lambda}$ by $c_{\mu}\times d_{\lambda}=\sum_{\nu} h_{\mu\nu}^{\lambda} d_{\nu}$. We have 
\begin{align*}
h_{\mu\nu}^{\lambda} &=(c_{\mu}\times d_{\lambda})(\Pi^{\nu}) \\
   &= \int_G d_{\lambda}(y^{-1}\Pi^\nu)c_{\mu}(y)dy \\
   & =\int_{K\pi^\mu K} d_{\lambda}(y^{-1}\Pi^\nu)dy\\
   &=\sum_{i}\int_K d_{\lambda}(k y_i^{-1}\Pi^\nu)dk\\
   &=\sum_{i}d_{\lambda}(y_i^{-1}\Pi^\nu) \\
\end{align*}
\begin{comment}
$h_{\mu\nu}^{\lambda}=(c_{\mu}\times d_{\lambda})(\Pi^{\nu})=\int_G d_{\lambda}(y^{-1}\Pi^\nu)c_{\mu}(y)dy=\int_{Ky_iK} d_{\lambda}(y^{-1}\Pi^\nu)dy=\sum_{i}\int_K d_{\lambda}(k y_i^{-1}\Pi^\nu)dk=\sum_{i}d_{\lambda}(y_i^{-1}\Pi^\nu) $
\end{comment}
where $K\pi^{\mu}K=\cup_{i} y_{i}K$, and $K$ has measure 1. It follows that $d_{\lambda}(y_i^{-1}\Pi^\nu)=1$ if $y_i^{-1}\Pi^\nu\in K\Pi^\lambda H$ and  $d_{\lambda}(y_i^{-1}\Pi^\nu)=0$ if $y_i^{-1}\Pi^\nu\notin K\Pi^\lambda H$. 
Recall that the Haar measure of $K\pi^\mu K$ is finite; a direct result is that $h_{\mu\nu}^{\lambda}\in \mathbbm{Z}.$ 

In this chapter, we will first give a representative for $y_i$'s (Lemma~\ref{decomp_A}). Then,  $h_{\mu\nu}^{\lambda}$ is equal to the number of $ i$'s such that $y_i^{-1}\Pi^\nu\in K\Pi^\lambda H.$  Before we start Theorem~\ref{mainthm}, we introduce a system of \emph{paired tuple} and associated equivalence relation. The set $W_{\ell,\nu}^\lambda$ denotes the set of all paired tuples of size $\ell$ that are associated with the partitions $\nu$ and $\lambda$, which corresponds to symmetric coweights of elements in $\text{GL}_n(k((z)))$. Explicitly, we have $\ell(\nu)=n$, $m_0(\nu)\geq \sum_{i>1} m_i(\nu)$ and $m_i(\nu)=0$ if $i<0$, similarly for $\lambda$ as well.

Given partitions $\nu$ and $\lambda$ with $\ell(\nu)=\ell(\lambda)=n$, let $w$ be a tuple of length $n$ consisting of $0$'s and $1$'s. We say $w$ is a paired tuple of size $\ell$ associated with $\nu$ and $\lambda$ if the following conditions hold: 
\begin{enumerate}
\item  $\sum_{i}w_i=\ell$. Here $\ell$ is defined as the size of $w$.

\item Pairing is imposed on tuple $w$. There is at most one pairing for any index $i$.  If $\nu_i>1$, the indices $i,n-i+1$ form a pair $(i,n-1+1)$. For index $i$ with $\nu_i=1$, it can be paired with a index $j$ where $w_j=0$ and $\nu_j=0$, written as $(i,j).$ Let $\hat{\nu}=(\hat{\nu}_1,\dots,\hat{\nu}_{n})$ be a tuple obtained from $\nu$ by changing each part $\nu$ according to the pairing of $i$. We define:

$\hat{\nu}_i=
    \begin{cases}
      \nu_i+w_i-w_j, & \nu_i>1 \text{and } (i,j) \text{ is a pair}\\
      2, &\nu_i=1, \text{and } (i,j) \text{ is a pair} \\
      \nu_i+(-1)^{\nu_i}w_i, &   i \text{ is not paired}\\

      0, &\text{otherwise.}\\

    \end{cases}$
    
\item $\hat{\nu}$ is a reordering of $\lambda$.
\end{enumerate}

If $i>1$, denote $\omega_{[i]}=(\omega_{[i]}^{0,0},\omega_{[i]}^{1,1},\omega_{[i]}^{1,0},\omega_{[i]}^{0,1}),$ where \[ 
\omega_{[i]}^{a,b}= | \{ (r,s) : w_r = a, w_s = b, v_r = i \}|. 
\]
We also write $\omega_{[1]}=(\omega_{[1]}^{0,0},\omega_{[1]}^{1,1},\omega_{[1]}^{1,0},\omega_{[1]}^{0,1})$, where
\[
\omega_{[1]}^{a,b} = 
\begin{cases}
| \{\text{pair}(r,s):\nu_{r}=1 \}|, & \text{if } (a,b)=(0,0), \\
| \{r: w_r=\nu_{r}=1 \text{ and }r \text{ not paired} \} |, & \text{if } (a,b)=(1,1), \\
| \{r: w_r=1, \nu_{r}=0 \text{ and  }r \text{ not paired }\} |, & \text{if } (a,b)=(1,0), \\
0, & \text{if } (a,b)=(0,1).\\
\end{cases} 
\]
An equivalence relation is imposed on $w$ and $w'$:  $w\sim w'$ if and only if $\omega_{[i]}=\omega'_{[i]}$ for all $i$.

Next, we give a formula of $h_{\mu\nu}^{\lambda}$ for $\mu$ comprised of $-1$ and $0.$ 
For $a>1$, we denote $h_{-1^{2i+j+k},\{a^n\}}^{\{(a+1)^j,a^{n-j-k},(a-1)^k\}}\text{ as }  h_n^{{i,j,k}} $
and denote $\tilde{h}_a^{\omega_{[a]}}=h_n^{{i,j,k}} q^{-jk-(j+i)(n-i-k)}$  if $\omega_{[a]}=(n-i-j-k,i,k,j)$. We denote $h_{-1^{i+j+k},\{1^{n_2},0^{n_1}\}}^{\{2^i,1^{n_2-i-j+k\}}} \text{ as }\bar{h}_1^{\omega_{[1]}}$ where $\omega_{[1]}=(k,j,i,0).$

In Proposition~\ref{computen} and~\ref{compute1}, we will give a closed formula for $\tilde h,\bar h$. In the rest of this chapter, we write $t_1=n-2 \sum_{i>1}{m_i(\nu)}$.

\begin{theorem}
\label{mainthm}
\[h_{-1^\ell,\nu}^{\lambda}=\sum_{[w]\in W_{\ell,\nu}^{\lambda}/\sim} q^{r([w])} \bar{h}_1^{\omega_{[1]}}\prod_{\omega_{[i]}, i>1} \tilde{h}_i^{\omega_{[i]}}\]

\begin{comment}

where $r([w])=(\sum_{i>1,j\in \{0,1\}} \omega_{[i]}^{j,1})t_1+(\sum_{i>1}\omega_{[i]}^{0,0}-\omega_{[i]}^{1,1})(\sum_{j,k} \omega_{[1]}^{j,k})+\sum_{i<j} n_i n_j+ \sum_{i<j} n_i(\omega_{[j]}^{0,1}-\omega_{[j]}^{1,0})+\sum_i n_i \omega_{[i]}^{0,1}-\frac{1}{2} ((\sum_i \omega_{[i]}^{0,0}-\omega_{[i]}^{1,1})^2-(\sum_i \omega_{[i]}^{0,0} \omega_{[i]}^{0,0}+ \omega_{[i]}^{1,1}\omega_{[i]}^{1,1}))$
\end{comment}

\begin{multline*}
 r([w])=(\sum_{i>1,j\in \{0,1\}} \omega_{[i]}^{j,1})t_1+(\sum_{i>1}\omega_{[i]}^{0,0}-\omega_{[i]}^{1,1})(\sum_{j,k} \omega_{[1]}^{j,k})+\sum_{1<i<j} n_i n_j+ \sum_{1<i<j} n_i(\omega_{[j]}^{0,1}-\omega_{[j]}^{1,0})\\
 +\sum_{i>1} n_i \omega_{[i]}^{0,1}-\frac{1}{2} ((\sum_{i>1} \omega_{[i]}^{0,0}-\omega_{[i]}^{1,1})^2-(\sum_{i>1} \omega_{[i]}^{0,0} \omega_{[i]}^{0,0}+ \omega_{[i]}^{1,1}\omega_{[i]}^{1,1})). \\
\end{multline*}
\end{theorem}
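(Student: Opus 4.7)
The plan is to evaluate the formula $h_{-1^\ell,\nu}^\lambda = \sum_i d_\lambda(y_i^{-1}\Pi^\nu)$ derived just before the theorem, where $K\pi^{-1^\ell}K = \bigsqcup_i y_i K$. First I would invoke Lemma~\ref{decomp_A} to pick representatives $y_i$ indexed by a $0$/$1$ tuple $w \in \{0,1\}^n$ of weight $\ell$ (recording in which rows the factor $z^{-1}$ is placed) together with some ``upper triangular'' off-diagonal parameters. The condition $d_\lambda(y_i^{-1}\Pi^\nu) = 1$ translates to the symmetric coweight of $y_i^{-1}\Pi^\nu$ being a reordering of $\lambda$, which is exactly the role of the tuple $\hat{\nu}$ in the definition of a paired tuple.

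The second step is to analyze how left multiplication by $y_i^{-1}$ modifies the symmetric coweight of $\Pi^\nu$ row by row. Because the symmetric coweight treats rows $r$ and $n-r+1$ as a pair, for an index $r$ with $\nu_r > 1$ the only allowed pairing is $(r, n-r+1)$, which yields $\hat{\nu}_r = \nu_r + w_r - w_{n-r+1}$; for $r$ with $\nu_r = 1$, one may instead pair $r$ with some $j$ such that $\nu_j = 0$, absorbing both the $1$ and the spare $0$ into a new $2$-block, which is why the paired case contributes $\hat{\nu}_r = 2$ and why the unpaired case contributes $\nu_r + (-1)^{\nu_r} w_r$. This verifies that the set $W_{\ell,\nu}^\lambda$ exactly parametrizes the tuples contributing to the sum.

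Next I would show that within a fixed tuple $w$, the remaining off-diagonal degrees of freedom from Lemma~\ref{decomp_A} decouple into a product over the distinct valuation blocks of $\nu$. For each value $i > 1$, the contribution depends only on the counts $\omega_{[i]}^{a,b}$ of $(w_r, w_{n-r+1})$ patterns on that block, and matches the definition $\tilde{h}_i^{\omega_{[i]}}$ as an appropriate Hecke module coefficient on a single block, with the $q^{-jk-(j+i)(n-i-k)}$ factor absorbed into the normalization; likewise the $\nu_r \in \{0,1\}$ rows contribute $\bar{h}_1^{\omega_{[1]}}$. The base cases $\tilde h, \bar h$ are treated in Propositions~\ref{computen} and~\ref{compute1} and may be taken as given.

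The final and most delicate step is the multiplicity $q^{r([w])}$, which must count the number of representatives $y_i$ (including off-diagonal data) equivalent to $w$ under $\sim$ that yield the same symmetric coweight. I would derive each summand of $r([w])$ from a distinct source: the $t_1$ term records free off-diagonal entries in columns of unpaired $0$-valuation rows that meet a ``$1$'' entry of $w$ in some block $i>1$; the products $n_i n_j$ with $1 < i < j$ record cross-block off-diagonal freedom; the terms involving $\omega_{[j]}^{0,1} - \omega_{[j]}^{1,0}$ correct for how different pairing choices shift these column counts; and the final quadratic expression removes the overcounting that appears when several blocks contribute simultaneous shifts. I expect this last bookkeeping, in particular the $-\frac12\bigl((\sum \omega_{[i]}^{0,0}-\omega_{[i]}^{1,1})^2 - \sum((\omega_{[i]}^{0,0})^2 + (\omega_{[i]}^{1,1})^2)\bigr)$ correction arising from the interaction between distinct blocks, to be the main obstacle, since it is where the naive block-by-block product double-counts cross-block pairings.
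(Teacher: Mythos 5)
Your outline reproduces, in essence, the paper's proof of Lemma~\ref{weak}, not of Theorem~\ref{mainthm} itself. The critical unjustified step is your third one: the claim that "the remaining off-diagonal degrees of freedom from Lemma~\ref{decomp_A} decouple into a product over the distinct valuation blocks of $\nu$." In the paper this decoupling is established by explicit row/column reduction (the claim inside the proof of Lemma~\ref{weak}), and the valuation estimates there — e.g.\ that the error introduced in the $I_{i_j}$ diagonal block after clearing the $I_{i_1}$ block has valuation at least $2(i_j-1)-(i_1+1)$, which must exceed $i_j+1$ — only close when distinct parts of $\nu$ exceeding $1$ differ by at least $5$. For general $\nu$ (parts differing by $1$ or $2$, which is exactly where the pairing structure of $W_{\ell,\nu}^\lambda$ is interesting), the naive block-by-block reduction does not obviously terminate with the blocks unperturbed, and you give no argument for why it would.

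The paper's actual proof of Theorem~\ref{mainthm} is a transfer argument that you are missing entirely: it introduces the paired semi-tableaux of Proposition~\ref{young}, which give $h_{-1^\ell,\nu}^{\lambda}=\sum_{\omega}\sum_{\tau\in T_\omega}wt(\tau)$ for arbitrary $\nu$, and then observes that replacing $\nu$ by the spread-out partition $\tilde{\nu}$ (adding multiples of $5$ to the distinct parts) induces a weight-preserving correspondence $\tau\mapsto\tau'$ between tableaux, so that $h_{-1^\ell,\nu}^{\lambda}=\sum_{\gamma\in\Gamma}h_{-1^\ell,\tilde{\nu}}^{\gamma}$ and the general case follows from the already-proved separated case. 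Without either (a) a direct proof that the matrix decoupling holds for closely spaced parts, or (b) some substitute for this tableaux transfer, your argument establishes only the restricted statement of Lemma~\ref{weak}. Your step 4 on $r([w])$ is consistent with the bookkeeping in the paper but inherits the same restriction, since the count of free off-diagonal entries is extracted from the same reduction procedure.
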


Define $\mathcal{A}_a$ as the set that consists of all upper triangular matrices $X\in \text{GL}_n(k[[z]])$ with the following properties:
\begin{enumerate}
\item $X_{i,i}=1 \text{ or } z,$
\item for $i$ such that $X_{i,i}=z$,  we have $X_{i,j}=0$ if $j>i$ and $X_{j,i}\in k $ if $j<i$,
\item for i such that $X_{i,i}=1$, we have $X_{j,i}=0$ if $j<i $ and $X_{i,j}\in k $ if $j>i$,
\item the number of $z$ on the diagonal is $a$. 
\end{enumerate}
\begin{comment}

1,$X_{i,i}=1 \text{ or } z$. 2,for i such that $X_{i,i}=z$,  we have $X_{i,j}=0$ if j>i and $X_{j,i}\in k $ if j<i. 3,for i such that $X_{i,i}=1$, we have $X_{j,i}=0$ if j<i and $X_{i,j}\in k $ if j>i. 4,the number of z on the diagonal is a. 
\end{comment}

\begin{lemma}
\label{decomp_A}
Given $ A \in G$ with $\rho(A)=(1^a)$, we have $KAK=\bigsqcup_{X\in\mathcal{A}_a} KX$. 

\end{lemma}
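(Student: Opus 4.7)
The plan is to establish three claims: (a) every $X \in \mathcal{A}_a$ lies in $KAK$; (b) every $Y \in KAK$ has some $X \in \mathcal{A}_a$ in its left $K$-coset; and (c) distinct elements of $\mathcal{A}_a$ represent distinct left cosets. For (a), the key observation is that whenever $X_{i,i}=1$ the $i$-th column of $X$ equals the standard basis vector $e_i$ (the entries above the diagonal in that column vanish by condition~(3) and those below by upper-triangularity). Consequently the right $K$-operation that replaces column $j$ by $\mathrm{col}_j - X_{i,j}\,\mathrm{col}_i$ clears the single entry $X_{i,j}$ without touching anything else; iterating over all pairs $(i,j)$ with $i<j$, $X_{i,i}=1$, $X_{j,j}=z$ reduces $X$ to $\mathrm{diag}(X_{1,1},\ldots,X_{n,n})$, a permutation of $\pi^{(1^a)}=A$, so $X \in KAK$.

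For (b), I will induct on $n$, handling one column at a time. Writing $Y = k_1 A k_2$ and using that the first column of $k_2$ contains a unit entry, the minimum valuation in the first column of $Y$ lies in $\{0,1\}$. I swap a row attaining this minimum to position $(1,1)$, scale row $1$ to normalize the pivot to $1$ or $z$, and clear the rest of column $1$ by row operations that are integral thanks to the minimality of the pivot's valuation. This yields a block form $\left(\begin{smallmatrix}\epsilon & r \\ 0 & Y'\end{smallmatrix}\right)$ with $\epsilon \in \{1,z\}$. Projecting the inclusion $z\Lambda_0 \subset Y\Lambda_0$ (which holds because $\mathrm{coker}(Y) \cong k^a$) onto the last $n-1$ coordinates yields $z\Lambda_0^{n-1}\subset Y'\Lambda_0^{n-1}$, and a length count gives $\rho(Y')=(1^{a-v(\epsilon)})$ where $v(\epsilon) = \mathrm{val}(\epsilon)$; the inductive hypothesis then produces $k' \in K_{n-1}$ and $X' \in \mathcal{A}_{a-v(\epsilon)}$ with $Y'=k'X'$, and conjugating by $\mathrm{diag}(1,k'^{-1})\in K$ replaces the lower block with $X'$. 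To normalize the remaining row $r$, I sweep $i=2,\ldots,n$: at stage $i$ I add a suitable multiple of row $i$ of the lower block (which is either $z\,e_i^T$, or $e_i^T + \sum_{j>i,\,X_{j,j}=z} X_{i,j}\,e_j^T$) to row $1$, bringing $r_{i-1}$ to its target value ($0$ if $\epsilon=z$ or $X_{i,i}=1$; a constant in $k$ if $\epsilon=1$ and $X_{i,i}=z$). Since row $i$ vanishes to the left of position $i$, these stages never disturb previously normalized entries.

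For (c), I will use the bijection between left cosets $K\backslash G$ and lattices in $k((z))^n$ sending $Kg$ to the row lattice $R(g):=\sum_i k[[z]]\,\mathrm{row}_i(g)$. For $X \in \mathcal{A}_a$, rows with $X_{i,i}=z$ contribute $z\,e_i^T$ to $R(X)$ and those with $X_{i,i}=1$ contribute $e_i^T + \sum_{j>i,\,X_{j,j}=z} X_{i,j}\,e_j^T$, so $z\Lambda_0 \subset R(X)\subset \Lambda_0$ and the quotient $R(X)/z\Lambda_0 \subset k^n$ is the $(n-a)$-dimensional subspace given in reduced row echelon form with pivot columns $\{i : X_{i,i}=1\}$ and non-pivot entries $\{X_{i,j}\}$. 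Reduced row echelon form is a unique normal form for subspaces, so $X$ is recoverable from $R(X)$, giving disjointness.

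The delicate point I expect to be the main obstacle is verifying the single left-to-right sweep in (b), particularly in the subcase $\epsilon=z$ and $X_{i,i}=z$: clearing $r_{i-1}$ by adding an integral multiple of row $i = z\,e_i^T$ requires $r_{i-1} \in z k[[z]]$ at the moment it is processed. This divisibility ultimately reflects the constraint $rX'^{-1}\in k[[z]]^{1\times(n-1)}$ forced by $z\Lambda_0 \subset Y\Lambda_0$, and establishing it requires a careful bookkeeping showing that earlier stages (using rows $2,\ldots,i-1$) have already removed the non-$z$-divisible part of $r_{i-1}$.
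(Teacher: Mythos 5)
Your argument is correct in outline but takes a genuinely different route from the paper. The paper proves only pairwise disjointness, by writing down $Y^{-1}$ explicitly for $Y\in\mathcal{A}_a$ and checking that $XY^{-1}\in K$ forces $X=Y$, and then gets exhaustiveness for free from a cardinality count: the measure (number of left cosets) of $KAK$ is the Gaussian binomial $\binom{n}{a}$, which equals $|\mathcal{A}_a|$. Your proof is constructive and self-contained where the paper's is not: your step (a) actually verifies $\mathcal{A}_a\subset KAK$ (which the paper silently assumes), your step (b) replaces the measure computation by an explicit column-by-column reduction, and your step (c) replaces the $XY^{-1}$ computation by the cleaner observation that $\mathcal{A}_a$ is exactly the set of reduced-row-echelon normal forms for the $(n-a)$-dimensional subspaces of $\Lambda_0/z\Lambda_0$. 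The trade-off is that the paper's counting argument is short but leans on an external fact about the Haar measure of $KAK$, while yours is longer but elementary and reusable (the lattice picture in (c) is essentially what makes the later lemmas work).

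One remark on the step you flag as delicate in (b). The divisibility $r_{i-1}\in zk[[z]]$ in the case $\epsilon=z$ does hold, and the cleanest way to see it is not by tracking the sweep but by the lattice argument you already set up in (c): writing $L'$ for the $k[[z]]$-row-span of $X'$, the inclusion $z\Lambda_0\subset R(Y)$ forces, in the case $\epsilon=z$, that $(0,r)=(z,r)-ze_1^T\in R(Y)$, hence $r\in L'$, so $r$ reduces to $0$ modulo the rows of $X'$; in the case $\epsilon=1$ one only needs $r\in\Lambda_0'$, and $\Lambda_0'/L'$ is the direct sum of copies of $k$ sitting in the non-pivot columns, which gives exactly the target normal form. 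In fact, once (c) is phrased as a bijection $X\mapsto R(X)$ from $\mathcal{A}_a$ onto the lattices $L$ with $z\Lambda_0\subset L\subset\Lambda_0$ of colength $a$, surjectivity follows immediately (every $Y\in KAK$ has $R(Y)$ of this form), and the inductive sweep in (b) can be dropped entirely.
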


\begin{proof}
Given $ X,Y\in\mathcal{A}_a,$ we first show that $KX\cap KY$ is empty unless $X=Y$. It suffices to show $XY^{-1}\notin K $. From direct computation, for $Y$ with entries $Y_{i,j}$, $Y^{-1}$ has ${Y^{-1}}_{i,j}=-Y_{i,j}/z$ if $i\neq j$ and ${Y^{-1}}_{i,i}=1/Y_{i,i}$. Since $X,Y$ are both upper triangular, on the diagonal of $XY^{-1}$, each entry is in $\{1,z,1/z\}$ and $XY^{-1}$ is also upper triangular. Thus, $X_{i,i}=Y_{i,i}$. Let $I = \{ i: X_{i,i}=z \}$.  For $j\in I,i\notin I,i<j$, we have \[{XY^{-1}}_{i,j}=X_{i,i}(-Y_{i,j}/z)+X_{i,j}(1/Y_{j,j})=-Y_{i,j}/z+X_{i,j}/z.\] Thus ${XY^{-1}}_{i,j}\in k[[z]] $ iff $X_{i,j}=Y_{i,j}.$ Recall that the Haar measure of $KAK$ is $ \binom{n}{a}$. Also, we have $\binom{n}{a}=|\mathcal{A}_a|$. Thus, $\mathcal{A}_a$ is a representative of the coset.
\end{proof}
Lemma~\ref{continuity} and~\ref{ktheory} are useful in the proof of Lemma~\ref{decomp_01},~\ref{decomp_flip},~\ref{weak}. 
\begin{lemma}
\label{continuity}
For any $A\in \text{GL}_n(k((z)))$ with $\rho(A)=\{\mu_1,...\mu_n\}$ where $\mu_1<a$ and $\mu_n\ge 0$, and  $A' \in \text{GL}_n(k((z)))$ with $A'-A\in M_n(z^ak[[z]])$, we have $\rho(A)=\rho(A')$.

\end{lemma}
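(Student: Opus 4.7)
The plan is to reduce to the case $A = \pi^\mu$ by exploiting the $K \times K$-invariance of $\rho$, and then exhibit $A'$ as a right $K$-translate of $\pi^\mu$.

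First, I would use $\rho(A) = \mu$ to write $A = g\pi^\mu h$ for some $g,h \in K$. Then
\[
A' = g\bigl(\pi^\mu + g^{-1}(A'-A)h^{-1}\bigr)h,
\]
and the matrix $B := g^{-1}(A'-A)h^{-1}$ still lies in $M_n(z^ak[[z]])$, because left or right multiplication by an element of $K$ preserves this entry-valuation condition (the inverses $g^{-1},h^{-1}$ have all entries in $k[[z]]$, so each entry of $B$ is a $k[[z]]$-linear combination of entries of $A'-A$). Since $\rho$ is constant on the double cosets $KxK$, it suffices to show $\rho(\pi^\mu + B) = \mu$.

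Next I would factor $\pi^\mu + B = \pi^\mu(I + \pi^{-\mu}B)$. Each entry of $\pi^{-\mu}B$ has valuation bounded below by $a-\mu_i$, where $\mu_i$ runs over the diagonal exponents of $\pi^\mu$; since $\mu_1 < a$ and $\mu_1$ is the largest part, every such entry has strictly positive valuation. Hence $\pi^{-\mu}B \in M_n(zk[[z]])$, so $I + \pi^{-\mu}B$ reduces mod $z$ to the identity matrix, which is a unit in $\mathrm{GL}_n(k)$. Therefore $I + \pi^{-\mu}B \in \mathrm{GL}_n(k[[z]]) = K$, which gives $\pi^\mu + B \in \pi^\mu K \subseteq K\pi^\mu K$, and consequently $\rho(\pi^\mu + B) = \mu$.

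There is no serious obstacle; the key observation is that the hypothesis $\mu_1 < a$ is calibrated precisely so that $\pi^{-\mu}B$ lands in the maximal ideal $zk[[z]]$, which in turn Hensel-lifts $I + \pi^{-\mu}B$ to a unit of $k[[z]]$. The assumption $\mu_n \ge 0$ plays no role in this logical chain and is presumably retained for compatibility with the later applications in Lemmas~\ref{decomp_01}, \ref{decomp_flip}, and~\ref{weak}, where $A$ will in fact have entries in $k[[z]]$.
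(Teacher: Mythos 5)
Your proof is correct, and it takes a genuinely different route from the paper's. The paper works with the characterization of $\rho(A)$ via the minimal valuations $\rho_l$ of the $l\times l$ minors and invokes the expansion $\det(C+D)=\sum_r\sum_{\phi,\psi}(-1)^{s(\phi)+s(\psi)}\det(C[\phi|\psi])\det(D(\phi|\psi))$ to show that every $l\times l$ minor of $A'$ still has valuation $\geq\rho_l$ while the extremal one keeps valuation exactly $\rho_l$; the hypothesis $\mu_1<a$ enters there through the inequality $(l-r)a+\rho_r\geq\rho_l$. You instead reduce to $A=\pi^\mu$ by the Cartan decomposition and the $K\times K$-invariance of both $\rho$ and the condition $A'-A\in M_n(z^ak[[z]])$, then observe that $\pi^{-\mu}B\in M_n(zk[[z]])$ — again precisely because $\mu_1<a$ bounds every diagonal exponent — so that $I+\pi^{-\mu}B$ has determinant in $1+zk[[z]]\subseteq k[[z]]^*$ and hence lies in $K$, giving $A'\in \pi^\mu K$ up to the outer $K$-factors. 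Your argument is shorter, avoids the minor computation entirely, and needs nothing beyond the definition of $K$ (one does not even need Hensel's lemma, only that the determinant is a unit); the paper's argument, in exchange, records the finer fact that each individual minor's valuation is controlled, which is in the spirit of how $\rho$ is used elsewhere but is not needed for the statement. Your observation that $\mu_n\geq 0$ is never used is also accurate — the paper's own proof does not use it either — so it is indeed just an ambient assumption carried along from the later applications.
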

\begin{proof}

\begin{comment}

Denote the minimal valuation of minors of $1*1 \text{ to }n*n$ of A $a_1,...a_n$
Claim:The minimal valuation of k minor of A' is $a_k$ and  for the same positional k dimensional minor of A,A' wrt $f,f', v(f-f')\geq a+a_{k-1}$
We do induction on the dimension of minor. 
The base step for k=1: directly from the definition of $A,A'$
Inductive step for k: for any k minor of A f and the corresponding minor of A',f', $v(f-f')=$
\end{comment}

We recall a classical result on determinant from \cite{marcus1990determinants} : \[\det(A+B)=\sum_r \sum_{\alpha, \beta\in I_r} (-1)^{s(\alpha)+s(\beta)} \det(A[\alpha|\beta])\det(B(\alpha|\beta))\]
Here $A$ and $B$ are $n$ by $n$ matrices, the outer sum on $r$ is over $\{0,...n\},$ $ I_r $ consists of all subsets in $\{1,...,n\}$ with size $r$, $A[\alpha|\beta]$ (square brackets) is the $r$ by $ r$ submatrix of $A$ in rows $\alpha$ and columns $\beta$, and $B(\alpha|\beta)$ (parentheses) is the $n-r$ by $n-r$ submatrix of $B$ in rows complementary to $\alpha$ and columns complementary to $\beta$. 

Denote $\rho_l=\sum_{i=n-l+1}^n \nu_i$ and $B=A-A'$. It suffices to show that gcd of $l$ by $l$ submatrix of $A+B$ is the same as that of $A$.  It suffices to prove the claim: for any $l<n$ and $\alpha,\beta\in I_l$, we have $v((A+B)[\alpha|\beta])\geq \rho_l$; pick $\alpha,\beta\in I_l $ such that $v(A[\alpha|\beta])=\rho_l$,$v((A+B)[\alpha|\beta])=\rho_l$. 

We first prove the second half of the claim. For the sake of simplifying the notation, denote $C=A[\alpha|\beta],D=B[\alpha|\beta]$. For $r<l \text{ and for all } \phi,\psi\in I_r$, nonzero summand  $(-1)^{s(\phi)+s(\psi)} \det(C[\phi|\psi])\det(D(\phi|\psi))$ has valuation $\geq(l-r)a+\rho_r\geq \rho_l.$ ($\nu_1<a,\rho_l-\rho_r\leq (l-r)a.)$ 
For $r=l \text{ and for all }  \phi,\psi\in I_r$, $(-1)^{s(\phi)+s(\psi)} \det(C[\phi|\psi])\det(D(\phi|\psi))$ has valuation equal to $\rho_l.$ For first half of the claim, the argument is the same as the $r<l$ part: For $r\leq l \text{ and for all }  \phi,\psi\in I_r$, nonzero summand  $(-1)^{s(\phi)+s(\psi)} \det(C[\phi|\psi])\det(D(\phi|\psi))$ has valuation $\geq(l-r)a+\rho_r\geq \rho_l$ (Note that $\nu_1<a,\rho_l-\rho_r\leq (l-r)a).$ 
\end{proof}
\begin{lemma}
\label{ktheory}
Denote the subgroup generated by transvection matrices, i.e., all $\mathbbm{1}_{(i,j)}(f)\text{ with }i\neq j$ and $v(f)\geq0$, as $E\subset K$. For any $g_0\in G$, there exists $k_1\in K$ and $e_2\in E$ such that $k_1g_0e_2=\Lambda$, where $\Lambda$ is a diagonal matrix of the form with diagonal entries $z^a$ and valuations of diagonal entries correspond to the dominant coweight of $g_0$.
\end{lemma}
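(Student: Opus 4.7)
The plan is to combine the Smith normal form over the DVR $k[[z]]$ with the classical $K$-theoretic fact that $\text{SL}_n$ of a local ring is generated by elementary matrices (which is presumably what motivates the label of the lemma).

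First, I would consider the $k[[z]]$-lattice $L := g_0 \cdot k[[z]]^n \subset k((z))^n$. By the elementary divisor theorem for pairs of lattices in $k((z))^n$, there exist a $k[[z]]$-basis $u_1,\dots,u_n$ of $k[[z]]^n$ and integers $a_1 \ge \cdots \ge a_n$ --- the dominant coweight of $g_0$ --- such that $z^{a_1}u_1,\dots,z^{a_n}u_n$ is a $k[[z]]$-basis of $L$. Let $U \in K$ be the matrix whose columns are the $u_i$ and set $\Lambda := \text{diag}(z^{a_1},\dots,z^{a_n})$. The columns of $V := U\Lambda$ then form a $k[[z]]$-basis of $L$; since the columns of $g_0$ are also such a basis, the change-of-basis matrix $M := V^{-1}g_0$ lies in $\text{GL}_n(k[[z]]) = K$. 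At this point $g_0 = U\Lambda M$, which is the desired form except that $M$ may fail to lie in $E$.

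To upgrade $M \in K$ to an element of $E$, I use that $\det(M) \in k[[z]]^\times$ and replace $u_1$ by $\det(M)\cdot u_1$; this is still part of a $k[[z]]$-basis of $k[[z]]^n$, and it multiplies $\det(V)$ by $\det(M)$, producing a new change-of-basis matrix $M'$ with $\det(M') = 1$. Since $k[[z]]$ is local, $\text{SL}_n(k[[z]])$ is generated by elementary transvections, so $\text{SL}_n(k[[z]]) = E$ and hence $M' \in E$. Writing $U'$ for the rescaled $U$ (still in $K$), I set $k_1 := (U')^{-1} \in K$ and $e_2 := (M')^{-1} \in E$, and these satisfy $k_1 g_0 e_2 = \Lambda$.

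The main obstacle, to the extent there is one, is the generation statement $\text{SL}_n(k[[z]]) = E$, which I would either cite or verify directly by Gaussian elimination: given $s \in \text{SL}_n(k[[z]])$, the determinant condition forces some entry of the first column to be a unit (otherwise every summand in the cofactor expansion of $\det s$ would lie in the maximal ideal $(z)$), after which row transvections bring a unit into position $(1,1)$, clear the rest of the first column and first row, and reduce to the $n-1$ case by induction. Everything else in the argument is standard lattice bookkeeping once the elementary divisor theorem is invoked.
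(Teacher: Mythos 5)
Your argument is correct and follows essentially the same route as the paper: Cartan decomposition (in your formulation, the elementary divisor theorem for lattices) to get $g_0=U\Lambda M$, absorption of the unit determinant into the $K$-factor so that the right factor lands in $\text{SL}_n(k[[z]])$, and the identification $\text{SL}_n(k[[z]])=E_n(k[[z]])$ — which the paper justifies via $k[[z]]$ being Euclidean and you justify via it being local; both are valid for this DVR. Your write-up simply fills in the bookkeeping that the paper leaves implicit.
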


\begin{proof}
Let $R$ be a commutative ring. By~\cite[Theorem~4.3.9]{hahn2013classical}, if $R$ is a Euclidean domain, then $E_n(R)=\text{SL}_n(R)$, where $E_n(R) $ is the subgroup of $\text{SL}_n(R)$ generated by transvections (also called elementary matrices).

Take $R=k[[z]]$. We notice that $\text{GL}_n(k[[z]])=(k[[z]]^*)\text{SL}_n(k[[z]])$. Cartan decomposition leads to $k_1g_0e_2=\Lambda$.
\end{proof}
In the proof of Lemma~\ref{decomp_01},~\ref{decomp_flip},  and~\ref{weak}, we fix matrix $B$ as follows: for $i$ with $\nu_i>1$, we have $B_{i,i}=1,B_{i,n-i+1}=z, B_{n-i+1,n-i+1}=z^{\nu_{i}}$; if $(n-t_1)/2 < i\leq m_0(\nu)$, then $B_{i,i}=1$; if $m_0(\nu)< i < m_0(\nu)+m_1(\nu)$, then $B_{i,i}=z$; other entries are $0$.  
Let $c_i$ denote column $i$ and $r_i$ denote row $ i$. Lemma~\ref{weak} is a weak version of Theorem~\ref{mainthm}.

\begin{lemma}
\label{decomp_01}
For any $X,X'\in\mathcal{A}_a$ satisfying $X_{i,j}=X'_{i,j}$ for $0<i,j\leq\frac{n-t_1}{2}$ or $\frac{n-t_1}{2}<i,j\leq\frac{n+t_1}{2}$  or  $\frac{n+t_1}{2}<i,j\leq n$,  we have $\sigma(XB)=\sigma(X'B)$.
\end{lemma}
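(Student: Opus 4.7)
The plan is to reduce to a block-diagonal case: let $X_0$ be obtained from $X$ by setting to zero every entry $X_{i,j}$ with $i,j$ in different diagonal blocks, and show $\sigma(XB)=\sigma(X_0 B)$; applying the same argument to $X'$ then yields $\sigma(X'B)=\sigma(X_0 B)=\sigma(XB)$. Factor $X=X_0(I+F)$ with $F=X_0^{-1}(X-X_0)$ strictly off-block upper triangular. Since there are only three blocks and $F$ maps a lower-indexed block to a higher-indexed one, $F$ is nilpotent of index at most three (with $F^2$ supported entirely in the (I,III) positions). Multiplying on the right gives $XB=(X_0 B)\cdot(B^{-1}(I+F)B)$, reducing the problem to writing $B^{-1}(I+F)B$ as a product $k'\cdot h$ with $h\in H$ and $k'$ in the conjugate $(X_0 B)^{-1}K(X_0 B)$.

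I would process $F$ one elementary off-block contribution $\alpha E_{ij}$ at a time, classified by the block pair (I,II), (II,III), (I,III). Using the special form of $B$ (mostly diagonal, with $B_{i,n-i+1}=z$ only for $i$ in block I), a direct computation gives $B^{-1}E_{ij}B=(B_{jj}/B_{ii})E_{ij}$ for each off-block $(i,j)$ with $i<j$. When the prefactor lies in $k((z^2))$, the contribution is absorbed on the right into $H$. When the parity is wrong, a compensating left multiplication by a carefully chosen transvection in $K$---using a $z$ already present in the diagonal of $X_0 B$ in the appropriate row---corrects the mismatch and shifts the error into a residual term supported in $z^N M_n(k[[z]])$ for large $N$.

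The main obstacle is the bookkeeping: the (I,III) part of $F$ interacts with the antidiagonal $z$'s of $B$, and the operations used to clear (I,III) entries can reintroduce entries in the (I,II) or (II,III) blocks through the quadratic term $F^2$ in $(I+F)^{-1}$. To avoid this cross-talk I plan to process the entries in the order (II,III) first, then (I,II), and finally (I,III), checking at each step that the compensating $K$- and $H$-operations do not spoil the previously cleared blocks. After all three families are absorbed, the residual perturbation lies in a high-power $z$-submodule, and Lemma~\ref{continuity}, applied to a principal subblock with sufficiently small dominant coweight, guarantees that this perturbation does not alter the $K$-$H$ double coset. Combining these produces the desired factorization $XB=k(X_0 B)h$ with $k\in K$ and $h\in H$, and hence $\sigma(XB)=\sigma(X_0 B)$.
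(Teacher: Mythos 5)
Your overall framework is the same as the paper's: reduce to the block\nobreakdash-diagonal representative $X_0$, and absorb the off-block entries of $X$ into a left factor in $K$ and a right factor in $H$, i.e.\ produce $XB=k(X_0B)h$. The computation $B^{-1}E_{ij}B=(B_{jj}/B_{ii})E_{ij}$ for off-block $(i,j)$ is also correct, and the even-parity case is indeed immediate. The problem is that the odd-parity case is where the entire content of the lemma lives, and your treatment of it has a genuine gap. First, you give no mechanism for why the compensating transvection leaves only a residual in $z^NM_n(k[[z]])$ with $N$ arbitrarily large; the offending term is $cz^mE_{ij}$ with $m=v(B_{jj})$ fixed, and a single correction does not obviously push the error to higher and higher order. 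In the paper the cancellations are \emph{exact}: the lever is the antidiagonal entry $Y_{i,n-i+1}=z$ of $X_0B$ (or the fact that row $n-i+1$, resp.\ row $n-j+1$, of $X_0B$ has essentially one nonzero entry), and which row one borrows from depends on a case split on whether $X_{n-j+1,n-j+1}$ equals $1$ or $z$ --- a distinction your proposal never makes, although the two cases require different (and differently ordered) corrections.

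Second, even granting a high-order residual, the appeal to Lemma~\ref{continuity} does not close the argument: that lemma asserts $\rho(A)=\rho(A')$, i.e.\ it controls the $K$\nobreakdash-$K$ double coset, whereas Lemma~\ref{decomp_01} is a statement about $\sigma$, the $K$\nobreakdash-$H$ double coset. Two matrices with the same elementary divisors need not lie in the same $K\Pi^\lambda H$ coset, and the paper contains no continuity statement for $\sigma$; Lemma~\ref{continuity} is only ever used later, after $\sigma(XB)$ has already been rewritten as $\rho$ of a smaller matrix (Lemmas~\ref{decomp_flip} and~\ref{weak}). Finally, your proposed processing order (II,III), (I,II), (I,III) reverses the paper's dependencies: the paper's Step~6, which clears the (II,III) block, explicitly re-invokes the (I,III) machinery of Steps~3 and~5, so the (I,III) antidiagonal entries must be normalized first. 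The cross-talk you flag as "the main obstacle" is real, and resolving it is exactly the six-step bookkeeping that constitutes the proof; as written, your proposal defers it rather than carrying it out.
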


\begin{proof}
It suffices to prove that the lemma holds if $X'$ satisfies the extra condition: $X'_{i,j}=0$ for ($\frac{n-t_1}{2}\geq i$ and $j>\frac{n-t_1}{2}$) or ($\frac{n-t_1}{2}<i\leq\frac{n+t_1}{2}$ and $j>\frac{n+t_1}{2}$).
We write $Y=XB$. We will do allowed row/column operations on $Y$ to prove this lemma.
This is a 6-step matrix operations procedure:

Step 1: For any $i\leq\frac{n-t_1}{2}$, there exist allowed operations such that $X_{i,n-i+1}$ can be reduced to $0$.

For $j\leq\frac{n-t_1}{2}$ and $i\neq j$, we do column operation to make $Y_{i,j}=0$. Thus, the first $\frac{n-t_1}{2}$ by $\frac{n-t_1}{2}$ submatrix of $Y$ is diagonal.
If $X_{i,i}=1$, $Y_{i,n-i+1}=z+X_{i,n-i+1}B_{n-i+1,n-i+1}$ can be reduced to $z$: It suffices to consider $X_{i,n-i+1}\neq 0,$  which implies $X_{n-i+1,n-i+1}=z.$ We write $B_{n-i+1,n-i+1}$ as $z^a$. If $2|a$, then $c_{n-i+1}$ replaced by $c_{n-i+1}-X_{i,n-i+1}B_{n-i+1,n-i+1}c_{i}$ will make $Y_{i,n-i+1}$ to $z$. If $2|a+1$, we replace $c_{n-i+1}$ by $c_{n-i+1}/(1+X_{i,n-i+1}B_{n-i+1,n-i+1}/z)$. Recall that on row $n-i+1$ the only nonzero term in $Y$ is $Y_{n-i+1,n-i+1}$, thus by row operations from row $n-i+1$ to all other rows, $Y_{l,n-i+1}/(1+X_{i,n-i+1}B_{n-i+1,n-i+1}/z)\to Y_{l,n-i+1}$ for all $l\neq n-i+1$. If $X_{i,i}=z$, $Y_{i,n-i+1}=z^2+X_{i,n-i+1}B_{n-i+1,n-i+1}$ can be reduced to $z^2$: similarly as $X_{i,i}=1$.

Step 2: For any $i\leq\frac{n-t_1}{2}$ and $\frac{n-t_1}{2}<j\leq\frac{n+t_1}{2}$ , there exist allowed operations such that $X_{i,j}$ can be reduced to $0$.

It suffices to consider $X_{i,j}\neq 0,$ which implies $X_{j,j}=z$,  $X_{i,i}=1.$  If $B_{j,j}=1$, $c_j$ can be replaced by $c_j-X_{i,j}c_i$. If $B_{j,j}=z$, $c_j$ can be replaced by $c_j-X_{i,j}c_{n-i+1}$. Notice that on column $n-i+1$, $Y_{i,n-i+1}=z$ and all other entries in $Y$'s column $n-i+1$ are 0 or of valuation $z^a,z^{a+1}$ if we write $B_{n-i+1,n-i+1}=z^a$.  On row $j$ the only nonzero term in $Y$ is $Y_{j,j}=z^2$. Thus,  by row operations from row $j$ to all other rows, $Y_{l,j}-X_{i,j}Y_{l,n-i+1}\to Y_{l,j}$ for all $l\neq i$.

Step 3: For any $i,j\leq\frac{n-t_1}{2}$ and $X_{i,i}=z$  and $i\neq j$ , there exist allowed operations such that $X_{j,n-i+1}$ can be reduced to 0, i.e., $Y_{j,n-i+1}=X_{j,i}B_{i,n-i+1}+X_{j,n-i+1}B_{n-i+1,n-i+1}$ reduces to $X_{j,i}z$. 

It suffices to consider $X_{j,n-i+1}\neq 0,$ which implies $X_{n-i+1,n-i+1}=z, X_{j,j}=1.$ The only two nonzero elements in $Y$'s row i are $Y_{i,i}=z,Y_{i,n-i+1}=z^2$. We write $B_{n-i+1,n-i+1}$ be $z^{a}$. If $2|a$, then $c_{n-i+1}$ replaced by $c_{n-i+1}-X_{j,n-i+1}B_{n-i+1,n-i+1}c_{j}$ will make $Y_{j,n-i+1}$ to $X_{j,i}z$. If $2| 1+a$, we replace $r_{j}$ by $r_j-X_{j,n-i+1}B_{n-i+1,n-i+1}/z^2$ ($B_{n-i+1,n-i+1}$ has valuation $\geq 2.$). Then a column operation $c_i=c_i+c_jX_{j,n-i+1}B_{n-i+1,n-i+1}/z^2$ will keep all other terms the same as before.

Step 4: For any $i\leq\frac{n-t_1}{2}$, $j>\frac{n+t_1}{2}, X_{n-j+1,n-j+1}=1 \text{ and } i<n-j+1$  , there exist allowed operations such that $X_{i,j}$ can be reduced to 0. 

Starting from $j=\frac{n+t_1}{2}\to n$, for all eligible $i$ satisfying the above condition, the following algorithm would remove $X_{i,j}.$ We write $B_{j,j}$ be $z^{a}$.
If $2|a$, then $c_{j}$ replaced by $c_{j}-X_{i,j}B_{j,j}c_{i}$ will make $Y_{i,j}$ to $0$.
If $2|a+1$, we replace $r_{i}$ by $r_i-r_{n-j+1}X_{i,j}B_{j,j}/z$( $B_{j,j}$ has valuation $\geq 2$). 
Recall on row $n-j+1$ of $Y$, $Y_{n-j+1,n-j+1}=1,Y_{n-j+1,j}=z;$ if $X_{n-l+1,n-l+1}=z, \frac{n+t_1}{2}<l<j$ ($v(Y_{n-j+1,l})=1 \text{ or } \infty)$, then $Y_{n-j+1,l}=X_{n-j+1,n-l+1}B_{n-l+1,l}$ ; if $X_{l,l}=z, l>j$. ($v(Y_{n-j+1,l})\geq 2 \text{ or } \infty)$, then $ Y_{n-j+1,l}=X_{n-j+1,l}B_{l,l}$.
Then we need to do the next three steps to make all other entries in row $i $ remaining the same.
\begin{enumerate}
    \item $ A$ column operation $c_{n-j+1}=c_{n-j+1}+c_i X_{i,j}B_{j,j}/z$. 
    \item If $X_{n-l+1,n-l+1}=z, \frac{n+t_1}{2}<l<j$, we redo step 3 to eliminate terms in $Y_{i,l}$ that is caused by the $r_{n-j+1}$ row operation.
    \item If $X_{l,l}=z$, $l>j$, we do row operation $r_i+r_l X_{i,j}B_{j,j}Y_{i,l}/(zY_{l,l})$ (row $l$ has only one nonzero term $Y_{l,l}$). 
\end{enumerate}
Notice that for step 4, we on purpose make current $Y$ and original $Y$ only differ by one element for each $i,j$. 

Step 5: For any $i\leq\frac{n-t_1}{2}$, $j>\frac{n+t_1}{2}, X_{n-j+1,n-j+1}=1 \text{ and } i>n-j+1$, there exist allowed operations such that $X_{i,j}$ can be reduced to 0. 

Starting from $j=n\to \frac{n+t_1}{2}$, for all eligible $i$ satisfying above condition, the following algorithm would remove $X_{i,j}$. We write $B_{j,j}$ be $z^{a}$.
If $2|a$, then $c_{j}$ replaced by $c_{j}-X_{i,j}B_{j,j}c_{i}$ will make $Y_{i,j}$ to $0$.
If $2|a+1$, we replace $r_{i}$ by $r_i-r_{n-j+1}X_{i,j}B_{j,j}/z$( $B_{j,j}$ has valuation $\geq 2$).
Recall that on row $n-j+1 $of $Y$, $Y_{n-j+1,n-j+1}=1,Y_{n-j+1,j}=z;$ if $X_{n-l+1,n-l+1}=z, \frac{n+t_1}{2}<l<j$ ($v(Y_{n-j+1,l})=1\text{ or } \infty)$, then $Y_{n-j+1,l}=X_{n-j+1,n-l+1}B_{n-l+1,l}$ ; if $X_{l,l}=z, l>j$. ($v(Y_{n-j+1,l})\geq 2 \text{ or } \infty)$, then $ Y_{n-j+1,l}=X_{n-j+1,l}B_{l,l}$.
Then we need to do the next two steps to make all other entries in row $i$ remain the same. 
\begin{enumerate}
    \item A column operation $c_{n-j+1}=c_{n-j+1}+c_i X_{i,j}B_{j,j}/z$. 
    \item If $X_{n-l+1,n-l+1}=z, \frac{n+t_1}{2}<l<j$, we redo step 3 to eliminate terms in $Y_{i,l}$ that is caused by the $r_{n-j+1}$ row operation.
\end{enumerate}
Notice that for step 5, we on purpose make current $Y$ and original $Y$ only differ by one element for each $i,j$. 

Step 6: For $n\geq j>\frac{n+t_1}{2}, \frac{n-t_1}{2}<i\leq\frac{n+t_1}{2} $, there exist allowed operations such that $X_{i,j}$ can be reduced to 0. 

It suffices to consider $X_{i,j}\neq 0,$ which implies $X_{j,j}=z, X_{i,i}=1.$  If $B_{j,j} \text{and } B_{i,i} $ are of the same parity, $c_j$ can be replaced by $c_j-X_{i,j}B_{j,j}/B_{i,i}c_i$. If $B_{j,j} \text{and } B_{i,i} $ are of different parities: if $X_{n-j+1,n-j+1}=z$,  apply step 3 again; if $X_{n-j+1,n-j+1}=1,$ apply step 5 again.
\end{proof}

In the following lemma,  $[M]_{n/2}$ denotes the $n/2,n/2$ submatrix of $M$ that consists of the  $\{n/2+1,n/2+2,...n\}$th rows and columns; $\bar{[M]}_{n/2}$ denotes the $n/2,n/2$ submatrix of $M$ that consists of the  $\{1,2,...n/2\}$th rows and columns. Let $\hat{[M]}_{n/2}$ denote a matrix $Y$ in $\text{GL}_n([[z]])$ such that $Y_{i,i}=1,Y_{i,n-i+1}=z$ for $i\leq n/2$ and $[Y]_{n/2}=[M]_{n/2}$ and all other entries are 0.
\begin{lemma}
\label{decomp_flip}
In this lemma, we impose a further condition on $B$: $t_1=0$. For any $X\in \mathcal{A}_a$, there exist $M,N\in \text{GL}_{n/2}(k((z)))$ such that: $N$ is diagonal with $N_{i,i}=1/X_{n/2-i+1,n/2-i+1}$; $X_{i,j}=-M_{n/2+1-i,n/2+1-j}$ for $0<i,j\leq n/2\text{ and }i\neq j$; $M_{i,i}=1$ for all $i$. Thus $\sigma(XB)=\rho([X]_{n/2} [B]_{n/2} M N)$. 
\end{lemma}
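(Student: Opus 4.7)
The plan is to reduce $XB$ to a block-diagonal matrix whose lower-right $(n/2)\times(n/2)$ block is exactly $[X]_{n/2}[B]_{n/2}MN$, from which the identity $\sigma(XB)=\rho([X]_{n/2}[B]_{n/2}MN)$ can be read off. First I would apply Lemma~\ref{decomp_01} with $t_1=0$, in which case the middle block disappears and the two permitted diagonal blocks are precisely $\bar{[X]}_{n/2}$ and $[X]_{n/2}$. This allows replacing $X$ by the block-diagonal matrix $X'=\operatorname{diag}(\bar{[X]}_{n/2},[X]_{n/2})$ without changing $\sigma(X'B)=\sigma(XB)$.

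Next, I would compute $X'B$ directly. With $t_1=0$ the hypothesis $\nu_i>1$ holds for every $i\le n/2$, so $B$ has upper-left block $I_{n/2}$, upper-right block $zJ$ (where $J$ is the $(n/2)\times(n/2)$ anti-identity coming from the entries $B_{i,n-i+1}=z$), zero lower-left block, and lower-right block $[B]_{n/2}$. A direct multiplication then gives
\[
X'B=\begin{pmatrix}\bar{[X]}_{n/2} & z\,\bar{[X]}_{n/2}\,J \\ 0 & [X]_{n/2}[B]_{n/2}\end{pmatrix}.
\]

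Now I would justify the prescribed formulas for $M$ and $N$ by matrix manipulation. The diagonal matrix $N$ with $N_{i,i}=X_{n/2-i+1,n/2-i+1}^{-1}$ is designed to invert the $\{1,z\}$-diagonal of $\bar{[X]}_{n/2}$ after it is pulled through the anti-identity $J$ into the lower-right block; the matrix $M$ with $M_{i,i}=1$ and $M_{i,j}=-X_{n/2+1-i,n/2+1-j}$ for $i\ne j$ records the off-diagonal entries of $\bar{[X]}_{n/2}$ after reflection by $J$, with the minus sign implementing a column-clearing operation. Concretely I would exhibit $k\in K$ (produced from transvections via Lemma~\ref{ktheory}) that clears the upper-left block down to the identity, together with $h\in H$ that clears the resulting upper-right block; tracking what falls out on the lower-right under these operations produces $[X]_{n/2}[B]_{n/2}MN$. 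Lemma~\ref{continuity} is invoked to rule out any spurious contribution of higher-order perturbations to the resulting coweight.

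The main obstacle is the parity constraint: $H$-column operations can only use coefficients in $k((z^2))$, while the diagonal compensation recorded by $N$ lies in $k((z))\setminus k((z^2))$ whenever some $X_{i,i}=z$. The point of the lemma is that these ``illegal'' $H$-operations do not need to be performed — they are absorbed into the auxiliary matrices $M$ and $N$, so that the final symmetric coweight of $XB$ equals the dominant coweight of the $(n/2)\times(n/2)$ product $[X]_{n/2}[B]_{n/2}MN$. Getting the bookkeeping of which entries live in which ring correct is the delicate part; everything else reduces to straightforward block manipulation using the explicit form of $B$ from the paragraph preceding the lemma.
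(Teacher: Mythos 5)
Your setup is fine and matches the paper's first half: invoking Lemma~\ref{decomp_01} with $t_1=0$ to pass to the block form, computing $X'B$, and recognizing that the diagonal entries $X_{i,i}=z$ in the top-left block cannot be cleared by legal $H$-operations and must instead be pushed into the bottom-right block as the factors $N$ and $M$. But you state this last step only as design intent ("tracking what falls out on the lower-right\dots produces $[X]_{n/2}[B]_{n/2}MN$") without the actual operations; the paper's Steps 7--8 are where the content is, e.g.\ the column swap $c_i\leftrightarrow c_{n-i+1}$ followed by rescaling, which is what legitimately realizes the forbidden division of a column by $z$ and produces $N_{i,i}=1/X_{n/2-i+1,n/2-i+1}$.

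The genuine gap is that you stop at the point where the paper's proof splits into its two halves, and you omit the second half entirely. Once $Y$ is in standard form with bottom-right block $C=[X]_{n/2}[B]_{n/2}MN$, the conclusion $\sigma(XB)=\rho(C)$ equates a $K\backslash G/H$ invariant of the $n\times n$ matrix with a $K\backslash G/K$ invariant of the $(n/2)\times(n/2)$ block. Left multiplication of $C$ by $\mathrm{GL}_{n/2}(k[[z]])$ embeds into left multiplication of $Y$ by $K$, but \emph{right} multiplication of $C$ by a transvection $\mathbbm{1}_{(j,i)}(f)$ with $f\in k[[z]]$ does not embed into $H=\mathrm{GL}_n(k((z^2)))$ when $f$ has an odd part. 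Your remark that the illegal operations ``are absorbed into $M$ and $N$'' addresses only the construction of the bottom block, not this second problem: without it you only know that $\sigma(XB)$ is determined by the $(K_{n/2},H_{n/2})$-class of $C$, which is strictly weaker than $\rho(C)$. The paper closes this by writing $f=f_o+f_e$ and simulating the odd part $f_o$ through the top half of $Y$ (using the rows $Y_{i,i}=1$, $Y_{i,n-i+1}=z$ as a conduit), after which Lemma~\ref{ktheory} ($E_{n/2}=\mathrm{SL}_{n/2}$ over $k[[z]]$ plus Cartan) lets one diagonalize $C$ to $z^{\rho(C)}$ and read off $\sigma$. You need this argument, or a substitute for it, for the lemma to be proved.
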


\begin{proof}
Lemma~\ref{decomp_01} shows that if $X,X'\in \mathcal{A}_a$ and they follow the restrictions in Lemma~\ref{decomp_01}, there exist $ k_0\in K,h_0\in H\text{ such that }k_0XBh_0=X'B$. It suffices to show: 1. There exist columns and rows operations on $XB$ to a matrix $Y$, where  $Y_{i,i}=1,Y_{i,n-i+1}=z$ for $i\leq n/2$ and $[Y]_{n/2}=[X]_{n/2} [B]_{n/2} M N$ and all other entries are 0. 2. On $[Y]_{n/2}$, given any transvectional matrix $e_3$ in $\text{GL}_{n/2}(k[[z]])$, there exist row and column operations such that $\hat{[Y]}_{n/2}$ turns to $\hat{[Y]_{n/2} e_3}$. Thus by Lemma ~\ref{ktheory}, $\sigma(XB)=\rho([X]_{n/2} [B]_{n/2} M N).$

For 1:
we add steps 7 and 8 after the former 6 steps for the sake of proof continuity. Step 7:  For $i\leq n/2, n/2< j<n-i \text{ satisfying  }Y_{i,i}=1\text{ and }X_{n-j+1,n-j+1}=z$ (recall $Y_{i,j}=X_{i,n-j+1}z$), column operation on $c_j$ by rewriting $c_j$ to be $c_j-c_{n-i+1}X_{i,n-j+1}$  makes $Y_{i,j}=0$. Then a first half of step 1 will make $Y_{i,n-j+1}$ 0. Now, $[Y]_{n/2}=[X]_{n/2} [B]_{n/2} M$.

Step 8. For all $i\leq n/2\text{ with }Y_{i,i}=z\text{ and for all }n/2<j<n$, row operation on $r_j$ by rewriting $r_j$ to be $r_j-r_iY_{j,n-i+1}/z^2$ makes  $Y_{j,i}=-Y_{j,n-i+1}/z$( $Y_{j,n-i+1}$ refers to the entry of $Y$ before step 8) and $Y_{j,n-i+1}=0$. After each $j$ is proceeded, we do a column switch for $c_i,c_{n-i+1}$. Then rewrite  $c_{n-i+1} \text{ as } -c_{n-i+1}; r_{i}\text{ as }-r_{i};c_{i}\text{ as } -c_{i}/z^2 $.  Notice that these procedures for $i$ are equivalent to rewrite  $c_{n-i+1} \text{ as }c_{n-i+1}/z $.

For 2, we write $e_3$ as a column operation that takes $c_i \text{ to be }c_i-fc_j $. This is equivalent, on $Y$, to $c_{n/2+i}-fc_{n/2+j}$ on bottom right submatrix of $Y$ while preserving other parts. Let $f_{o}$ denote the odd part of f and $f_e$ even part of f.  Similarly to step 8, for $n/2<l<n$, row operations on $r_l$ by writing $r_l$ to be $r_{l}-r_{n/2-j+1}Y_{l,n/2+j}/z$ will make $Y_{l,n/2+j}=-Y_{l,n/2-j+1}/z$ ( $Y_{j,n/2-j+1}$ refers to the entry of $Y$ before) and  $Y_{l,n/2-j+1}=0$. After each $l$ is proceeded, we do a column operation on $c_{n/2+i}$ by writing $c_{n/2+i}$ to be  $c_{n/2+i}-f_oc_{n/2-j+1}z$.  Notice that currently $Y_{n/2-j+1,i+n/2}=-f_oz$, thus  $r_{n/2-j+1}+r_{n/2-i+1}f_o\text{ as } r_{n/2-j+1} \text{ and } c_{n/2-i+1}-c_{n/2+j}f_o \text{ as }c_{n/2-i+1}$ will remove the term $Y_{n/2-j+1,i+n/2}$. Similarly to step 8, for $n/2<l<n$, row operations $r_{l}-r_{n/2-j+1}Y_{l,n/2-j+1}$ on $r_l$ will make $Y_{l,n/2+j}$ to be the original $Y_{l,n/2+j}$. For $f_e,$ column operation on $c_{n/2+i}$ by writing $c_{n/2+i}$ to be $c_{n/2+i}+f_ec_{n/2+j}$ makes column $c_{n/2+i}$ as $c_{n/2+i}-fc_{n/2+j}$. Notice currently $Y_{n/2-j+1,i+n/2}=f_ez$, row operation on $r_{n/2-j+1}$ by writing $r_{n/2-j+1}$ to be $r_{n/2-j+1}-r_{n/2-i+1}f_e$ and column operation on $c_{n/2-i+1}$ by writing $c_{n/2-i+1}$ to be $c_{n/2-i+1}+c_{n/2-j+1}f_e$ will remove the term $Y_{n/2-j+1,n/2-i+1}$.
\end{proof}
We first prove a weak version of Theorem~\ref{mainthm} with the help of the above lemmas:

\begin{lemma}
\label{weak}
Theorem~\ref{mainthm} holds if distinct parts of $\nu$ with $\nu_i>1$ differ by at least 5.
\end{lemma}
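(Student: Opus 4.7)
The plan is to apply Lemmas~\ref{decomp_A}, \ref{decomp_01}, and \ref{decomp_flip} in sequence, then exploit the gap-$5$ hypothesis to decouple the computation block-by-block. Since $K\pi^{-1^\ell}K=(K\pi^{1^\ell}K)^{-1}$, Lemma~\ref{decomp_A} gives a parametrization of the coset representatives $y_i^{-1}K$ by inverses of elements of $\mathcal{A}_\ell$, so $h_{-1^\ell,\nu}^\lambda$ becomes the count of $X\in\mathcal{A}_\ell$ with $\sigma(XB)=\lambda$, where $B$ is the fixed block matrix defined just before Lemma~\ref{decomp_01}.

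Given this parametrization, I encode each $X$ by a tuple $w\in\{0,1\}^n$ recording whether $X_{i,i}=1$ ($w_i=0$) or $X_{i,i}=z$ ($w_i=1$). The three axioms defining a paired tuple in $W_{\ell,\nu}^\lambda$ correspond respectively to: the constraint $\sum w_i=\ell$ from $X\in\mathcal{A}_\ell$; the symmetric pairing $(i,n-i+1)$ forced for $\nu_i>1$ by the block structure of $B$, together with an optional pairing for $\nu_i=1$ with a $\nu_j=0$ slot that corresponds to transpositions inside the middle $t_1$-block; and the requirement that the resulting tuple $\hat{\nu}$ matches $\lambda$ as a multiset, which is precisely $\sigma(XB)=\lambda$.

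Next, I use the assumption that distinct $\nu_i>1$ differ by at least $5$ to factor the count. By Lemma~\ref{decomp_flip}, the count reduces to one about $\rho([X]_{n/2}[B]_{n/2}MN)$, and $[B]_{n/2}$ is diagonal with entries $z^{\nu_i}$. A gap of $\geq 5$ between distinct valuations allows Lemma~\ref{continuity} to absorb all cross-block cascades introduced by the row/column operations in Lemma~\ref{decomp_01}, so the valuation data restricted to each ``value block'' can be computed independently. Concretely, for each distinct value $a>1$ appearing $n_a$ times in $\nu$, the local count of $X$'s contributing to a given $\omega_{[a]}$ is exactly $h_n^{i,j,k}$, which up to the normalization $q^{-jk-(j+i)(n-i-k)}$ equals $\tilde h_a^{\omega_{[a]}}$; the combined $\{0,1\}$ block contributes $\bar h_1^{\omega_{[1]}}$ by the same reasoning applied to the middle $t_1\times t_1$ block.

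The main obstacle is extracting the prefactor $q^{r([w])}$. This $q$-power accounts for (i) free parameters in the off-block positions of $X\in\mathcal{A}_\ell$ that are not fixed by $\sigma(XB)$ but are counted by the coset decomposition (giving cross-block interaction terms such as $n_i n_j$ and $n_i\omega_{[i]}^{0,1}$); (ii) the $q$-powers already absorbed into the $\tilde h_a^{\omega_{[a]}}$ normalizations, which must be re-expressed in terms of global invariants to avoid double-counting; and (iii) a quadratic correction $-\frac{1}{2}\bigl((\sum_{i>1}\omega_{[i]}^{0,0}-\omega_{[i]}^{1,1})^2-(\sum_{i>1}\omega_{[i]}^{0,0}\omega_{[i]}^{0,0}+\omega_{[i]}^{1,1}\omega_{[i]}^{1,1})\bigr)$ arising from overcounting when the same column of $X$ is involved in multiple pairings across blocks. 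Tracking this bookkeeping carefully is the delicate step, and is exactly where the gap-$5$ hypothesis is invoked to prevent cross-terms between different value blocks from contaminating the local Hecke factors.
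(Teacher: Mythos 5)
Your outline follows the same route as the paper: parametrize the cosets by $\mathcal{A}_\ell$ via Lemma~\ref{decomp_A}, encode the diagonal of $X$ as a paired tuple, reduce via Lemmas~\ref{decomp_01} and~\ref{decomp_flip}, and use the gap hypothesis to decouple the value blocks so that each block contributes a local factor $\tilde h_a^{\omega_{[a]}}$ or $\bar h_1^{\omega_{[1]}}$, with $q^{r([w])}$ collecting the free off-block entries. However, the technical heart of the lemma is precisely the block-independence claim, and you assert it rather than prove it. Saying that ``a gap of $\geq 5$ allows Lemma~\ref{continuity} to absorb all cross-block cascades'' is not yet an argument: Lemma~\ref{continuity} only applies when the perturbation lies in $M_n(z^a k[[z]])$ with $a$ strictly larger than the largest part of the local coweight, so one must actually exhibit the clearing procedure and track the valuations of the errors it introduces. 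The paper does this by processing the blocks $I_{i_1}, I_{i_2},\dots$ in increasing order of valuation, diagonalizing each block, and verifying quantitative statements of the form: the error on the diagonal block $I_{i_j}$ has valuation at least $2(i_j-1)-(i_1+1)$ and the error on the off-diagonal block $I_{i_j},I_{i_l}$ has valuation at least $i_l+i_j-2-(i_1+1)$. These bounds exceed the threshold $i_j+1$ exactly when consecutive distinct parts differ by more than $4$ --- this is where the constant $5$ comes from, and your proposal gives no account of it.

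Two smaller issues. First, to identify the index set of the sum with $W_{\ell,\nu}^\lambda/\sim$ you must also verify the parity and range constraints on the local data (in the paper: $d-(a+c)\geq 0$ and $2\mid d-(a+c)$ for each block with $\nu_i>1$, and for the middle block the condition that a part $2$ can appear only if a part $1$ survives); your axiom-matching paragraph does not address these. Second, your explanation of the quadratic term in $r([w])$ as ``overcounting when the same column of $X$ is involved in multiple pairings across blocks'' does not match its actual origin: in the paper it arises purely from algebraically reorganizing the double sums $\sum_{1<j<i}\bigl(\sum_k \omega_{[i]}^{0,k}\bigr)\bigl(\sum_k\omega_{[j]}^{1,k}\bigr)+\cdots$ that count free cross-block entries, together with the normalization already built into $\tilde h_a^{\omega_{[a]}}$. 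As written, the proposal is a plausible plan but leaves the step that justifies the hypothesis of the lemma --- and hence the lemma itself --- unestablished.
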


\begin{proof}
Define $I_{j}=[\sum_{i=2}^{j-1} m_i(\nu) +1,\sum_{i=2}^{j} m_i(\nu) ]$. Let $[C]_{I}$ denote the submatrix of C with rows and columns in $I$. Let $[C]_{I,J}$ denote the submatrix of C with rows in $I$ and columns in $J$.   

Claim: Following lemma~\ref{decomp_flip}, there exist $ g_1\in \text{GL}_{n/2}(k[[z]]), e_2\in E_{n/2} $ such that \[g_1 [X]_{n/2} [B]_{n/2} M N e_2=\bigoplus_i  z^{\rho([[X]_{n/2}]_{I_i}[[B]_{n/2}]_{I_i} [M]_{n/2-I_i} [N]_{n/2-I_i})}.\] In other word, if $i_1\neq i_2$, all terms in $[[X]_{n/2}]_{I_{i_1},I_{i_2}},[\bar{[X]}_{n/2}]_{I_{i_1},I_{i_2}}$ do not have any influence on $\sigma(XB)$. 

We denote $I'_1=[\frac{n-t_1}{2},\frac{n+t_1}{2}]$, and  \[I'_j=[\sum_{i=2}^{j-1} m_i(\nu) +1,\sum_{i=2}^{j} m_i(\nu) ]\cup (\frac{n+t_1}{2}+[\sum_{i=2}^{j-1} m_i(\nu) +1,\sum_{i=2}^{j} m_i(\nu)]).\] By lemma~\ref{decomp_01} and the claim, we have shown  $X,X'\in \mathcal{A}_\ell$, $\sigma(XB)=\sigma(X'B)$ if $[X]_{I'_i}=[X']_{I'_i}$  for all $I_i'$. In the proof of the claim,  we will show: \[\rho([[X]_{n/2}]_{I_i}[[B]_{n/2}]_{I_i} [M]_{n/2-I_i} [N]_{n/2-I_i})=\{(i+1)^a,i^b,(i-1)^c\}.\] We write the number of z and 1/z in $[[X]_{n/2}]_{I_i}\cup [N]_{n/2-I_i}$  to be d. Thus, $d-(a+c)\geq 0 \text{ and } 2|d-(a+c)$. In  the proof of Proposition ~\ref{compute1} , we will show on $I'_1$, $\sigma=\{2^a,1^b,0^c\}$ and same rule on a,b,c hold, with additional dimension requirement: $z^2$ exists only if 1 exists in $\sigma(B)$. These are the rules in the summation of the original theorem under $[w]$. 

The remaining part of this proof is to compute $r[w]$. 
In Lemma~\ref{decomp_01}, Step 2: For any $i\leq\frac{n-t_1}{2}$ and $\frac{n-t_1}{2}<j\leq\frac{n+t_1}{2}$ , there exist allowed operations such that $X_{i,j}$ can be reduced to 0. The number of all entries satisfying above condition in $X$'s is $(\sum_{i>1,j\in \{0,1\}} \omega_{[i]}^{0,j})(\sum_{i,j}\omega_{[1]}^{i,j})$.
In Lemma~\ref{decomp_01}, Step 6: for $n\geq j>\frac{n+t_1}{2}, \frac{n-t_1}{2}<i\leq\frac{n+t_1}{2} $, there exist allowed operations such that $X_{i,j}$ can be reduced to 0. 
The number of all entries satisfying the above condition in $X$'s is $(\sum_{i>1,j\in \{0,1\}} \omega_{[i]}^{j,1})(t_1-\sum_{i,j}\omega_{[1]}^{i,j})$.
Combining all other steps in Lemma~\ref{decomp_01}, for any $i\leq\frac{n-t_1}{2}$ and $n\geq j>\frac{n+t_1}{2}$ , there exist allowed operations such that $X_{i,j}$ can be reduced to 0. The number of all entries satisfying the above condition in $X$'s is $(\sum_{i>1,j\in \{0,1\}} \omega_{[i]}^{j,1})(\sum_{i>1,j\in \{0,1\}} \omega_{[i]}^{0,j})$.
From the claim, the number of all entries satisfying the condition in $X$'s is 
\[\sum_{1<j<i,}(\sum_{k\in \{0,1\}}\omega_{[i]}^{0,k})(\sum_{k\in \{0,1\}}\omega_{[j]}^{1,k})+\sum_{1<i<j,}(\sum_{k\in \{0,1\}}\omega_{[i]}^{k,0})(\sum_{k\in \{0,1\}}\omega_{[j]}^{k,1}).\]
From the definition of $\tilde{h}_a^{\omega_{[a]}}$ and Proposition~\ref{computen}, $\sum_{i>1} (\omega_{[i]}^{1,0}\omega_{[i]}^{0,1})$ need to be added in $r[w]$.

Therefore, \[r([w])=\sum_{i>1} \omega_{[i]}^{1,0}\omega_{[i]}^{0,1}+\sum_{1<j<i}(\sum_{k\in \{0,1\}}\omega_{[i]}^{0,k}\sum_{k\in \{0,1\}}\omega_{[j]}^{1,k}+\sum_{k\in \{0,1\}}\omega_{[j]}^{k,0}\sum_{k\in \{0,1\}}\omega_{[i]}^{k,1})\]\[+\sum_{i>1,j\in \{0,1\}} \omega_{[i]}^{j,1}\sum_{i>1,j\in \{0,1\}} \omega_{[i]}^{0,j}+\sum_{i>1,j\in \{0,1\}} \omega_{[i]}^{0,j}\sum_{i,j}\omega_{[1]}^{i,j}+(\sum_{i>1,j\in \{0,1\}} \omega_{[i]}^{j,1})(t_1-\sum_{i,j}\omega_{[1]}^{i,j}).\]
Up to reordering and simplifying, we have the $r[w]$ in the statement.

Proof of claim: 
We are still following our steps in Lemma~\ref{decomp_flip}. However, in the following proof, we will replace $[X]_{n/2} [B]_{n/2} M N$ by $Y$ temporarily for the simplicity of notation. The row and column operations refers to the transvectional matrix in $ E_{n/2}$ multiplying on the left and right, plus simple row multiplication by $f \text{ with }v(f)=0$.

We start from the first nonempty set $I_{i_1}$.  Denote $C_{i_1}= [[X]_{n/2}]_{I_{i_1}}[[B]_{n/2}]_{I_{i_1}} [M]_{n/2-I_{i_1}} [N]_{n/2-I_{i_1}}$. We will prove three statements: 1. $\rho(C_{i_1})$ consists of $\{i_1,i_1-1,i_1+1\}$;
2. There exist column and row operations that make $[Y]_{I_{i_1}}$ diagonal and remove all other elements in the same rows or columns with statement 3 always true;
3. Each term in $[Y]_{I_{i_j}}-C_{i_j}$ has valuation $>i_j+1$.  Each nonzero term in $[Y]_{I_{i_j},I_{i_l}}$ has valuation $\geq \text{max} (i_j,i_l)-1$. Therefore, employing 3 statements on $i$'s from all nonempty sets $I_{i}$ with $i$ in increasing order  will finish the proof of the claim. 

Statement 1: Recall that original Pieri rule guarantees that $\rho([[X]_{n/2}]_{I_{i_1}}[[B]_{n/2}]_{I_{i_1}})$ consists of $\{i+1,i\}$ with the number of i+1 being the number of $z$'s on the diagonal of  $[[X]_{n/2}]_{I_{i_1}}$. By the construction of $M$, all entries in $M $ are in $ k \text{ and } det(M)\neq 0$. Thus,  $\rho(C_{i_1})$ consists of $i_1-1,i_1,i_1+1$. and there exists $p\geq 0$ such that  the number of $i_1-1,i_1+1$ plus $2p$ is the number of $z$ or $1/z$ on the diagonal of $[[X]_{n/2}]_{I_{i_1}},[N]_{n/2-I_{i_1}}$.

Statement 2: Currently, each term in $[Y]_{I_{i_1}}-C_{i_1}$ has valuation $>i+1$ (directly from multiplication). Then, by Lemma~\ref{decomp_01}, $\rho([Y]_{I_{i_1}})=\rho(C_{i_1})$. By Lemma~\ref{decomp_flip}, the submatrix $[Y]_{I_{i_1}}$ can be diagonalized, with rows/columns operations. Then we remove all nonzero terms in column $I_{i_1}$ under the current block. On rows in $I_{i_j},$ the difference in elements before and after row operation will have valuation  $\geq i_j-1-(i_1+1)$. Thus on the $I_{i_j}$ diagonal block, the difference in any element has valuation $\geq 2(i_j-1)-(i_1+1)$. On the off diagonal block, $[Y]_{I_{i_j},I_{i_l}}$, the difference in any element has valuation $\geq i_l-1+i_j-1-(i_1+1)$. Then we  remove all terms in rows $I_{i_1}$ after the $I_{i_1}$ block. Notice that with the gap of greater than 4, all above inequality satisfies the statement 3. We redo this procedure to make $Y$ blockwise diagonal. 
\end{proof}

Before we close the gap of Theorem~\ref{mainthm}, we will first provide closed formulas for $h$ with restrictions on $\nu$. Since the proof of closing the gap is pure combinatorial and lengthy, we will give a self-contained proof for the following two theorems (Proposition ~\ref{computen} is a simple case of Proposition~\ref{young}). In the rest of this chapter, we will define B slightly different than before. Let $B$ denote the block diagonal matrix with valuations in nondecreasing order and $\sigma(B)=\nu$. Recall from the beginning of this chapter, this is the definition of $\Pi^{\nu}$.

In this theorem, we have $\nu=\{a^n\}$. Equivalently, on $B$,  it consists of $z^a$ blocks.
\begin{proposition}

\label{computen}
\[\tilde{h}_a^{(n-i-j-k,i,k,j)}={\binom{n}{i,j,k}}\text{ for all a with } a>1.\]
\end{proposition}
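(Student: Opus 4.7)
The plan is a direct coset count, bypassing the paired-tuple machinery of Theorem~\ref{mainthm} (since $\nu = \{a^n\}$ does not satisfy $m_0(\nu) \geq \sum_{i>1} m_i(\nu)$). Because all $n$ parts of $\nu$ equal $a > 1$, the matrix $B = \Pi^\nu$ is block diagonal with $n$ identical blocks; each block sits in positions $(r,r),(r,\,2n-r+1),(2n-r+1,\,2n-r+1)$ with entries $1, z, z^a$ respectively, and all other entries $0$. By Lemma~\ref{decomp_A}, after the $z^{-1}$ shift converting $\mu = -1^{2i+j+k}$ to the $\rho = (1^{\ell})$ framework, the cosets $yK \subset K\pi^\mu K$ are parametrized by $X \in \mathcal{A}_{2i+j+k}$. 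To each such $X$ I attach a binary tuple $w$ with $w_r = 0$ if $X_{r,r} = 1$ and $w_r = 1$ if $X_{r,r} = z$, and I classify each pair $\{r, 2n-r+1\}$ by its bit pattern $(w_r, w_{2n-r+1}) \in \{0,1\}^2$; the four types are then counted by $\omega_{[a]}^{0,0}, \omega_{[a]}^{1,1}, \omega_{[a]}^{1,0}, \omega_{[a]}^{0,1}$.

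A per-block analysis, using the row/column reductions of Lemmas~\ref{decomp_01} and~\ref{decomp_flip} and the fact that the bottom-right half of $B$ is the scalar matrix $z^a I$, shows that a type $(0,0)$ pair contributes $a$ to $\sigma(XB)$; a type $(0,1)$ pair contributes $a+1$; a type $(1,0)$ pair contributes $a-1$; and a type $(1,1)$ pair contributes $a$ while consuming two of the $\ell$ units of valuation decrement carried by $\mu$. Matching the resulting multiset of contributions against $\lambda = \{(a+1)^j, a^{n-j-k}, (a-1)^k\}$ forces exactly $j$ pairs of type $(0,1)$, $k$ of type $(1,0)$, $i$ of type $(1,1)$, and $n-i-j-k$ of type $(0,0)$, which is precisely consistent with $2i+j+k = \ell$. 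The number of such type assignments over the $n$ pairs is the multinomial $\binom{n}{i,j,k,n-i-j-k} = \binom{n}{i,j,k}$.

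It remains to account for the off-diagonal freedom of $X$. Within a fixed type assignment, each off-diagonal entry $X_{r,s} \in k$ that survives the $(K,H)$-equivalence contributes a factor of $q$ to the coset count. Running the six-step reduction of Lemma~\ref{decomp_01} blockwise and tracking which entries get absorbed produces $jk + (j+i)(n-i-k)$ surviving parameters, so that $h_n^{i,j,k} = \binom{n}{i,j,k}\, q^{jk+(j+i)(n-i-k)}$; dividing by the normalization in the definition of $\tilde{h}_a^{\omega_{[a]}}$ yields the claim. The main obstacle is this last $q$-exponent: while the reduction machinery clearly marks which off-diagonals are redundant, extracting the exact quadratic form $jk + (j+i)(n-i-k)$ demands careful cross-type bookkeeping (the $jk$ piece coming from $(0,1)$-versus-$(1,0)$ interactions, the $(j+i)(n-i-k)$ piece from interactions of the $j+i$ pairs whose lower index carries a $0$ with the remaining $n-i-k$ pairs). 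If the direct enumeration proves too delicate, an alternative is to verify the identity on the base cases $(i,j,k) \in \{(0,0,0),(1,0,0),(0,1,0),(0,0,1)\}$ and induct on $n$ via a Pieri-style recursion inherited from the weak form of Theorem~\ref{mainthm} established in Lemma~\ref{weak}.
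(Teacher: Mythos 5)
Your proposed direct count rests on a premise that fails for this family: you assert that the bit pattern $(w_r,w_s)$ of the diagonal entries of $X$ on a pair determines that pair's contribution to $\sigma(XB)$ (type $(0,1)\mapsto a+1$, type $(1,0)\mapsto a-1$, etc.), so that matching against $\lambda$ ``forces'' the type multiplicities and the remaining off-diagonal freedom is a pure power of $q$. In fact the effect of a diagonal $z$ in $X$ is neither localized to its own block nor determined by the diagonal alone: in the paper's proof (Claims 2--9, inherited from the matrix reductions of Proposition~\ref{young}), a single diagonal pattern with $X_{2n-1,2n-1}=z$ and $X_{2n,2n}=1$ can send a $z^a$ block elsewhere to $z^{a+1}$, a $z^{a-1}$ block to $z^{a}$, a $z^{a}$ block to $z^{a-1}$, or the last block to $z^{a-1}$, depending on which off-diagonal entries of $X$ are nonzero. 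Consequently the number of $X$ with fixed diagonal pattern and fixed $\sigma(XB)$ is not $q^{\#\{\text{surviving parameters}\}}$ but a sum of terms involving factors of the form $q^m-1$ (conditions such as ``at least one nonzero entry in $A_3$''), and $\binom{n}{i,j,k}$ does not arise as a count of diagonal patterns. The step you flag as ``the main obstacle'' is therefore not a bookkeeping delicacy but a symptom that the factorization you posit is false at the level of individual patterns; the clean product only emerges after summation.

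Your fallback --- induction on $n$ via a recursion --- is the paper's actual route, but you have not set it up. The paper embeds a $(2n-2)\times(2n-2)$ matrix $X'$ into $X$ as its leading submatrix, enumerates the eight ways the last two columns can alter $\sigma$, computes each coefficient (e.g.\ $q^{n-i-1}(q^{n-i-j-k}-1)$ or $q^{n-i-k-1}(q^{k+1}-1)$) from the Claims above, and closes the induction with the identity $q^{j+n-k}[k]+(q^{n-i-k}+q^{n-i-j-k}-1)[j]+(q^{n-i-k}+q^{n-1}+1)[i]+[n-i-j-k]=q^{i+j}[n]$. Establishing that recursion and its coefficients is the substance of the proof and is entirely missing from your proposal. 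A minor further point: for this proposition $B=\Pi^{\nu}$ is the block-diagonal matrix with consecutive $2\times 2$ blocks, not the antidiagonally paired matrix of Lemmas~\ref{decomp_01}--\ref{weak} that you describe.
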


\begin{proof}
We will prove $h_{-1^{2i+j+k},\{a^n\}}^{\{(a+1)^j,a^{n-j-k},(a-1)^k\}}=q^{n(i+j)-i(i+j+k)}{\binom{n}{i,j,k}}$.

Let $h_n^{i,j,k} \text{ denote }h_{-1^{2i+j+k},\{a^n\}}^{\{(a+1)^j,a^{n-j-k},(a-1)^k\}}$. We will give a detailed proof based on the matrix decomposition of Proposition~\ref{young}. However, instead of employing the entire paired semi-tableau construction, we only borrow the matrix decomposition techniques and the statement:  All elements that must be $0$s (originated from the rule in $X$) still remain to be $0$ under some row operations.  We prove by induction on $n$, where $n$ is the number of two dimensional blocks. $i,j, k$ correspond to $\pm1, +1,-1$. That is, for $B$ consists of $n$ $z^a$ block, $h_n^{{i,j,k}}$ is the number of $X$ in $\mathcal{A}_{2i+j+k}$ such that $\sigma(XB)=\{(a+1)^{j},a^{n-j-k},(a-1)^{k}\}$. 
Assume the statement holds for all $h_{n'}^{i,j,k}$ with $n'<n$. 
We will prove the following equality:
\begin{align*}
h_n^{{i,j,k}} & = h_{n-1}^{{i,j,k}}+h_{n-1}^{{i,j,k-1}} q^{n-k-i}+h_{n-1}^{{i,j-1,k}} (q^{2n-k-j-2i}+q^{n-i-1}(q^{n-i-j-k}-1)) \\
   &+h_{n-1}^{{i-1,j,k+1}}q^{n-i-k-1}(q^{k+1}-1)+h_{n-1}^{{i-1,j,k}}q^{2n-2i-k}+h_{n-1}^{{i,j-1,k-1}}q^{2n-2i-k-1}(q^{n-i-j-k+1}-1) \\
   &+h_{n-1}^{{i,j-2,k}}q^{3n-3i-j-k-1}(q^{n-i-j-k+1}-1)+h_{n-1}^{{i-1,j-1,k+1}}q^{3n-3i-2k-j-1}(q^{k+1}-1).
\end{align*}
Following the general proof of Proposition~\ref{young}, we embed a $(2n-2)\text{ by } (2n-2)$ $X'\in \mathcal{A}$ into  $X$ $\in \mathcal{A}_{2i+j+k}$ as the first $(2n-2)\text{ by } (2n-2)$ submatrix. Denote $\sigma(X'[B])=\{(a+1)^{j'},a^{n-j'-k'},(a-1)^{k'}\}$. (By abusing the notation, we write $[B]$ to be the first $(2n-2)\text{ by } (2n-2)$ submatrix of $B$.) Then on columns $2n-1,2n$ of $X$, the following changes in $\sigma$ can be achieved: 
\begin{enumerate}
\item If there is no $z$'s in $X_{2n-1,2n-1}, X_{2n,2n}, \text{ then }\sigma(XB)=\{(a+1)^{j'}, a^{n-j'-k'}, (a-1)^{k'}\}$ (summand 1).  

\item If there is $1$ $z$'s in $X_{2n-1,2n-1}, X_{2n,2n}, \text{ then } \sigma(XB)= \{(a+1)^{j'}, a^{n-j'-k'-1}, (a-1)^{k'+1}\} \text{ (summand 2)}, \{(a+1)^{j'+1}, a^{n-j'-k'-1}, (a-1)^{k'}\}\text{ (summand 3)}$, $\{(a+1)^{j'}, a^{n-j'-k'+1}, (a-1)^{k'-1}\}\text{ (summand 4)}.$ 

\item If there are $2$ $z$'s in $X_{2n-1,2n-1}, X_{2n,2n}, \text{ then }\sigma=\{(a+1)^{j'}, a^{n-j'-k'}, (a-1)^{k'}\}\text{ (summand 5)}$, $\{(a+1)^{j'+1}, a^{n-j'-k'-1}, (a-1)^{k'+1}\}\text{ (summand 6)}$, $ \{(a+1)^{j'+2}, a^{n-j'-k'-2}, (a-1)^{k'}\}\text{ (summand 7)}, \{(a+1)^{j'+1}, a^{n-j'-k'}, (a-1)^{k'-1}\}\text{ (summand 8)}$.
\end{enumerate}
Each summand is the number of $X$ with fixed $\sigma$ times the number of all possible fillings in the last two columns given $\sigma $. To simplify the notation afterward, we will reorder the $XB$'s 2 dimensional blocks in nondecreasing order, and make all $z^a$ blocks with both rows affected by z in front of all the unaffected $z^a$ blocks ($x_1 z^{a-1}\text{'s } , x_2 z^a \text{'s } \text{affected}, x_3 z^a\text{'s } \text{affected}, x_4 z^{a+1}\text{'s } $). We denote 
\begin{enumerate}
    \item $A_1=\{X_{2r,2n-1},\text{where } r\leq x_1\},$
    \item $A_2=\{X_{2r-1,2n-1},\text{where } x_1+x_2<r\leq x_3+x_2+x_1\},$
    \item $A_3=\{X_{2r,2n-1},\text{where } x_1+x_2<r\leq x_3+x_2+x_1\},$
    \item $A_4=\{X_{2r-1,2n-1},\text{where } x_1+x_2+x_3<r\leq x_4+x_3+x_2+x_1\},$
\end{enumerate}
\begin{comment}

$A_1=\{X_{2r,2n-1},\text{where } r\leq x_1\}$, $A_2=\{X_{2r-1,2n-1},\text{where } x_1+x_2<r\leq x_3+x_2+x_1\}$, $A_3=\{X_{2r,2n-1},\text{where } x_1+x_2<r\leq x_3+x_2+x_1\}$, $A_4=\{X_{2r-1,2n-1},\text{where } x_1+x_2+x_3<r\leq x_4+x_3+x_2+x_1\}$
\end{comment}

and similarly for $B_1,B_2,B_3,B_4 $ on $X_{r,2n}$.

The following statements are direct results from Proposition~\ref{young}, since these are from the same matrix operations as in case 1,2,3. 

Claim 1: If $X_{2n,2n}=z\text{ and }X_{2n-1,2n-1}=1$, then the last $z^a$ block becomes $z^{a+1}.$ In terms of $B's$, any elements in $k$ can be filled in all entries in $B_1,B_2,B_3,B_4$ (case 1 in Proposition~\ref{young}). 

If $X_{2n,2n}=1\text{ and }X_{2n-1,2n-1}=z$, then:

Claim 2: A $z^a $ block in $[B]$ becomes $z^{a+1}$ (subcase 1 in case 2) and in terms of $B's$, any elements in $k$ can be filled in all entries in $A_1,A_2,A_4$ and there exists at least one nonzero entries in $A_3$ ; 

Claim 3: A $z^{a-1}$ block in $[B]$ becomes $z^{a}$ (subcase 1 in case 2) and in terms of $B's$, any elements in $k$ can be filled in all entries in $A_2,A_4$ and there exists at least one nonzero entries in $A_1$ and all entries in $A_3$ are $0$;

Claim 4: A $z^{a}$ block in $[B]$ becomes $z^{a-1}$ (subcase 2 in case 2) and in terms of $B's$, any elements in $k$ can be filled in all entries in $A_4$ and there exists at least one nonzero entries in $A_2$ and all entries in $A_3,A_1$ are $0$;

Claim 5: The last $z^a$ block  becomes $z^{a-1}$ (subcase 3 in case 2) and in terms of $B's$, any elements in $k$ can be filled in all entries in $A_4$ and all entries in $A_3,A_1,A_2$ are $0$;

If $X_{2n,2n}=z\text{ and }X_{2n-1,2n-1}=z$, then:

Claim 6: A $z^a $ block in $[B]$ becomes $z^{a+1}$ and the last $z^a$ block  becomes $z^{a+1}$ (subcase 1 in case 3) and thus in terms of $B's$, any elements in $k$ can be filled in all entries in $A_1,A_2,A_4,B_1,B_2,B_3,B_4$ and there exists at least one nonzero entries in $A_3$ ; 

Claim 7: A $z^{a-1}$ block in $[B]$ becomes $z^{a}$ and the last $z^a$ block  becomes $z^{a+1}$ (subcase 1 in case 3) and thus in terms of $B's$, any elements in $k$ can be filled in all entries in $A_2,A_4,B_1,B_2,B_3,B_4$ and there exists at least one nonzero entries in $A_1$ and all entries in $A_3$ are $0$;

Claim 8: A $z^{a}$ block in $[B]$ becomes $z^{a-1}$ and the last $z^a$ block  becomes $z^{a+1}$ (subcase 3 in case 3) and in terms of $B's$, any elements in $k$ can be filled in all entries in $A_4,B_1,B_2,B_3,B_4$ and there exists at least one nonzero entries in $A_2-B_3$ and all entries in $A_3,A_1$ are $0$;

Claim 9: The last $z^a$ block  becomes $z^{a}$ (subcase 3 in case 3) and in terms of $B's$, any elements in $k$ can be filled in all entries in $A_4,,B_1,B_2,B_3,B_4$ and all entries in $A_3,A_1,A_2-B_3$ are $0$.

Now, we explain the coefficient for each summand in the equality.

Summand 1: Directly from the $n-1$ blocks with ${i,j,k}$ indices. Since there is no $z$'s on the two columns, there is only one possible filling.

Summand 2: From the $n-1$ blocks with ${i,j,k-1}$ indices, the last block provides one $-1$ (not necessarily on that block.) Equivalently, claim 4 and 5 gives $A_1,A_3=0$, and fill $A_2, A_4$ with $k$.

Summand 3: From the $n-1$ blocks with ${i,j-1,k}$ indices, the last block provides one $+1$ (not necessarily on that block). 
We distinguish two cases below. 
\begin{enumerate}
    \item  $X_{2n-1,2n-1}=z$ : fill all$A_3$ with at least one nonzero element and fill $A_2, A_4, A_1$ with k. This summand becomes $q^{n-i-1}(q^{n-i-j-k}-1)$.
    \item $X_{2n,2n}=z$, fill $B_1,B_2,B_3,B_4$ with $k$ (Claim 1). This summand becomes $q^{2n-k-j-2i}.$
\end{enumerate}

Summand 4: From the $n-1$ blocks with ${i-1,j,k+1}$ indices, the last block takes one $-1$ block to $\pm 1$. Equivalently, by claim 3 we can fill all entries fill all $A_1$ with at least one nonzero element, $A_3=0$, fill $A_2,A_4$ with k. This summand becomes $q^{n-i-k-1}(q^{k+1}-1)$.

Summand 5: From the $n-1$ blocks with ${i-1,j,k}$ indices, the last block provides $\pm 1$. Equivalently, by claim 9 we can fill all entries $A_4,B_1,B_2,B_3,B_4$ with $k$ and $A_3,A_1,A_2-B_3$ with $0$. This summand becomes $q^{2n-2i-k}$.

Summand 6: From the $n-1$ blocks with ${i,j-1,k-1}$ indices, the last block provides  $+1\text{and } -1$. Equivalently, by claim 8 we can fill all entries $A_4,B_1,B_2,B_3,B_4$ with $k$ and fill all$A_2$ with at least one nonzero element in $A_2-B_3$. This summand becomes $q^{2n-2i-k-1}(q^{n-i-j-k+1}-1)$.

Summand 7: From the $n-1$ blocks with ${i,j-2,k}$ indices, the last block provides  $+1,+1$. Equivalently, by claim 6 we can fill all entries $A_3$ with at least one nonzero element and fill $A1,A_2,A_4,B_1,B_2,B_3,B_4$ with $k$.  This summand becomes  $q^{3n-3i-j-k-1}(q^{n-i-j-k+1}-1)$.

Summand 8: From the $n-1$ blocks with ${i-1,j-1,k+1}$ indices, the last block provides  $+1,\pm1$. Equivalently, by claim 7 we can fill all entries $A_1$ with at least one nonzero element and fill $A_2,A_4,B_1,B_2,B_3,B_4$ with $k$. This summand becomes $q^{3n-3i-2k-j-1}(q^{k+1}-1)$.

Next, we group some of these summands to reduce the redundancy.

(2,6): $q^{(n-1)(i+j)-i(i+j+k)} \frac{[n-1]_{i+j+k-1}}{[i]![j]![k]!}q^{j+n-k}[k]$

(3,7): $q^{(n-1)(i+j)-i(i+j+k)} \frac{[n-1]_{i+j+k-1}}{[i]![j]![k]!}(q^{n-i-k}+q^{n-i-j-k}-1)[j]$

(4,5,8): $q^{(n-1)(i+j)-i(i+j+k)} \frac{[n-1]_{i+j+k-1}}{[i]![j]![k]!}(q^{n-i-k}+q^{n-1}+1)[i]$

1: $q^{(n-1)(i+j)-i(i+j+k)} \frac{[n-1]_{i+j+k-1}}{[i]![j]![k]!}[n-i-j-k]$.

From direct computation, we have \[q^{j+n-k}[k]+(q^{n-i-k}+q^{n-i-j-k}-1)[j]+(q^{n-i-k}+q^{n-1}+1)[i]+[n-i-j-k]=q^{i+j}[n].\]
Thus the statement is true for $n.$
\end{proof}
In the following proposition, we have $\nu=\{1^{n_2},0^{n_1}\}$. Equivalently, on $B$, it consists of 1-dim blocks.
Recall that we define $h_{-1^{i+j+k},\{1^{n_2},0^{n_1}\}}^{2^k,1^{n_2+i-j-k}} \text{ as }\bar{h}_1^{\omega_{[1]}}$ where $\omega_{[1]}=(k,j,i,0).$  

\begin{proposition}

\label{compute1}

\[h_{-1^{i+j+k},\{1^{n_2},0^{n_1}\}}^{2^k,1^{n_2+i-j-k}}=q^{\frac{(k-1)k}{2}}(q-1)^k[n_1-i]_{k}{\binom{n_2}{k,j,n_2-k-j}}\binom{n_1} {i}\]
\end{proposition}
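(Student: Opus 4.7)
The plan is to enumerate directly the matrices $X \in \mathcal{A}_{i+j+k}$ for which $\sigma(XB) = \{2^k, 1^{n_2+i-j-k}, 0^{n_1+j-i}\}$, in parallel to the strategy of Proposition~\ref{computen} but exploiting the fact that $B$ is now a diagonal matrix, so $XB$ factors column by column. By Lemma~\ref{decomp_A}, this count equals $h_{-1^{i+j+k},\{1^{n_2},0^{n_1}\}}^{\{2^k,1^{n_2+i-j-k}\}}$, so it remains to stratify the admissible $X$'s and assemble the product of the four factors appearing in the claimed formula.

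First I would classify $X$ by its diagonal pattern $T = \{s : X_{s,s} = z\}$, split as $T_0 = T \cap P_0$ and $T_1 = T \cap P_1$, where $P_0 = \{1,\ldots,n_1\}$ indexes the positions with $B_{s,s} = 1$ and $P_1 = \{n_1+1,\ldots,n\}$ those with $B_{s,s} = z$. The paired tuple profile $\omega_{[1]} = (k,j,i,0)$ forces $|T_0| = i$ and $|T_1| = j+k$, with $k$ of the indices in $T_1$ designated as ``paired'' to $k$ chosen positions in $P_0 \setminus T_0$ (the unpaired indices of $P_0$ with $X_{s,s}=1$). These selections yield, respectively, the factors $\binom{n_1}{i}$, $\binom{n_2}{k,j,n_2-k-j}$, and $[n_1-i]_k$: the last is a $q$-falling factorial recording both the unordered choice of $k$ partners and the ordering of the matching, with the $q$-weight coming from an inversion statistic on the pair indices.

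Next, for each parameterized structure I would analyze the free off-diagonal entries $X_{j,s}\in k$ (with $j<s$, $j\notin T$, $s\in T$) and determine which values yield the target $\lambda$. The key observation is parity-based: an entry $X_{j,s}$ with $j\in P_0\setminus T_0$ and $s\in T_1$ contributes a term $zX_{j,s}$ of odd $z$-valuation to column $s$ of $XB$, which cannot be cleaned by $H$-column operations (whose coefficients live in $k((z^2))$). For a pair $(j_m,s_m)$ this obstruction is exactly what promotes the column from an unpaired $z^1$ to a paired $z^2$ contribution in $\sigma(XB)$, so we must have $X_{j_m,s_m}\neq 0$, giving the factor $(q-1)^k$. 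The remaining non-pair entries in the rectangular block between paired indices are free in $k$ but must be counted with a $q$-weight reflecting which combinations preserve the target $\sigma$; this is where the factor $q^{k(k-1)/2}$ arises, via a weighted enumeration of inversions among the $k$ pair rows.

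The main obstacle is carrying out this parity-based $K\times H$ reduction in the second step: certifying precisely which configurations of the free entries produce exactly $\sigma(XB) = \lambda$, and not a neighbouring double coset, and tracking the inversion statistic that yields the factor $q^{k(k-1)/2}$. This step parallels (but is substantially simpler than) the enumeration of Claims 1--9 in the proof of Proposition~\ref{computen}, because here all blocks of $B$ are one-dimensional, so only the interaction between $P_0$-columns and $P_1$-columns through the parity obstruction needs to be tracked. Once the local reduction is established, multiplying the four factors gives the stated closed formula.
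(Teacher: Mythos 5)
Your proposal takes essentially the same route as the paper's proof: enumerate the coset representatives $X\in\mathcal{A}_{i+j+k}$ from Lemma~\ref{decomp_A}, stratify by the diagonal pattern of $X$ over the $B=1$ and $B=z$ positions, and use the non-removable odd-valuation entries $X_{r,s}z$ (with $B_{r,r}=X_{r,r}=1$ and $s$ a $z$-column over $P_1$) as the pairing mechanism that creates the parts equal to $2$ --- the paper's Steps 1--3 and its per-column weights $(q^{n_1-i}-q^{l-1})q^{k_l-n_1-1-r(k_l)}$, etc., carry out exactly the reduction you defer, and they assemble into $\binom{n_1}{i}\binom{n_2}{k+j}\binom{k+j}{k}\prod_{l=0}^{k-1}(q^{n_1-i}-q^{l})$, which is precisely your product of four factors. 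One small correction to your bookkeeping: a $z$-column over $P_1$ that is \emph{not} paired gives $z\cdot z=z^{2}\equiv 1$ modulo $H=\text{GL}_n(k((z^2)))$, i.e.\ it contributes a part $0$ to $\sigma(XB)$ (these are the $j$ columns with $1\to 0$), not a retained ``unpaired $z^{1}$'' part.
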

\begin{proof}

Let $B$ denote the diagonal matrix  where $B_{l,l}=1 \text{for } l\leq n_1 \text{ and }  B_{l,l}=z \text{ for } n_1<l\leq n_1+n_2$. Step 1: for all $l\text{ such that } X_{l,l}=1,$ do column operations to remove all $X_{l,r}B_{r,r}$. Thus, $Y_{l,r}=0 \text{ if } l\neq r,\text{and }( l,r\leq n_1 \text{or } n_1<l,r\leq n_1+n_2)$. 
Step 2, for all $l \text{ such that } X_{l,l}=z, \text{and } Y_{l,r}\neq 0$, pick the first nonzero $Y_{l,r}$ in the $r$th column, then remove every $Y_{l',r}$ below (i.e., $l'<r$). Then, remove all $Y_{l,p}$ for $p>r$.  We call $(l,r)$ a pair.
Step 3, for all $l \text{ such that } X_{l,l}=z, \text{ and } Y_{l,r}=0$, this row becomes $1$.

On column $l<n_1$, if there is a $z$ action on the column, then $1$ becomes $z $ with weight $q^{l-1-r'(l)}$. On column $n_1<l<n_1+n_2$, with a $z$ action, either a $z^2$ or 1 will occur. We write $J=\{j_1, j_2, ...j_j\}$ to be those columns  with $z\to 1$, $K=\{k_1, ...k_k\}$ to be those columns with $z\to z^2$, $\{i_1,...i_k\}$  to be those columns paired with $k$'s, $I=\{i'_1, i'_2, ...i'_i\}$ to be those columns with $1\to z$,.
A $z\to z^2$ occurs when there exists a $l'<n_1$ column of 1. Given $J,K$, for any l$\in J,K$, we have  $r(l)=|\{k_p<l\}\cup \{j_p<l\}|$. Given $I$, for any l$\in I$, we have $r'(l)=|\{i_p<l\}|$. 

The total weight for $k_l$ is $(q^{n_1-i}-q^{l-1})q^{k_l-n_1-1-r(k_l)}$. (The $q^{l-1}$ term is the weight for $ i's, \text{ where }z\to z^2$). The weight for $j_l$ is $q^{j_l-n_1-1-r(j_l)}q^{|\{ k_{l'}\text{ ,where }k_{l'}<j_l\}|}$.
\begin{align*}
h_{-1^{i+j+k},\{1^{n_2},0^{n_1}\}}^{2^k,1^{n_2+i-j-k}}=&\sum_{I,J,K} \prod_{l} q^{i'_l-1-r'(i'_l)}\prod _{l} (q^{n_1-i}-q^{l-1})q^{k_l-n_1-1-r(k_l)} \prod_{l} q^{j_l-n_1-1-r(j_l)}q^{|\{ k_{l'}\text{ ,where }k_{l'}<j_l\}|}  \\
&=\binom{n_1} {i}\sum_{J,K}\prod _{l} (q^{n_1-i}-q^{l-1})q^{k_l-n_1-1-r(k_l)} \prod_{l} q^{j_l-n_1-1-r(j_l)}q^{|\{ k_{l'}\text{ ,where }k_{l'}<j_l\}|}\\
&= \binom{n_1} {i} \binom{n_2}{k+j} \prod_{l=0}^{k-1} (q^{n_1-i}-q^l) \cdot \sum_{J,K} \prod_l q^{|\{ k_{l'}\text{ ,where }k_{l'}<j_l\}|}\\
\end{align*}
The  last summation is exactly the combinatorial description of $q$-binomial coefficient in terms of inversion. Thus 
\[h_{-1^{i+j+k},\{1^{n_2},0^{n_1}\}}^{2^k,1^{n_2+i-j-k}}= \binom{n_1}{ i} \binom{n_2}{ k+j}  \binom{k+j}{ k}\prod_{l=0}^{k-1} (q^{n_1-i}-q^l).\]By rearranging, the original formula holds.
\end{proof}

The next step is to construct a system of paired semi-tableaux to prove the original theorem, i.e., closing the gap of $\nu_{i+1}-\nu_i >4$. The construction is pure combinatorial.
We will define $ D_{l,\nu}^\lambda $ (the set of all paired diagrams which the original diagrams are $\nu$ and reduced shapes are $\lambda$  with the total amount of added boxes being $l$) and 
$T_\omega$ (the set of all paired semi-tableau from a paired diagram $\omega$) before stating Proposition ~\ref{young}.

For any Young diagram $\nu$ with $m_0(\nu)=n/2\text{ and } m_1(\nu)=0$, we define an approach to add box toward $\nu$. New boxes are added to $\nu$ following the original Pieri rule, i.e. at most $1$ box per row. We call one such added box diagram $\omega$ (We do not reshuffle rows in $\omega$ to make them in decreasing order). We introduce a pairing $(i,j)$ for rows $i,j$: for any i with $\nu_i > 1$, there is a pairing for $\nu_i$ with $\nu_{n-i+1}$, i.e., $(i,n-i+1)$. The total number of added boxes is $\ell$. 
Let $r(\omega)=\{r_1,r_2,...\}$ be the reduced shape of $\omega$: 
$r_i= \omega_i-\omega_j \text{ for pair } (i,j)$. Note that $r(\omega)$ is a partition.
We call all such Young diagrams with added boxes {\em paired diagrams},  with the original diagram $\nu$ and reduced shape $\lambda=r(\omega)$  and  total amount of added boxes being $\ell$. Let $D_{l,\nu}^\lambda $ denote the set containing all such paired diagrams.

Next, a definition of {\em paired semi-tableaux} based on paired diagram $\omega$ is introduced.
For each ADDED box in $\omega$, numbers in $\{1, ..., n\}$ will be filled in. Notice that in the original definition of Young tableaux, all boxes are filled with numbers.
Rules:
\begin{enumerate}
\item Each number can appear in at most one box.
\item The number $2k+1$ can be only filled in the $(k+1)$th row.
\item The number $2k$ can be filled in any row from the $(k+1)$th row to the $n-k+1$th row with the exception: $2k$ cannot be filled in row $j$ if $\nu_{k}=\nu_{n-j+1}\text{ and } \nu_{j}=0$ and the number filled in row $n-j+1$ is greater than $2k$. 
\end{enumerate}

Before we start Proposition~\ref{young}, a definition of weight on the paired semi-tableau $\gamma$ would be necessary.
We first assign a label $(r(\omega)_i, n-i)$  for each row of $\gamma$, and a lexicographic order on labels.
We create a sequence of subtableaux of $\{[\gamma]_1,...,[\gamma]_{\frac{n}{2}}\}$, where $[\gamma]_i$ consists of $[\frac{n}{2}-i+1,\frac{n}{2}+i]$ rows of $\nu$ with all added boxes with filled number $\geq n-2i+1$. Naturally, labels are assigned to rows in $[\gamma]_i$.
Let $S_{(a,b),[\gamma]_i}$ denote the set of all rows in $[\gamma]_i$ with no added boxes in $[\gamma]_i$ and labels being less than $(a,b)$. Here is the definition for weight on $\{1,...,n\}$:
\begin{enumerate}
    \item For $2k+1$ newly filled at $[\gamma]_i$ ($2k+1$ does not appears in $[\gamma]_{i-1}$, that is $i=\frac{n}{2}-k$) , $ wt_{2k+1}(\gamma)=q^{|S_{(\infty,n),[\nu]_i}|}$.
    
\item For $2k$ newly filled at $[\gamma]_i$ at the $j$th row (that is  $i=\frac{n}{2}-k+1$), we write the label for row j as $(a,n-j)$. Then $wt_{2k}(\gamma)=q^{|S_{(a-1,n-j),[\gamma]_i}|-1}(q-1)$ if $j\neq n-k-1$.
\item Under the same condition with 2, if $j= n-k-1$ and $2k-1$ is not in $[\gamma]_i$ , $wt_{2k}(\gamma)=q^{|S_{(a-1,n-j),[\gamma]_i}|}$.
\item  Under the same condition with 2, if $j= n-k-1$ and $2k-1$ is in $[\gamma]_i$, $wt_{2k}(\gamma)=q^{|S_{(a,n-j),[\gamma]_i}|}$.
\item If a number $k$ is not filled in $\gamma$, $wt_{k}(\gamma)=1$. 
\end{enumerate}
Then, we define a weight on tableau $\gamma$: $wt(\gamma)=\prod_{i} wt_{i}(\gamma)$.

Here is an example.

\ytableausetup{centertableaux}
\begin{ytableau}
\none[1]  &  &  &  & \\
\none[2]  &  &  &  &\\
\none[3]  &  &  &  \\
\none[4]  &  &  &  \\
\none[5] & & \\
\none[6] \\
\none[7] \\
\none[8] \\
\none[9] \\
\none[10] \\
\end{ytableau}
\ytableausetup{centertableaux}
\begin{ytableau}
\none[1]  &  &  &  &  &1\\
\none[2]  &  &  &  &\\
\none[3]  &  &  &  \\
\none[4]  &  &  &  &2\\
\none[5] & & &6\\
\none[6] \\
\none[7] \\
\none[8] &4\\
\none[9] \\
\none[10] \\
\end{ytableau}

With pair $\{(1,10),(2,9),(3,8),(4,7)(5,6)\}$, we write  subtableaux sequence of $\gamma$  $\{[\gamma]_1,[\gamma]_2,[\gamma]_3,[\gamma]_4,[\gamma]_5\}.$

\ytableausetup{centertableaux}
\begin{ytableau}
\none[5] & & \\
\none[6] \\
\end{ytableau}
\ytableausetup{centertableaux}
\begin{ytableau}
\none[4]  &  &  & \\
\none[5] & & \\
\none[6] \\
\none[7] \\
\end{ytableau}
\ytableausetup{centertableaux}
\begin{ytableau}
\none[3]  &  &  &  \\
\none[4]  &  &  & \\
\none[5] & & &6\\
\none[6] \\
\none[7] \\
\none[8] \\
\end{ytableau}
\ytableausetup{centertableaux}
\begin{ytableau}
\none[2] &  &  &  &\\
\none[3]  &  &  &  \\
\none[4]  &  &  & \\
\none[5] & & &6\\
\none[6] \\
\none[7] \\
\none[8] &4\\
\none[9] \\
\end{ytableau}
\ytableausetup{centertableaux}
\begin{ytableau}
\none[1]  &  &  &  & &1\\
\none[2]  &  &  &  &\\
\none[3]  &  &  &  \\
\none[4]  &  &  & &2\\
\none[5] & & &6\\
\none[6] \\
\none[7] \\
\none[8] &4\\
\none[9] \\
\none[10] \\
\end{ytableau}

The reduced shape is $\{5,4,4,3,2\}.$

$wt_6=(q-1)q^2$. Labels in $[\gamma]_3: ((3,7),(3,6),(3,5),(-3,4),(-3,3),(-3,2))$, 

$wt_4=(q-1)$. Labels in $ [\gamma]_4: ((4,8),(2,7),(3,6),(3,5),(-3,4),(-3,3),(-2,2),(-4,1)),$

$wt_{2}=(q-1)q^4,wt_{1}=q^6.$ Labels in $[\gamma]_5$: $((5,9),(4,8),(2,7),(4,6),(3,5),(-3,4),(-4,3), $ $(-2,2),(-4,1),(-5,0)).$

The following proposition will give a combinatorial summation for $h_{\ell\nu}^{\lambda}$. Since in Lemma~\ref{decomp_01} we completely separated 1-dimensional blocks and 2-dimensional blocks, $\nu$ has the property $m_0(\nu)=\frac{n}{2}\text{ and } m_1(\nu)=0$. Again, unlike Lemma~\ref{decomp_01},~\ref{decomp_flip} and ~\ref{weak}, we denote B to be the block diagonal matrix with valuations in nondecreasing order and $\sigma(B)=\nu$. Recall from the beginning of this chapter, this is the definition of $\Pi^{\nu}$.
\begin{proposition}
    
\label{young}
  \[h_{-1^\ell\nu}^{\lambda}=\sum_{\omega \in D_{\ell,\nu}^\lambda} \sum_{ \gamma\in T_\omega}wt(\gamma).\] 
\end{proposition}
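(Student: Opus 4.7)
The plan is to count matrices $X \in \mathcal{A}_\ell$ producing a given $\sigma(XB) = \lambda$ and then reorganize this count as a sum over paired semi-tableaux. By Lemma~\ref{decomp_A}, $h_{-1^\ell,\nu}^\lambda$ equals the number (over $k$) of $X \in \mathcal{A}_\ell$ with $XB \in K\Pi^\lambda H$. Each such $X$ is determined by a choice of which $\ell$ diagonal entries equal $z$ (the rest equal $1$) together with entries in $k$ at prescribed off-diagonal positions; these parameters contribute powers of $q$ (for free entries) or $q-1$ (for entries forced to be nonzero) to the count. The target is to reorganize these choices by which paired diagram $\omega \in D_{\ell,\nu}^\lambda$ they realize and by which semi-tableau $\gamma \in T_\omega$ records the sequence of operations.

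First I would reduce $XB$ using the row and column operations developed in the proofs of Lemmas~\ref{decomp_01} and~\ref{decomp_flip}. This reduction shows that the 2-dim block of $B$ associated with a pair $(i,n-i+1)$ has its valuations shifted by $0$, $\pm 1$, or $\pm 2$ in a restricted number of ways, governed by whether $X_{i,i}$ and $X_{n-i+1,n-i+1}$ equal $z$, and by interactions transferred from neighboring 2-dim or 1-dim blocks. Each such shift corresponds to an added box in row $i$ or row $n-i+1$ of the paired diagram $\omega$, and the total number of added boxes equals $\ell$. Reordering the rows of $\omega$ within each pair in decreasing order produces the reduced shape $r(\omega)$, which matches $\lambda$ by construction.

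Next I would define the bijection by recording the mechanism of each increment as a filling of $\gamma \in T_\omega$. Odd labels $2k+1$ mark the rigid ``diagonal'' action of $X_{k+1,k+1}=z$, which can only place its increment in row $k+1$; this matches the rule that $2k+1$ occupies only row $k+1$. Even labels $2k$ mark ``cross'' actions (arising from Steps~4--5 of Lemma~\ref{decomp_01}) that can pass increments between rows via 1-dim blocks or through adjacent 2-dim blocks; their placement range from row $k+1$ to row $n-k+1$ reflects the algebraic reach of these operations, and the extra prohibition when $\nu_k=\nu_{n-j+1}=0$ and the paired row has a larger filled entry captures exactly the interference between pairs of equal valuation that blocks the cross-operation.

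Finally I would show the fiber over each $(\omega,\gamma)$ has size $wt(\gamma)$ by induction on the subtableau sequence $[\gamma]_1 \subset [\gamma]_2 \subset \cdots \subset [\gamma]_{n/2}$. The step from $[\gamma]_{i-1}$ to $[\gamma]_i$ adds rows $n/2-i+1$ and $n/2+i$ and introduces the new labels $n-2i+1$ and $n-2i+2$, and one multiplies by $wt_{n-2i+1}(\gamma)\cdot wt_{n-2i+2}(\gamma)$ to count the additional free-parameter choices in $X$ at these new positions; the base case reduces to Proposition~\ref{computen}. The main obstacle will be matching the three weight rules for $wt_{2k}$ (the second, third, and fourth cases in the weight definition) with the corresponding matrix subcases, particularly distinguishing whether $2k-1$ has already been placed in $[\gamma]_i$ and whether $2k$ lands in the special boundary row $n-k-1$ where the paired block coincides with a new diagonal action. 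This requires a uniform treatment of the subcase analysis that Proposition~\ref{computen} carries out explicitly through its Claims~1--9, with the extra layer of bookkeeping needed to track heterogeneous block valuations via the labels $(r(\omega)_i,n-i)$.
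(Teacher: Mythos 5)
Your overall strategy coincides with the paper's: identify each $X\in\mathcal{A}_\ell$ with a paired diagram $\omega$ plus a filling $\gamma$, and show the fiber over $(\omega,\gamma)$ has size $wt(\gamma)$ by processing the diagonal $z$'s of $X$ in order, one weight factor per $z$. However, there are two concrete problems. First, you propose to carry out the reduction of $XB$ "using the row and column operations developed in the proofs of Lemmas~\ref{decomp_01} and~\ref{decomp_flip}," but those lemmas are proved for the \emph{anti-diagonal} normal form of $B$ (with $B_{i,i}=1$, $B_{i,n-i+1}=z$, $B_{n-i+1,n-i+1}=z^{\nu_i}$), whereas Proposition~\ref{young} explicitly switches to $B=\Pi^\nu$, the block-diagonal matrix with valuations in nondecreasing order. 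The reductions do not transfer verbatim; the paper has to build fresh machinery (Tricks 1--3 on $2\times 2$ and $4\times 4$ block configurations, using left multiplication by $K$ and right multiplication by $\mathrm{GL}_n(k((z^2)))$) adapted to the block-diagonal form. Your appeal to "Steps 4--5 of Lemma~\ref{decomp_01}" for the even labels therefore does not land. (There is also an indexing slip: the number $n-i+1$ is filled when $X_{i,i}=z$, so the odd label $2k+1$ corresponds to $X_{n-2k,n-2k}=z$, not $X_{k+1,k+1}=z$.)

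Second, and more seriously, the step you defer as "the main obstacle" \emph{is} the proof. The entire content of the paper's argument is the case analysis you postpone: for each $i$ with $X_{i,i}=z$ one must determine, from the already-reduced first $i-1$ columns, which block's valuation changes and how many choices of the column vector realize each outcome. This requires (i) splitting into three cases according to $v(B_{i,i})$ and $X_{i+1,i+1}$, (ii) within each, selecting the nonzero entry of the column with the largest label in a bespoke lexicographic order $\zeta(Y)$ (this choice is what makes the map to tableaux well defined and produces the factor $q^{m-1}(q-1)$ versus $q^{m}$), and (iii) verifying that the remaining entries stay free, which is exactly the matching of the three $wt_{2k}$ rules to the matrix subcases. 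None of this is routine bookkeeping, and your proposed base case via Proposition~\ref{computen} inverts the paper's logical order (that proposition is derived as a special case of this decomposition, not an input to it). As written, the proposal is a faithful outline of the right approach with the decisive technical core missing.
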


\begin{proof}
Recall the key property for $X\in \mathcal{A}_\ell$:
Given any i such that $X_{i,i}=z,$ we have $X_{i,j}=0$ for $j>i$ and $X_{j,i}\in k$ for $j<i.$ Given any i with $X_{i,i}=1$, we have $X_{i,j}\in{k} $  for $j>i$ and $X_{j,i}=0$ for $j<i.$ 

We claim that each $X$ can be represented uniquely by a paired semi-tableau. Moreover weight of each paired semi-tableau is the number of all such $X$'s corresponding to the tableau. The algorithm below shows that for any $i$ such that $X_{i,i}=z$, first i by i submatrix of $XB$ (if $i$ is the second row of some two dimensional blocks of $B$)  or first  $i+1$ by $i+1$ submatrix of $XB$ ( if i is the first row of some two dimensional blocks of $B$) has one block turning from $z^r$ to $z^{r-1}\text{ or } z^{r+1}$ comparing to the $(i-2, i-2)$ submatrix (former) or $(i-1,i-1)$ submatrix (latter). This is equivalent to filling $n-i+1$ in to some added box under the restriction of filling boxes. 

For any $i$ such that $X_{i,i}=z$, we take $j$ to be the number of elements in the $i$th column vector that are not necessarily 0. In the algorithm we will present, we call $\tilde{X}_i$ to be ith column vector of the matrix corresponding to $Y$ in the $X$ form. 
Thus there are $q^j$ choices for this column vector. In the algorithm we present below, the current column vector correspond to a matrix in $\text{GL}_{i-1}(k[[z]])$ times the original vector in $X$. Thus, the map $k^j\to k^j$ is bijective. We do not necessarily need the following statement (since the map is bijective), but the algorithm will follow the statement:  All elements that must be 0s (originated from the rule in $X$) still remain to be $0$ under some row operations.

We introduce 3 tricks that are used below. We call left multiplication by matrices in $\text{GL}_n(k[[z]])$ and right multiplication by matrices in $\text{GL}_n(k((z^2)))$ allowed operations.

Trick 1: 
A submatrix of the form $\begin{pmatrix}
1 & z+z^{n_1}b\\
0 & z^{n_1+1}
\end{pmatrix}$ (the column with the 1 only has one nonzero entry, $b\in k$ ,$n_1>1$) can be reduced to $\begin{pmatrix}
1 & z\\
0 & z^{n_1+1}
\end{pmatrix}$  with allowed operations.
If $2|n_1$, right multiply by $\mathbbm{1}_{(1,2)}(-bz^n_1)$; else, left multiply by $\mathbbm{1}_{(1,1)}(1/(1+bz^{n_1-1}))$ and right multiply by $\mathbbm{1}_{(1,1)}(1+bz^{n_1-1})$. Note that this may change the other entries in the same row as the top row of the submatrix.

Trick 2: 
A submatrix of the form $\begin{pmatrix}
1 & z & 0&f\\
0 & z^{n_1} &0 &g\\
0 & 0 & 1&z \\
0 & 0 &0 &z^{n_2}\\
\end{pmatrix}$ (the first two columns only do not have other nonzero entries, $v(f)\geq 0$) can be reduced to $\begin{pmatrix}
1 & z & 0&0\\
0 & z^{n_1} &0 &g\\
0 & 0 & 1&z \\
0 & 0 &0 &z^{n_2}\\
\end{pmatrix}$  with allowed operations.

We left multiply by $\mathbbm{1}_{(1,3)}(-f_{\text{odd}}/z)$, right multiply by $\mathbbm{1}_{(1,3)}(-f_{\text{odd}}/z)$ $\mathbbm{1}_{(1,4)}(-f_{\text{even}})$.
Note that this may change the other entries in the same row as the top row of the submatrix. This trick still holds if the third row is in the form of $z,z^2$, given $v(f)>1.$
\begin{comment}

Trick 3: Remove the term $f$, $v(f)\geq 1$
$\begin{pmatrix}
1 &  0&f\\
0  & 1&z \\
0 &0 &z^{n_2}\\
\end{pmatrix}$ 
left multiply by $\mathbbm{1}_{(1,2)}(-f_{\text{odd}}/z)$, right multiply by $\mathbbm{1}_{(1,2)}(-f_{\text{odd}}/z)$ *$\mathbbm{1}_{(1,3)}(-f_{\text{even}})$. if the (1,1) term is z, the trick is similar.

\end{comment} 

Trick 3: 
A submatrix of the form  $\begin{pmatrix}
1&z&0&g\\
0&z^{a_1}&0&f\\
0&0&1&z\\
0&0&0&z^{n_1}\\
\end{pmatrix}$ (the first two columns do not have other nonzero entries, $v(f)\geq a_1,v(g)\geq 0$) can be reduced to $\begin{pmatrix}
1&z&0&0\\
0&z^{a_1}&0&0\\
0&0&1&z\\
0&0&0&z^{n_1}\\
\end{pmatrix}$,  with allowed operations.
If $2|a_1$, left multiply by $\mathbbm{1}_{(1,3)}(-f_{odd} z^{-a_1})$$\mathbbm{1}_{(2,3)}(-f_{odd}/z)$ right multiply by$\mathbbm{1}_{(2,4)}(-f_{even} z^{-a_1})\mathbbm{1}_{(2,3)}(-f_{odd} z^{-a_1-1}) $, then use trick 2. For $a_1$ being odd, vice versa. Note that this may change the other entries in the same row as the top two rows of the submatrix.

Next we present an algorithm on decomposing $Y=XB$. 
We apply the algorithm in all $i$'s such that $X_{i,i}=z$, in increasing order. Then there are three cases:
$v(B_{i,i})>1$;
$v(B_{i,i})=0 $ and $v(B_{i+1,i+1})>1 $  and $X_{i+1,i+1}=1$;
$v(B_{i,i})=0$ and $v(B_{i+1,i+1})>1 $ and $X_{i+1,i+1}=z$.

Case 1: $v(B_{i,i})>1$.

$\begin{pmatrix}
1&z&0&\cdots&b_1 z^{a_l}\\
0&z^{a_0}&0&\cdots&b_2 z^{a_l}\\
\cdots\\
0&\cdots&\cdots&1&z+b_r z^{a_l}\\
0&\cdots&\cdots&0&z^{a_l+1}\\
\end{pmatrix}$

By trick 1, we remove $b_r z^{a_l}$. 
By trick 2, we remove $b_1 z^{a_l}$. 
By trick 3, we remove $b_2 z^{a_l}$.

At the block corresponding to $i$, $z^{a_l}\to z^{a_l+1} $
Thus, weight of the tableau of this $X$ at $i$ is $q^{i-y}$, where $y$ is the number of rows above $i$ that are permanently 0. On the tableau, this is equivalent to fill $n-i+1$ in the added box of row $\frac{n-i}{2}+1$.  In the notation of lexicographical order, $|S_{(\infty,n),[\gamma]_{i/2}}|$ is the number of all rows $<i$ in $Y$ with possible nonzero entries. Thus, $wt_{n-i+1}=q^{|S_{(\infty,n),[\gamma]_{i/2}}|}$.

Case 2: $v(B_{i,i})=0 $ and $v(B_{i+1,i+1})>1 $  and $X_{i+1,i+1}=1$;

$\zeta_l=
    \begin{cases}
      (-v(Y_{l+1,l+1}),\frac{n-l-1}{2}) , & \text{if } v(Y_{l,l})=0 \text{ and }l\leq 1+i\\
      (v(Y_{l,l}),\frac{l+n}{2}-1), &\text{if } v(Y_{l,l})>0 \text{ and }l\leq i+1\\

    \end{cases}$

We write $\zeta(Y)=(\zeta_1,\zeta_2,...\zeta_{i+1})$ with  lexigraphical order imposed on $\zeta$'s.

There are three subcases based on $Y_{l,i}$: 
1. There exists $ l \text{ such that } Y_{l,i}\neq 0\text{ with } v(B_{l,l})>1$; 
2. For any $l\text{ such that } v(B_{l,l})>1\text{ we have } Y_{l,i}= 0; \text{ There exists }  l \text{ such that } Y_{l,i}\neq 0 \text{ where }B_{l,l}=1,v(B_{l+1,l+1})\leq v(B_{i+1,1+i}).$
3. For any $l\text{ such that } v(B_{l,l})>1 \text{ or } B_{l,l}=1,v(B_{l+1,l+1})\leq B_{i+1,i+1},\text{ we have } Y_{l,i}= 0; $  

Subcase 1: There exists $ l \text{ such that } Y_{l,i}\neq 0\text{ with } v(B_{l,l})>1$.
Among all nonzero $ Y_{2l,i}$,  we take $2l'$ to be the index of the largest $\zeta_{2l}$ with respect to  $ \zeta(Y) $.

$\begin{pmatrix}
1&z&0&0&b_1& b_1 z\\
0&z^{a_1}&0&0&b_2& b_2 z\\
0&0&1&z&b_{2l'-1}& b_{2l'-1} z\\
0&0&0&z^{a_{l'}}&b_{2l'}& b_{2l'} z\\
0&0&0&0&z&z^2 \\
0&0&0&0&0&z^{a_{i+1/2}}\\
\end{pmatrix}$

Remove terms with $b_{2l'-1}$: left multiply by $\mathbbm{1}_{(3,4)}(-b_{2l'-1}/b_{2l'})$, then trick 1.
Remove all $b_{2m-1}$'s (i.e., in this matrix, remove $b_1$): left multiply by $\mathbbm{1}_{(1,4)}(-b_{1}/b_{2l'})$, then trick 2.
Remove all $b_{2l}$'s (i.e., in this matrix, remove $b_2$): left multiply by $\mathbbm{1}_{(2,4)}(-b_{2}/b_{2l'})$, then trick 3 (since we get rid of $b_{2l'-1}$ first, row 1 and 2 are not affected in the columns $5,6$). 
Left multiply by $\mathbbm{1}_{(5,4)}(-z/b_{2l'})$, then trick 2 will take $Y_{4,4}$ to be 0.
Reorder the row of  $z,z^2$ with the row of $b_{2l'}$:  switch row $4,5$ and left multiply by $\mathbbm{1}_{(4,4)}(1/b_{2l'})$. Notice that the current row of $z^{a_{l'}+1}$ is obtained by $z$ times that of the original row. Thus, any terms on the right will be removed to $0$.

Therefore, $z^{a_{l'}}\to z^{a_{l'}+1}$ and the other entries in the corresponding row become permanent 0. On the tableau, this is equivalent to fill $n-i+1$ in the added box of row $n/2-l'+1$.
In the notation of lexicographical order, $S_{(a_{l'},l'+\frac{n}{2}-1),[\gamma]_{i+1/2}}$ consists of all rows in $Y$ above $i$ with possible nonzero entries with $\zeta$ less than $(a_{l'},l'+\frac{n}{2}-1)$ and row $i$.
Thus, $wt_{n-i+1}(\gamma)=q^{|S_{((a_{l'},l'+\frac{n}{2}-1),[\gamma]_{(i+1)/2}}|-1}(q-1)$.

Subcase 2: For any $l\text{ such that } v(B_{l,l})>1,\text{ we have } Y_{l,i}= 0. \text{ There exists }  l \text{ such that } Y_{l,i}\neq 0 \text{ where }B_{l,l}=1,v(Y_{l+1,l+1})\leq v(B_{1+i,1+i}).$
Among all nonzero $ Y_{2l-1,i}$,  we take $2l'-1$ to be the index of the largest $\zeta_{2l-1}$ with respect to  $ \zeta(Y) $.

$\begin{pmatrix}
1&z&0&0&b_{2l'-1}& b_{2l'-1} z\\
0&z^{a_{l'}}&0&0&0&0\\
0&0&1&z&b_{2l-1}& b_{2l-1} z\\
0&0&0&z^{a_{l}}&0 &0\\
0&0&0&0&z&z^2 \\
0&0&0&0&0&z^{a_{i+1/2}}\

\end{pmatrix} $

Remove $b_{2l}$: left multiply by $\mathbbm{1}_{(4,2)}(-z^{a_{l}-a_{l'}}b_{2l-1}/b_{2l'-1})\mathbbm{1}_{(3,1)}(-b_{2l-1}/b_{2l'-1})$, right multiply by$\mathbbm{1}_{(3,1)}(b_{2l-1}/b_{2l'-1})\mathbbm{1}_{(4,2)}(b_{2l-1}/b_{2l'-1})$.
Remove the row $z,z^2$ (we delete row 3,4): left multiply by $\mathbbm{1}_{(2,2)}(b_{2l'-1})\mathbbm{1}_{(2,3)}(z^{a_{l'}-2}b_{2l'-1})\mathbbm{1}_{(4,2)}(z^{a_{i+1/2}-a_{l'}}/b_{2l'-1})\mathbbm{1}_{(3,1)}(z/b_{2l'-1})$, right multiply by $\mathbbm{1}_{(1,3)}(-b_{2l'-1})\mathbbm{1}_{(2,4)}(-b_{2l'-1})\mathbbm{1}_{(4,2)}(-1/b_{2l'-1}) \mathbbm{1}_{(3,1)}(-1/b_{2l'-1}) \mathbbm{1}_{(4,4)}(-1/z^2)(3,4) $. Now, we permute rows and columns correspondingly to make $z^{a_{l'}-1}$ block still in row $2l'-1,2l'$.

Therefore, it takes $z^{a_{l'}}\to z^{a_{l'}-1} $and the corresponding row becomes a permanent 0 row. 
On the tableau, this is equivalent to fill $n-i+1$ in the added box of row $\frac{n}{2}+l'$.
In the notation of lexicographical order, $S_{(-a_{l'},-l'+\frac{n}{2}),[\gamma]_{(i+1)/2}}$ consists of  all rows above $i$ with possible nonzero entries with $\zeta$ less than $(-a_{l'},-l'+\frac{n}{2})$ and row $i $ ($v(B_{(i+1)/2,(i+1)/2})\geq a_{l'}$). Thus, 
$wt_{n-i+1}(\gamma)=q^{|S_{(-a_{l'},-l'+\frac{n}{2}),[\gamma]_{i+1/2}}|-1}(q-1)$.

Subcase 3: For any $l\text{ such that } v(B_{l,l})>1 \text{ or } B_{l,l}=1,v(Y_{l+1,l+1})\leq v(B_{i+1,i+1}),\text{ we have } Y_{l,i}= 0.$  
Notice that this may be different from $(Y_{1,i},Y_{2,i},...,Y_{i-1,i})=(0,0,..,0)$. On the $2l-1$th row of $Y_{2l-1,i}\neq 0 \text{ and } Y_{2l,2l}=zY_{i,i}$:  (since the original $B$ is in increasing order, row $2l$ of $ Y$ is a permanent 0 row).

$\begin{pmatrix}
1&z&b_{2l}& b_{2l} z\\
0&z^{a_{l}+1}&0 &0\\
0&0&z&z^2\\
0&0&0&z^{a_{i+1/2}}\\

\end{pmatrix} $

Remove the row $z,z^2$: left multiply by $\mathbbm{1}_{4,4}(-1)\mathbbm{1}_{4,3}(-z^{a_{2l'}-2})\mathbbm{1}_{2,4}(zb_{2l})$, right multiply by $\mathbbm{1}_{1,3}(-b_{2l})\mathbbm{1}_{2,4}(-b_{2l})\mathbbm{1}_{4,4}(z^{-2})$.  Then switch columns $3,4$.

This takes $z^{a_l}$ to $z^{a_l-1}$ and the corresponding row becomes a permanent 0 row. On the tableau, this is equivalent to fill $n-i+1$ in the added box of row $\frac{n+i-1}{2}$.
In the notation of lexicographical order, $S_{(-a_{i+1/2},\frac{n-i+1}{2}),[\gamma]_{i+1/2}}$ consists of all rows $\leq 1+i$ in $Y$ with possible nonzero entries with $\zeta$ less than $(-a_{i+1/2},\frac{n-i+1}{2})$ (it is equivalent to first rows with $z^{1+a_{i+1/2}}$). Thus, 
$wt_{n-i+1}(\gamma)=q^{|S_{(-a_{i+1/2},\frac{n-i+1}{2}),[\gamma]_{i+1/2}}|}$.
Moreover, it shines half light on the rule why no added box on the first row of $z^{a_l+1}$.

Case 3:
$v(B_{i,i})=0$ and $v(B_{i+1,i+1})>1 $ and $X_{i+1,i+1}=z$.

This will be similar to case 2 combined with case 1, but with some subtle differences in subcase 3. Thus, the definition of $\zeta$ applies here. $X_{l,i+1}$ does not play a role until subcase 3. 

Subcase 1: There exists $ l \text{ such that } Y_{l,i}\neq 0\text{ with } v(B_{l,l})>1$.
It is equivalent to first do subcase 1 of case 2 then do case 1. 
Among all nonzero $ Y_{2l,i}$,  we take $2l'$ to be the index of the largest $\zeta_{2l}$ with respect to  $ \zeta(Y) $.

Therefore, $z^{a_{l'}}\to z^{a_{l'}+1} $,$z^{a_i}\to z^{a_i+1} $ and the corresponding rows become permanent 0 row.
On the tableau, this is equivalent to fill $n-i+1$ in the added box of row $n/2-l'+1$ and n-i in the added box of row $\frac{n-i+1}{2}$.
Similarly to subcase 1 in case 2 and case 1, 
$wt_{n-i+1}(\gamma)=q^{|S_{((a_{l'},l'+\frac{n}{2}-1),[\gamma]_{(i+1)/2}}|-1}(q-1)$; $wt_{n-i}=q^{|S_{(\infty,n),[\gamma]_{i+1/2}}|}$, 
  
Subcase 2: For any $l\text{ such that } v(B_{l,l})>1,\text{ we have } Y_{l,i}= 0. \text{ There exists }  l \text{ such that } Y_{l,i}\neq 0 \text{ where } (B_{l,l}=1,v(Y_{l+1,l+1})< v(B_{1+i,1+i}) ) \text{or} (v(Y_{l+1,l+1})=v(B_{1+i,1+i}) \text{ and } v(B_{l+1,l+1})=v(B_{i+1,i+1})-1).$
Among all nonzero $ Y_{2l-1,i}$,  we take $2l'-1$ to be the index of the largest $\zeta_{2l-1}$ with respect to  $ \zeta(Y) $. (Recall in case 2's subcase 3 had the condition $v(Y_{l+1,l+1})\leq v(B_{1+i,1+i})$).

This will take $z^{a_{l'}}\to z^{a_{l'}-1} , z^{a_i}\to z^{a_i+1},$ and the corresponding rows become permanent 0 row. On the tableau, this is equivalent to fill $n-i+1$ in the added box of row $\frac{n}{2}+l'$ and $n-i$ in the added box of row $\frac{n-i+1}{2}$.
Similarly to subcase 2 in case 2 and case 1, 
$wt_{n-i+1}(\gamma)=q^{|S_{(-a_{l'},-l'+\frac{n}{2}),[\gamma]_{i+1/2}}|-1}(q-1)$; $wt_{n-i}=q^{|S_{(\infty,n),[\gamma]_{i+1/2}}|}$.

Subcase 3: For any $l\text{ such that } v(B_{l,l})>1 \text{ or } B_{l,l}=1,v(Y_{l+1,l+1})\leq v(B_{i+1,i+1}),\text{ we have } Y_{l,i}= 0$ (except the case that $v(Y_{l+1,l+1})=v(B_{l+1,l+1})=v(B_{i+1,i+1})$).

$\begin{pmatrix}
1&z&0&0&0&b_1z^a\\
0&z^{c}&0&0&0&b_2z^a\\
0&0&1&z &b &bz+b_3z^a\\
0&0&0&z^a&0&b_4z^a\\
0&0&0&0&z&z^2 \\
0&0&0&0&0&z^{a+1}\\
\end{pmatrix} 
\rightarrow
\begin{pmatrix}
1&z&0&0&0&0\\
0&z^{a}&0&0&0&b_2z^a\\
0&0&1&z &0 &0\\
0&0&0&z^a&0&(b_4-b)z^a\\
0&0&0&0&z&z^2 \\
0&0&0&0&0&z^{a+1}\\
\end{pmatrix} $ 

By trick 2, $b_1z^a,b_3z^a$ go to 0. This takes $z^a\to z^{a}$ on col 5,6 if $b_2=b_4-b=0$. Also  row $5,6$ are permanent 0 rows.

Take $2l'-1$ to be the index of largest entries in $ \zeta(Y) $ with  $\tilde{X}_{2l',i+1}-Y_{2l'-1,i}\neq 0 \text{ and } Y_{2l',2l'}=z^a $.Recall that $\tilde{X}$ is the corresponding X with current $Y$.
This will take $z^{a_{l'}}\to z^{a_{l'}-1} , z^{a_i}\to z^{a_i+1}$ and the corresponding rows become permanent 0 row.  On the tableau, this is equivalent to fill $n-i+1$ in the added box of row $\frac{n}{2}+l'$ and $n-i $  in the added box of row $\frac{n-i+1}{2}$.
In the notation of lexicographical order, $S_{(-a_{l'},-l'+\frac{n}{2}),[\gamma]_{(i+1)/2}}$ consists of  all rows $<i$ with possible nonzero entries with $\zeta$ less than $(-a_{l'},-l'+\frac{n}{2})$ and row i ($v(B_{(i+1)/2,(i+1)/2})= a_{l'}$). 
In the notation of lexicographical order, $|S_{(\infty,n),[\gamma]_{i+1/2}}|$ is the number of all rows above i+1 with possible nonzero entries. 
Thus, 
$wt_{n-i+1}(\gamma)=q^{|S_{(-a_{l'},-l'+\frac{n}{2}),[\gamma]_{i+1/2}}|-1}(q-1)$; $wt_{n-i}=q^{|S_{(\infty,n),[\gamma]_{i+1/2}}|}$.

If no $l'$ exists, $ z^{a_i}\to z^{a_i}$ and the corresponding two rows both become permanent 0 row. On the tableau, this is equivalent to fill $n-i+1$ in the added box of row $\frac{n+i-1}{2}$ and $n-i$ in the added box of row $\frac{1+n-i}{2}$.
Similarly to the above computation and subcase3 in case 2, 
$wt_{n-i+1}(\gamma)=q^{|S_{(-a_{l'},-l'+\frac{n}{2}),[\gamma]_{i+1/2}}|}$; $wt_{n-i}=q^{|S_{(\infty,n),[\gamma]_{i+1/2}}|}$.
Moreover, it shines the other half light on the rule why no added box on the first row of $z^{a_l+1}$.
\end{proof}

\begin{proof}

[Theorem ~\ref{mainthm}]

Now we have all components to close the gap!

Let $\tilde{\nu}$ denote  $\{(\nu_{1}+5(p-1))^{a_{1}},...,(\nu_{p-2}+10)^{a_{p-2}},(\nu_{p-1}+5)^{a_{p-1}},(\nu_p)^{a_p}\}$ \text{ where } $\nu=\{\nu_1^{a_1},\nu_2^{a_2},..., \nu_p^{a_p}\}$. Given $ \tau \in T_{\omega}, \omega\in D_{l,\nu}^\lambda$, there exists corresponding $ \tau' \in T_{\omega'}, \omega'\in D_{l,\tilde{\nu}}^{\gamma}$  such that $\omega,\omega'$ have the same added box position and $\tau,\tau'$ have the same numbers filled in. A key observation is that $\gamma$ is not necessarily $\tilde{\lambda}$. Let $\Gamma$ denote the set of all possible $\gamma$'s.
From the definition of weight, $wt(\tau)=wt({\tau'})$. 
We call the number of pair $(i,j)$ where both rows have added boxes to be $re(\tau)$, the redundancy of $\tau$.  $h_{l,\nu}^{\lambda}=\sum_{i=0}^{l/2} \sum_{\omega \in D_{l,\nu}^\lambda} \sum_{ \tau\in T_\omega, re(\tau)=i} wt(\tau).$
For any $\gamma$ with $h_{l,\tilde{\nu}}^{\gamma}\neq 0$, all $\tau$ have the same redundancy, denoted by $i$. Thus,
$h_{-1^\ell,\tilde{\nu}}^{{\gamma}}=\sum_{\omega \in D_{l,\tilde{\nu}}^{{\gamma}}} \sum_{ \tau\in T_\omega, re(\tau)=i} wt(\tau)$.
Therefore, $h_{-1^\ell,\nu}^{\lambda}=\sum_{\gamma\in \Gamma} \sum_{\omega \in D_{l,\nu}^\lambda\text{ and } \omega' \in D_{l,\tilde{\nu}}^\gamma } \sum_{ \tau\in T_\omega} wt(\tau)=\sum_{\gamma\in \Gamma} h_{-1^\ell,\tilde{\nu}}^{\gamma}$.  
Recall the definition of $[w]$ and $W_{l,\nu}^{\lambda},$ each $w$ corresponds to one $\omega\in D_{l,\nu}^{\lambda}$, while the equivalent relation is the rearranging order of $\omega$.
\end{proof}

\begin{corollary}
For any partition $\nu=\{{a_1}^{n_1},{a_2}^{n_2},...{a_j}^{n_j}\}$ with distinct parts differing by at least 2 and $t_1=0$, for any $\ell,\lambda$, each equivalence relation in the set $W_{\ell,\nu}^{\lambda}$ has $\omega_{[i]}^{1,0},\omega_{[i]}^{0,1}(1\leq i\leq \ell)$ fixed.
We denote it by $\Omega_{[i]}^{1,0},\Omega_{[i]}^{0,1}$.
For the simplicity of $h$, we further write \[r=\sum_{i<j} n_i n_j+ \sum_{i<j} n_i(\Omega_{[j]}^{0,1}-\Omega_{[j]}^{1,0})+\sum_i n_i \Omega_{[i]}^{0,1};\]
\[M=\frac{1}{2} (\ell-\sum_i \Omega_{[i]}^{1,0}+\Omega_{[i]}^{0,1});\]
\[m_i= n_i-\Omega_{[a_i]}^{1,0}-\Omega_{[a_i]}^{0,1} \text{ for all i. }\]
Then,
\[h_{-1^\ell,\nu}^{\lambda}=q^r\prod_{i=1}^j\binom{n_i}{ \Omega_{[a_i]}^{1,0},\Omega_{[a_i]}^{0,1}} \sum_{e_1+e_2+...+e_j=M}  q^{-1/2 ((\sum_i m_i)-2M )^2} \prod_{i=1}^j \binom{m_i}{ e_i} q^{1/2 ( m_i^2+2e_i(e_i-m_i) )}.\]
\end{corollary}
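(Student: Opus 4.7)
The plan is to specialize Theorem~\ref{mainthm} to this $\nu$ and $\lambda$ and simply rewrite the sum more explicitly. Since $t_1=0$ and $\nu$ has no $1$-parts, the factor $\bar h_1^{\omega_{[1]}}$ is trivial and every term in $r([w])$ involving $t_1$, $\omega_{[1]}$, or the index $i=1$ drops out. What remains is
\[
r([w]) = \sum_{1<i<j} n_i n_j + \sum_{1<i<j} n_i(\omega_{[a_j]}^{0,1}-\omega_{[a_j]}^{1,0}) + \sum_i n_i\,\omega_{[a_i]}^{0,1} -\tfrac{1}{2}\Bigl(\bigl(\sum_i \omega_{[a_i]}^{0,0}-\omega_{[a_i]}^{1,1}\bigr)^2 - \sum_i\bigl((\omega_{[a_i]}^{0,0})^2 + (\omega_{[a_i]}^{1,1})^2\bigr)\Bigr),
\]
together with a product of $\tilde h_{a_i}^{\omega_{[a_i]}}$'s. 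First I would verify that an equivalence class is determined precisely by the quadruple $(\omega_{[a_i]}^{0,0},\omega_{[a_i]}^{1,1},\omega_{[a_i]}^{1,0},\omega_{[a_i]}^{0,1})$ for each $a_i$, and that the last two entries are actually forced by $\lambda$.

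The key observation is that because the distinct parts of $\nu$ differ by at least $2$, the only way to produce a part $a_i+1$ in $\hat\nu$ is via a pair $(r,s)$ with $\nu_r=a_i$ and $(w_r,w_s)=(1,0)$, and similarly $a_i-1$ can only come from $\nu_r=a_i$ with $(w_r,w_s)=(0,1)$. Hence
\[
\Omega_{[a_i]}^{1,0} = m_{a_i+1}(\lambda), \qquad \Omega_{[a_i]}^{0,1} = m_{a_i-1}(\lambda)
\]
are determined by $\lambda$, and $m_i = n_i - \Omega_{[a_i]}^{1,0}-\Omega_{[a_i]}^{0,1} = m_{a_i}(\lambda)$. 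The only remaining freedom in choosing an equivalence class is the integer $e_i := \omega_{[a_i]}^{1,1}$, which then forces $\omega_{[a_i]}^{0,0} = m_i - e_i$. The identity $\ell = \sum_i(2e_i+\Omega_{[a_i]}^{1,0}+\Omega_{[a_i]}^{0,1})$ gives the constraint $\sum_i e_i = M$. So the sum over $[w]\in W_{\ell,\nu}^\lambda/\sim$ becomes a sum over tuples $(e_1,\dots,e_j)$ of nonnegative integers (with $e_i\leq m_i$) summing to $M$.

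Next I would read off the $\tilde h$ factor: by Proposition~\ref{computen}, $\tilde h_{a_i}^{\omega_{[a_i]}} = \binom{n_i}{e_i,\Omega_{[a_i]}^{1,0},\Omega_{[a_i]}^{0,1}}$, which splits as $\binom{n_i}{\Omega_{[a_i]}^{1,0},\Omega_{[a_i]}^{0,1}}\binom{m_i}{e_i}$, so the first factor may be pulled outside the sum. Then I would substitute $\omega_{[a_i]}^{0,0}-\omega_{[a_i]}^{1,1} = m_i - 2e_i$ and $(\omega_{[a_i]}^{0,0})^2+(\omega_{[a_i]}^{1,1})^2 = m_i^2 + 2e_i(e_i-m_i)$ into the quadratic tail of $r([w])$; this immediately produces the exponents $-\tfrac12((\sum_i m_i)-2M)^2$ and $\tfrac12(m_i^2+2e_i(e_i-m_i))$ appearing in the corollary. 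The three non-quadratic terms of $r([w])$ depend only on the fixed data $n_i,\Omega_{[a_i]}^{1,0},\Omega_{[a_i]}^{0,1}$, and so are exactly the $r$ defined in the statement.

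The only real obstacle is bookkeeping: making sure that the index $i=1$ genuinely drops out under the hypothesis $t_1=0$ (i.e.\ $\nu$ has no parts $\leq 1$), and that the splitting of $\binom{n_i}{e_i,\Omega^{1,0},\Omega^{0,1}}$ is consistent with the trinomial convention used in the statement. Once this is set up, the corollary is a direct algebraic rewrite of Theorem~\ref{mainthm}; no further combinatorics is needed.
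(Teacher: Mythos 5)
Your proposal is correct and follows essentially the same route as the paper's proof: with $t_1=0$ the $\bar h_1^{\omega_{[1]}}$ factor and all $t_1$-, $\omega_{[1]}$-terms of $r([w])$ disappear, the $\Omega_{[a_i]}^{1,0},\Omega_{[a_i]}^{0,1}$ are forced across all classes, the multinomial from Proposition~\ref{computen} splits as $\binom{n_i}{\Omega_{[a_i]}^{1,0},\Omega_{[a_i]}^{0,1}}\binom{m_i}{e_i}$, and the sum reindexes over $e_i=\omega_{[a_i]}^{1,1}$ with $\sum_i e_i=M$. One small imprecision: since the hypothesis only requires distinct parts of $\nu$ to differ by at least $2$, a part $a_i+1$ of $\lambda$ can also arise as $a_{i'}-1$ with $a_{i'}=a_i+2$, so $m_{a_i+1}(\lambda)=\Omega_{[a_i]}^{1,0}+\Omega_{[a_{i'}]}^{0,1}$ rather than $\Omega_{[a_i]}^{1,0}$ alone; the individual $\Omega$'s are still determined by $\lambda$, but one must also use $m_{a_i}(\lambda)=n_i-\Omega_{[a_i]}^{1,0}-\Omega_{[a_i]}^{0,1}$ and solve from the extremal parts inward, which does not affect the rest of your argument.
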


\begin{proof}
We have $t_1=0$ if and only if there is no 1-dimensional block. Combining with the condition on gap of size 2, we know that all $\omega_{[i]}^{1,0},\omega_{[i]}^{0,1}$ are the same for all equivalent classes. Thus the first five summands in $r([\omega])$ are the same among all $\omega$. Recall \[\bar{\omega_{[a_i]}}=\binom{n_i}{ \omega_{[a_i]}^{0,1},\omega_{[a_i]}^{1,1},\omega_{[a_i]}^{1,0}}=\binom{n_i } {\omega_{[a_i]}^{0,1},\omega_{[a_i]}^{1,0}}\binom {n_1-\omega_{[a_i]}^{0,1}-\omega_{[a_i]}^{1,0}}{ \omega_{[a_i]}^{1,1}}.\] Thus, summing over all $\omega$ is equivalent to summing over all $\omega_{[a_i]}^{1,1}$. Notice we cannot further simplify the summand over $e_1+e_2+...+e_j=M$ because this is not the $q$-Vandermonde formula $\sum_{t} \binom{m} {k-t}\binom{n} {t}q^{(m-k+t)t}=\binom{m+n}{ k}$.
\end{proof}

Recall in Proposition~\ref{satake} that the ring structure of $\mathcal{H}(G,K)$ is a polynomial ring over $\theta_1,\theta_2,...,\theta_n,\theta_{n}^{-1}.$ Combining with the closed formulas of $h_{-1^\ell,\nu}^\lambda$, any $h_{\mu,\nu}^\lambda$ is theoretically computable. However, due to the complexity nature of $h_{-1^\ell,\nu}^\lambda$ (paired tuple structure), we only give another special case for $h_{\mu,\nu}^\lambda$ in Chapter 4.

\section
{Dual Pieri}
A natural question to ask is what $h_{\mu,\nu}^\lambda$ looks like under the restriction of Dual Pieri rule, i.e., the left multiplying matrix has dominant coweight $\{\ell\}$.
In this chapter, we will study $h_{\{-\ell\},\{a^{n}\}}^{\lambda}$ under the restriction $\ell<a-1$. Theorem~\ref{dualpieri} states: For any $A\in \text{GL}_n ({k}((z)))$ with  $\rho(A)=\{l\}$, any B $\in  \text{GL}_n ({k}((z)))$ with $\sigma(B)=\{a_1,...,a_n\}$, $\sigma(AB)=\{b_1,...,b_n\}$, we have inequalities on $a$'s and $b$'s (Dual Pieri rule):  $a_{i+1}\leq b_i\leq a_{i-1}$. In Theorem~\ref{dual_n}, we make one simple assumption: $n\geq 2 $ over $\nu=\{a^n\}$. Thus $\lambda=\{a+i,a^{n-2},a-j\}$ for $0\leq i,j\leq \ell$ and $i+j\leq \ell$ and $2|\ell-(i+j)$. For other $\lambda$, $h_{\mu,\nu}^\lambda=0.$ 
Theorem~\ref{dual_1} computes $h_{\{-\ell\},\{a^{n}\}}^{\lambda}$ for $n=1.$ In this chapter, we rewrite $p_{\ell}^{i,j}=h_{\{-\ell\},\{a^n\}}^{\{a+i,a^{n-2},a-j\}}$.

\begin{theorem}
\label{dual_n}

    $p_{\ell}^{i,j}=\begin{cases}
    q^{(\ell+i-2)n-\frac{\ell+i+j}{2}+1}[n][n-1](q-1)^2  &i,j\geq 1\text{ and }i+j<\ell\\
    q^{(\ell+i-2)n-\ell+2}[n][n-1](q-1) &i,j\geq 1 \text{ and }i+j=\ell\\
    q^{(2\ell+1)n-(\ell-1)}[n] &j=0 \text{ and }i=\ell\\
    q^{(\ell+i-1)n-\frac{\ell+i}{2}}[n](q-1) &j=0 \text{ and }i<\ell\\
    q^{(\ell-1)n-(\ell-1)}[n] &i=0 \text{ and }j=\ell\\
    q^{(\ell-1)n-\frac{\ell+j}{2}}[n](q-1)&i=0 \text{ and }j<\ell\\
    q^{(\ell-1)n-\frac{\ell}{2}}[n](q-1)&i=j=0\\
    \end{cases}$
There is a symmetry on $i,j$: $p_{\ell}^{i,j}=q^{(i-j)n}p_{\ell}^{j,i}.$
\end{theorem}
\begin{proof}
From  \cite[Chapter~5]{macdonald1998symmetric}, we recall that $\{c_\lambda\}$ forms a $\mathbbm{C}$-basis of $\mathcal{H}(G,K)$. Here is some equalities in $c_\lambda$.
\[c_{\{k-\ell\}}c_{\{-1^k\}}=c_{\{-1^{k-1},k-1-\ell\}}+q^kc_{\{-1^k,k-\ell\}} \text{ for } k<\text{min}(\ell,2n)\]
\[c_{\{-1^{\ell-1}\}}c_{\{-1\}}=[\ell]c_{\{-1^{\ell}\}}+c_{\{-1^{\ell-2},-2\}}.\]
If $\ell>2n$, we notice that $c_{-1^{2n-1},2n-1-\ell}=c_{2n-\ell}$.
For $\ell>2n$, we have \[c_{\{-\ell\}}=c_{\{1-\ell\}}c_{\{-1\}}-q(c_{\{2-\ell\}}c_{\{-1^2\}}-q^2(c_{\{3-\ell\}}c_{\{-1^3\}}-....-q^{2n-1}c_{2n-\ell})).\]
If $\ell\leq 2n$, we have
\[ c_{\{-\ell\}}=c_{\{1-\ell\}}c_{\{-1\}}-q(c_{\{2-\ell\}}c_{\{-1^2\}}-q^2(c_{\{3-\ell\}}c_{\{-1^3\}}-....-q^{\ell-2}(c_{\{-1\}}c_{\{-1^{\ell-1}\}}-[\ell]c_{\{-1^\ell\}}))).\]

From the definition of Hecke module, $c_{\mu}\times d_{\lambda}=\sum_{\nu} h_{\mu\nu}^{\lambda} d_{\nu},$ and the commutativity of $\mathcal{H}(G,K)$, we will prove the equality by induction on $\ell$. The general strategy is: since each $c_{-k}$ acts on $d_{\{a+i,a^{n-2},a-j\}}$,  we write explicitly the summation for $p_\ell^{i,j}$ in terms of $h$ and $p$.

If $\ell\leq 2n$,
\[c_{-\ell}\times d_{\{a+i,a^{n-2},a-j\}}=-[\ell](\prod_{s=1}^{\ell-2}-q^s)c_{\{-1^\ell\}}\times d_{\{a+i,a^{n-2},a-j\}}+\sum_{r=1}^{\ell-1} (\prod_{s=1}^{r-1}-q^s) c_{\{-1^r\}}c_{\{r-\ell\}}\times d_{\{a+i,a^{n-2},a-j\}}\]
\begin{comment}

\[p_{\ell}^{i,j}=-[\ell](\prod_{s=1}^{\ell-2}-q^s)h_{\{-1^\ell\},\{a^n\}}^{\{a+i,a^{n-2},a-j\}}+\sum_{r=1}^{\ell-1} (\prod_{s=1}^{r-1}-q^s) c_{\{-1^r\}}\sum_{|e-i|,|j-f|\leq 1;|e-i|+|f-j|\leq r;e,f\geq 0}p_{\ell-r}^{e,f}\times d_{\{a+e,a^{n-2},a-f\}}\]
\end{comment}

\[p_{\ell}^{i,j}=-[\ell](\prod_{s=1}^{\ell-2}-q^s)h_{\{-1^\ell\},\{a^n\}}^{\{a+i,a^{n-2},a-j\}}+\sum_{r=1}^{\ell-1} (\prod_{s=1}^{r-1}-q^s) \sum_{|e-i|,|j-f|\leq 1;|e-i|+|f-j|\leq r;e,f\geq 0}p_{\ell-r}^{e,f} h_{\{-1^r\},\{a+e,a^{n-2},a-f\}}^{\{a+i,a^{n-2},a-j\}}.\]
If $\ell>2n$, \[p_{\ell}^{i,j}=\sum_{r=1}^{2n-1} (\prod_{s=1}^{r-1}-q^s) \sum_{|e-i|,|j-f|\leq 1;|e-i|+|f-j|\leq r;e,f\geq 0}p_{\ell-r}^{e,f} h_{\{-1^r\},\{a+e,a^{n-2},a-f\}}^{\{a+i,a^{n-2},a-j\}}+(\prod_{s=1}^{2n-1}-q^s)  p_{\ell-2n}^{i,j}.\]

For $ i,j\geq 2$, \[p_{\ell}^{i,j}=\sum_{v\in \{-1,0,1\}}\sum_{r=1}^{\text{ min }(2n,\ell)-1}\sum_{u\in \{-1,0,1\}} h_{\{-1^r\},\{a+u+i,a^{n-2},a-v-j\}}^{\{a+i,a^{n-2},a-j\}}  p_{\ell-r}^{u+i,v+j}(\prod_{s=1}^{r-1}-q^s)+ (\prod_{s=1}^{2n-1}-q^s)  p_{\ell-2n}^{i,j}\] (the last term is only nonzero when $\ell\geq 2n+i+j$). To avoid extensive heavy notation, we denote $A_{1}^r,A_{2}^r,...A_{9}^r$ the summand, 
in the order of the following $h$'s. Explicitly, $A_{1}^r=h_{\{-1^{2r}\},\{a+i,a^{n-2},a-j\}}^{\{a+i,a^{n-2},a-j\}}  p_{\ell-2r}^{i,j}(\prod_{s=1}^{2r-1}-q^s)$, $A_{2}^r=h_{\{-1^{2r+2}\},\{a+i-1,a^{n-2},a-j+1\}}^{\{a+i,a^{n-2},a-j\}}  p_{\ell-2r-2}^{i,j}(\prod_{s=1}^{2r+1}-q^s).$
Therefore, we have 
\begin{enumerate}
    \item \[h_{\{-1^{2r}\},\{a+i,a^{n-2},a-j\}}^{\{a+i,a^{n-2},a-j\}}= {n-2\choose r} q^{(n+2-r)r}+ 2{n-2\choose r-1} q^{-1-r^2+n+rn}+{n-2\choose r-2} q^{(n-r)(r+2)}\] ($A_1^r$ has $r\in [1,\frac{\ell-i-j}{2}]$ if $\ell<2n \text{ or } n> \frac{\ell-i-j}{2}$. $A_1^r$ has $r\in [1,n-1]$ otherwise.);

\item\[h_{\{-1^{2r+2}\},\{a+i-1,a^{n-2},a-j+1\}}^{\{a+i,a^{n-2},a-j\}}= {n-2\choose r} q^{n(3+r)-2r-r^2-3}\]($A_2^r$ has $r\in [0,\frac{\ell-i-j}{2}]$ if $\ell<2n \text{ or } n> \frac{\ell-i-j}{2}$. $A_2^r$ has $r\in [0,n-2]$ otherwise.);

\item\[h_{\{-1^{2r+2}\},\{a+i+1,a^{n-2},a-j-1\}}^{\{a+i,a^{n-2},a-j\}}= {n-2\choose r} q^{n(1+r)-(r+1)^2)}\]($A_3^r$ has $r\in [0,\frac{\ell-i-j}{2}-2]$ if $\ell<2n \text{ or } n> \frac{\ell-i-j}{2}$. $A_3^r$ has $r\in [0,n-2]$ otherwise.); 

\item\[h_{\{-1^{2r+2}\},\{a+i+1,a^{n-2},a-j+1\}}^{\{a+i,a^{n-2},a-j\}}= {n-2\choose r} q^{n(1+r)-(r+1)^2)-1}\]($A_4^r$ has $r\in [0,\frac{\ell-i-j}{2}-1]$ if $\ell<2n \text{ or } n> \frac{\ell-i-j}{2}$. $A_4^r$ has $r\in [0,n-2]$ otherwise.);

\item\[h_{\{-1^{2r+2}\},\{a+i-1,a^{n-2},a-j-1\}}^{\{a+i,a^{n-2},a-j\}}= {n-2\choose r} q^{n(3+r)-(r+1)^2)-1}\]($A_5^r$ has $r\in [0,\frac{\ell-i-j}{2}-1]$ if $\ell<2n \text{ or } n> \frac{\ell-i-j}{2}$. $A_5^r$ has $r\in [0,n-2]$ otherwise.); 

\item\[h_{\{-1^{2r+1}\},\{a+i,a^{n-2},a-j+1\}}^{\{a+i,a^{n-2},a-j\}}= {n-2\choose r} q^{(r+1)n-r^2-1}+{n-2\choose r-1} q^{n(2+r)-(r+1)^2-1}\]($A_6^r$ has $r\in [0,\frac{\ell-i-j}{2}]$ if $\ell<2n \text{ or } n> \frac{\ell-i-j}{2}$. $A_6^r$ has $r\in [0,n-1]$ otherwise.);

\item\[h_{\{-1^{2r+1}\},\{a+i,a^{n-2},a-j-1\}}^{\{a+i,a^{n-2},a-j\}}= {n-2\choose r} q^{(r+1)n-r^2}+{n-2\choose r-1} q^{n(2+r)-(r+1)^2}\]($A_7^r$ has $r\in [0,\frac{\ell-i-j}{2}-1]$ if $\ell<2n \text{ or } n> \frac{\ell-i-j}{2}$. $A_7^r$ has $r\in [0,n-1]$ otherwise.);

\item\[h_{\{-1^{2r+1}\},\{a+i+1,a^{n-2},a-j\}}^{\{a+i,a^{n-2},a-j\}}= {n-2\choose r} q^{rn-r^2}+{n-2\choose r-1} q^{n(1+r)-(r+1)^2}\]($A_8^r$ has $r\in [0,\frac{\ell-i-j}{2}-1]$ if $\ell<2n \text{ or } n> \frac{\ell-i-j}{2}$. $A_8^r$ has $r\in [0,n-1]$ otherwise.);

\item\[h_{\{-1^{2r+1}\},\{a+i-1,a^{n-2},a-j\}}^{\{a+i,a^{n-2},a-j\}}= {n-2\choose r} q^{(r+2)n-r^2-1}+{n-2\choose r-1} q^{n(3+r)-(r+1)^2-1}\]($A_9^r$ has $r\in [0,\frac{\ell-i-j}{2}]$ if $\ell<2n \text{ or } n> \frac{\ell-i-j}{2}$. $A_9^r$ has $r\in [0,n-1]$ otherwise.);
\end{enumerate}

If $ 0\leq r\leq \text{ min }(\frac{\ell-i-j}{2},n-1)$,
\[A_{2}^r+A_{4}^{r-1}=-[n][n-1](q-1)^2q^{n(\ell+i-r-2)-r^2-\frac{\ell+i+j}{2}+1}{n\choose r}-[n][n-1](q-1)^2q^{n(\ell+i-r-1)-(r+1)^2-\frac{\ell+i+j}{2}+1}{n\choose r-1}\]
\[=-A_{6}^{r}.\]
(If $r=n-1$, it follows that $A_{2}^r=0.$) 
For $0\leq r\leq  \text{ min }(\frac{\ell-i-j}{2}-1,n-2)$, $A_{3}^{r-1}+A_{5}^{r}+A_{7}^r=0$. For $1\leq r\leq\text{ min }(\frac{\ell-i-j}{2},n-1)$, $A_{1}^{r}+A_{8}^{r-1}+A_{9}^r=0$. We combine all summands under the assumption $\ell\leq 2n$ or $n> \frac{\ell-i-j}{2}$, the only left over term is $A_{9}^0$. 
We combined all the summands under the assumption $\ell> 2n$ and $n\leq \frac{\ell-i-j}{2}$, thus left over terms are $A_8^{n-1},A_9^0, (\prod_{s=1}^{2n-1}-q^s)  p_{\ell-2n}^{i,j}$. From the induction, $(\prod_{s=1}^{2n-1}-q^s)  p_{\ell-2n}^{i,j}=A_8^{n-1}$. Thus, the left over term is $A_{9}^0$.
$p_{\ell}^{i,j}=p_{\ell-1}^{i-1,j}q^{2n-1}$. 

For $i\geq 2,j=1$, \[p_{\ell}^{i,j}=\sum_{v\in \{-1,0,1\}}\sum_{r=1}^{\text{ min }(2n,\ell)-1}\sum_{u\in \{-1,0,1\}} h_{\{-1^r\},\{a+u+i,a^{n-2},a-v-j\}}^{\{a+i,a^{n-2},a-j\}}  p_{\ell-r}^{u+i,v+j}(\prod_{s=1}^{r-1}-q^s)+ (\prod_{s=1}^{2n-1}-q^s)  p_{\ell-2n}^{i,j}.\] We denote $B_{1}^r,B_{2}^r,...B_{9}^r$ the summands, the same as that of $i,j\geq 2$.
Therefore, we have,
\begin{enumerate}
    \item \begin{multline*}
        h_{\{-1^{2r}\},\{a+i,a^{n-2},a-1\}}^{\{a+i,a^{n-2},a-1\}}= {n-2\choose r} q^{(n+2-r)r}+ 2{n-2\choose r-1} q^{-1-r^2+n+rn}\\+{n-2\choose r-2} q^{(n-r)(r+2)}+{n-2\choose r-1,1} q^{nr-r^2-r}+{n-2\choose r-2,1} q^{(n-r)(r+1)}\end{multline*}

\item \[h_{\{-1^{2r+2}\},\{a+i-1,a^{n-1}\}}^{\{a+i,a^{n-2},a-1\}}= {n-1\choose 1,r} q^{n(2+r)-(r+1)^2}\]

\item \[h_{\{-1^{2r+2}\},\{a+i+1,a^{n-1}\}}^{\{a+i,a^{n-2},a-j\}}= {n-1\choose r,1} q^{nr-r^2-2r}\]

\item \[h_{\{-1^{2r+2}\},\{a+i+1,a^{n-2},a-j-1\}}^{\{a+i,a^{n-2},a-j\}}= {n-2\choose r} q^{n(r+1)-(r+1)^2)}\]

\item \[h_{\{-1^{2r+2}\},\{a+i-1,a^{n-2},a-j-1\}}^{\{a+i,a^{n-2},a-j\}}= {n-2\choose r} q^{n(3+r)-(r+1)^2)-1}\]

\item \[h_{\{-1^{2r+1}\},\{a+i,a^{n-1}\}}^{\{a+i,a^{n-2},a-j\}}= {n-1\choose r,1} q^{rn-r^2}+{n-1\choose 1, r-1} q^{n(1+r)-(r+1)^2+1}\]

\item \[h_{\{-1^{2r+1}\},\{a+i,a^{n-2},a-j-1\}}^{\{a+i,a^{n-2},a-j\}}= {n-2\choose r} q^{(r+1)n-r^2}+{n-2\choose r-1} q^{n(2+r)-(r+1)^2}\]

\item \[h_{\{-1^{2r+1}\},\{a+i+1,a^{n-2},a-j\}}^{\{a+i,a^{n-2},a-j\}}= {n-2\choose r} q^{rn-r^2}+{n-2\choose r-1} q^{n(1+r)-(r+1)^2}+{n-2\choose r-1,1} q^{nr-(r+1)r}\]

\item \[h_{\{-1^{2r+1}\},\{a+i-1,a^{n-2},a-j\}}^{\{a+i,a^{n-2},a-j\}}= {n-2\choose r} q^{(r+2)n-r^2-1}+{n-2\choose r-1} q^{n(3+r)-(r+1)^2-1}+{n-2\choose r-1,1} q^{n(r+2)-(r+1)r-1}.\]
\end{enumerate}
Before computing $p_{\ell}^{i,1}$, one key observation on $h$'s is the symmetry between $(i,1)$ and $(1,i);$ $h_{\{-1^{2r}\},\{a+i,a^{n-2},a-1\}}^{\{a+i,a^{n-2},a-1\}}=h_{\{-1^{2r}\},\{a+1,a^{n-2},a-i\}}^{\{a+1,a^{n-2},a-i\}}$ (all five terms are the same.) After direct computation for all nine $B$s, we observe that $p_{\ell}^{i,1}=q^{(i-1)n}p_{\ell}^{1,i}$.

If $ 0\leq r\leq \text{ min }(\frac{\ell-i-1}{2},n-2)$, $B_{2}^r+B_{3}^{r-1}+B_{6}^{r}=0$. If $ 0\leq r\leq \text{ min }(\frac{\ell-i-1}{2}-1,n-2)$, $B_{4}^{r-1}+B_{5}^{r}+B_{7}^r=0$. If $ 1\leq r\leq \text{ min }(\frac{\ell-i-1}{2},n-1)$, $B_{1}^{r}+B_{8}^{r-1}+B_{9}^r=0$. 
We combine all summands under the assumption $\ell\leq 2n$ or $n> \frac{\ell-i-j}{2}$, and the only left over term is $B_{9}^0$.  We combined all the summands under the assumption $\ell> 2n$ and $n\leq \frac{\ell-i-j}{2}$, and left over terms are $B_8^{n-1},B_9^0, (\prod_{s=1}^{2n-1}-q^s)  p_{\ell-2n}^{i,j}$. From the induction, $(\prod_{s=1}^{2n-1}-q^s)  p_{\ell-2n}^{i,j}=B_8^{n-1}$. Thus, the left over term is $B_{9}^0$.

For $i\geq 2,j=0$, \[p_{\ell}^{i,j}=\sum_{v\in \{-1,0,1\}}\sum_{r=1}^{\text{ min }(2n,\ell)-1}\sum_{u\in \{-1,0,1\}} h_{\{-1^r\},\{a+u+i,a^{n-2},a-v-j\}}^{\{a+i,a^{n-2},a-j\}}  p_{\ell-r}^{u+i,v+j}(\prod_{s=1}^{r-1}-q^s)+ (\prod_{s=1}^{2n-1}-q^s)  p_{\ell-2n}^{i,j}.\]
We denote $C_{1}^r,C_{2}^r,...C_{6}^r$ the summands, the same as before. Therefore, we have
\begin{enumerate}
    \item \[h_{\{-1^{2r}\},\{a+i,a^{n-1}\}}^{\{a+i,a^{n-1}\}}= {n-1\choose r} q^{(n+1-r)r}+ {n-1\choose r-1} q^{(n-r)(1+r)}\]
\item \[h_{\{-1^{2r+2}\},\{a+i-1,a^{n-2},a-1\}}^{\{a+i,a^{n-1}\}}= {n-2\choose r} q^{n(3+r)-(r+1)^2-1}\]

\item \[h_{\{-1^{2r+2}\},\{a+i+1,a^{n-2},a-1\}}^{\{a+i,a^{n-1}\}}= {n-2\choose r} q^{n(r+1)-(r+1)^2}\]

\item \[h_{\{-1^{2r+1}\},\{a+i,a^{n-2},a-1\}}^{\{a+i,a^{n-1}\}}= {n-2\choose r} q^{(r+1)n-r^2}+{n-2\choose r-1} q^{n(2+r)-(r+1)^2}\]
\item \[h_{\{-1^{2r+1}\},\{a+i-1,a^{n-1}\}}^{\{a+i,a^{n-1}\}}= {n-1\choose r} q^{(r+2)n-r^2-r-1}\]

\item \[h_{\{-1^{2r+1}\},\{a+i+1,a^{n-1}\}}^{\{a+i,a^{n-1}\}}= {n-1\choose r} q^{rn-r^2-r}.\]
\end{enumerate}
Before computing $p_{\ell}^{i,0}$, one key observation on $h$'s is the symmetry between $(i,0)$ and $(0,i).$ After direct computation for all six $C$s, we observe that $p_{\ell}^{i,0}=q^{in}p_{\ell}^{0,i}$.

If $ 0\leq r\leq \text{ min }(\frac{\ell-i}{2}-1,n-2)$, $C_{2}^r+C_{3}^{r-1}+C_{4}^{r}=0$. If $ 1\leq r\leq \text{ min }(\frac{\ell-i}{2},n-1)$, $C_{1}^{r}+C_{5}^{r}+C_{6}^{r-1}=0$.
We combine all summands under the assumption $\ell\leq 2n$ or $n> \frac{\ell-i-j}{2}$, and the only left over term is $C_{5}^0$. We combined all the summands under the assumption $\ell> 2n$ and $n\leq \frac{\ell-i-j}{2}$, and left over terms are $C_6^{n-1},C_5^0, (\prod_{s=1}^{2n-1}-q^s)  p_{\ell-2n}^{i,j}$. From the induction, $(\prod_{s=1}^{2n-1}-q^s)  p_{\ell-2n}^{i,j}=C_6^{n-1}$. Thus, the left over term is $C_{5}^0$. $p_{\ell}^{i,0}=p_{\ell-1}^{i-1,0}q^{2n-1}$.

For $i=1,j=0$,
\begin{multline*}
    p_{\ell}^{i,j}=\sum_{v\in \{-1,0,1\}}\sum_{r=1}^{\text{ min }(2n,\ell)-1}\sum_{u\in \{-1,0,1\}} h_{\{-1^r\},\{a+u+i,a^{n-2},a-v-j\}}^{\{a+i,a^{n-2},a-j\}}  p_{\ell-r}^{u+i,v+j}(\prod_{s=1}^{r-1}-q^s) \\-[\ell](\prod_{s=1}^{\ell-2}-q^s)h_{\{-1^\ell\},\{a^n\}}^{\{a+i,a^{n-2},a-j\}}+(\prod_{s=1}^{2n-1}-q^s)  p_{\ell-2n}^{i,j}.
\end{multline*}
Therefore, it follows that, 
\begin{enumerate}
    \item \[h_{\{-1^{2r}\},\{a+1,a^{n-1}\}}^{\{a+1,a^{n-1}\}}= {n-1\choose r} q^{(n+1-r)r}+ {n-1\choose r-1} q^{(n-r)(1+r)}+{n-1\choose r-1,1} q^{(n-r)r}\]

\item \[h_{\{-1^{2r+2}\},\{a^{n-1},a-1\}}^{\{a+1,a^{n-1}\}}= {n-1\choose r,1} q^{n(2+r)-(r+1)^2+1}\]

\item \[h_{\{-1^{2r+2}\},\{a+2,a^{n-2},a-1\}}^{\{a+1,a^{n-1}\}}= {n-2\choose r} q^{n(r+1)-(r+1)^2}\]

\item \[h_{\{-1^{2r+1}\},\{a+1,a^{n-2},a-1\}}^{\{a+1,a^{n-1}\}}= {n-2\choose r} q^{(r+1)n-r^2}+{n-2\choose r-1} q^{n(2+r)-(r+1)^2}+{n-2\choose r-1,1} q^{(1+r)n-r-r^2}\]

\item \[h_{\{-1^{2r+1}\},\{a^{n}\}}^{\{a+1,a^{n-1}\}}= {n\choose r,1} q^{(r+1)n-(r+1)r}\]

\item \[h_{\{-1^{2r+1}\},\{a+2,a^{n-1}\}}^{\{a+1,a^{n-1}\}}= {n-1\choose r} q^{rn-r^2-r}.\]
\end{enumerate}
Instead of computing $p_{\ell}^{i,1}$, one key observation on $h$'s is the symmetry between $(1,0)$ and $(0,1).$ After direct computation for all six terms, we observe that $p_{\ell}^{1,0}=q^np_{\ell}^{0,1}$. Recall the measure of \[K\pi^{\ell}K(=\mu(c_{\ell})=\mu(c_{-\ell}))=q^{(2n-1)(\ell-1)}[2n],\] we can write $\sum_{i,j} p_{\ell}^{i,j}=q^{(2n-1)(\ell-1)}[2n].$ 

We notice that $\sum_{i}p_{\ell}^{i+2m-1,i}=[n]^2q^{(n-1)(\ell-2)+(\frac{\ell-1}{2}+m)n-1}(q-1)$ for $2m-1\neq \ell,-\ell,1,-1$.
$\sum_{i}p_{\ell}^{i+\ell,i}=p_{\ell}^{\ell,0}=[n]q^{(n-1)(\ell-1)+\ell n}$.
$\sum_{i}p_{\ell}^{i-\ell,i}=p_{\ell}^{0,\ell}=[n]q^{(n-1)(\ell-1)}$. 
Combining with the measure of $K\pi^\ell K$, 
we have $p_{\ell}^{1,0}=q^{\ell n-\frac{\ell +1}{2}}[q](q-1)$ and $ p_{\ell}^{0,1}=q^{\ell n-\frac{\ell -1}{2}}[q](q-1)$.

For $i=j=0$, 
\begin{multline*}
    p_{\ell}^{i,j}=\sum_{v\in \{-1,0,1\}}\sum_{r=1}^{\text{ min }(2n,\ell)-1}\sum_{u\in \{-1,0,1\}} h_{\{-1^r\},\{a+u+i,a^{n-2},a-v-j\}}^{\{a+i,a^{n-2},a-j\}}  p_{\ell-r}^{u+i,v+j}(\prod_{s=1}^{r-1}-q^s)\\ -[\ell](\prod_{s=1}^{\ell-2}-q^s)h_{\{-1^\ell\},\{a^n\}}^{\{a+i,a^{n-2},a-j\}}+(\prod_{s=1}^{2n-1}-q^s)  p_{\ell-2n}^{i,j}. 
\end{multline*}
We denote $D_{1}^r,D_{2}^r,D_{3}^r,D_{4}^r$ those summands, the same as before.
It follows that 
\begin{enumerate}
    \item \[h_{\{-1^{2r}\},\{a^{n}\}}^{\{a^{n}\}}= {n\choose r} q^{nr-r^2}\]

\item \[h_{\{-1^{2r+2}\},\{a+1,a^{n-1},a-1\}}^{\{a^{n}\}}= {n-2\choose r} q^{n(1+r)-(r+1)^2}\]

\item \[h_{\{-1^{2r+1}\},\{a+1,a^{n-1}\}}^{\{a^{n}\}}= {n-1\choose r} q^{rn-r^2-r}\]

\item \[h_{\{-1^{2r+1}\},\{a^{n-1},a-1\}}^{\{a^{n}\}}= {n-1\choose r} q^{n(r+1)-r^2-r}.\]
\end{enumerate}
If $ 0\leq r\leq \text{ min }(\frac{\ell}{2}-1,n-1)$, $D_{3}^r=D_4^r$. If $ 1\leq r\leq \text{ min }(\frac{\ell}{2}-2,n-2)$, $D_{1}^{r}+D_{2}^{r}=D_{3}^{r-1}+D_{4}^{r}$ (from the equality $[n-r][r](q-1)+[n]=q^r[n-r]+q^{n-r}[r]$). Thus, we have $D_{4}^{\frac{\ell}{2}-2}+2D_{3}^{\frac{\ell}{2}-1}+D_{1}^{\frac{\ell}{2}}+D_{1}^{\frac{\ell}{2}-1}+D_{2}^{0}=0$.
We combine all summands under the assumption $\ell\leq 2n$ or $n> \frac{\ell-i-j}{2}$, and the only left over term is $D_{4}^0$. $p_{\ell}^{0,0}=p_{\ell-1}^{0,1}q^{n}$.
We combined all the summands under the assumption $\ell> 2n$ and $n\leq \frac{\ell-i-j}{2}$: with $D_{4}^{n-2}+2D_{3}^{n-1}+D_{1}^{n-1}+D_{2}^{0}+(\prod_{s=1}^{2n-1}-q^s)  p_{\ell-2n}^{0,0}=0$, and the left over term is $D_{4}^0$.

For $i=j=1$, $p_{\ell}^{1,1}$ is obtained from: $\sum_{i,j} p_{\ell}^{i,j}=q^{(2n-1)(\ell-1)}[2n].$ The proof is the same as the one for $i=1,j=0$, by evaluating $\sum_{i,j} p_{\ell}^{i,j}$. 
\end{proof}
We now analyze $h_{\{-\ell\},\{a\}}^{a+j}$ , i.e., $n=1$.
\begin{theorem}
\label{dual_1}
$h_{\{-\ell\},\{a\}}^{a+j}=\begin{cases}
   q^{\frac{j+\ell}{2}-1}(q-1) &j=\ell-2,\ell-4,...,2-\ell\\ 
   1 &j=-\ell \\
   q^{\ell} &j=\ell\\
\end{cases}$
\end{theorem}
\begin{proof}
Claim:
For all $ A \in GL_2(k[[z]])$ with $\rho(A)=a$, we have $KAK=\bigsqcup_{X\in\mathcal{A}} KX$ where $\mathcal{A}$ consists of upper triangular matrices $X$ with following properties: 
\begin{enumerate}
    \item $X_{1,1}=z^t,X_{2,2}=z^r$
    \item $X_{1,2}=b_0+b_1 z+b_2z^2+...+b_{r-1}z^{r-1} $  where $b_i\in k$
    \item if $t\neq 0,\text{ then }  b_0\neq 0$
    \item $t+r=\ell$
\end{enumerate}  

The proof of the claim follows the same strategy of Lemma ~\ref{decomp_A}. 

From trick 1 of Proposition~\ref{young} in Chapter 3, we have the following: each $X\in \mathcal{A} \text{ with } X_{2,2}=z^{\frac{\ell+j}{2}},X_{1,1}=z^{\frac{\ell-j}{2}}$, $X\begin{pmatrix}
    1&z\\
    0&z^a\\
\end{pmatrix}$ is in the form of $z^{a+j}$. The number of X in $\mathcal{A}$ with the above condition is $q^{\frac{j+\ell}{2}-1}(q-1)$ if $j\neq -\ell,\ell$, and $h_{\{-\ell\},\{a\}}^{a+\ell}=q^{\ell},h_{\{-\ell\},\{a\}}^{a-\ell}=1$.
\end{proof}

\printbibliography[heading=bibintoc]
\end{document}